\numberwithin{equation}{section}
\def\Argmin{{\rm Argmin}}
\def\eqnok#1{(\ref{#1})}
\newcommand{\tsum}{\textstyle\sum}
\newcommand{\bbe}{\mathbb{E}}
\def\prob{\mathop{\rm Prob}}
\def\Prob{{\hbox{\rm Prob}}}
\def\ub{{\rm ub}}
\def\lb{{\rm lb}}
\def\bmA{{\bm A}}
\def\bmc{{\bm c}}
\def\bmB{{\bm B}}
\def\bmQ{{\bm Q}}
\def\bmb{{\bm b}}
\def\bmp{{\bm p}}
\def\pIter{{\bm q}}
\newcommand{\beq}{\begin{equation}}
\newcommand{\eeq}{\end{equation}}
\newcommand{\beqa}{\begin{eqnarray}}
\newcommand{\eeqa}{\end{eqnarray}}
\newcommand{\beqas}{\begin{eqnarray*}}
\newcommand{\eeqas}{\end{eqnarray*}}
\newtheorem{assumption}{Assumption}
\newcommand{\bbr}{\Bbb{R}}
\newcommand{\nn}{\nonumber}
\def\cV{{\cal V}}
\def\vgap{\vspace*{.1in}}
\title{Complexity of Stochastic Dual Dynamic Programming
\thanks{
This research was partially supported by the NSF grant 1953199 and NIFA grant 2020-67021-31526.}
}
\author{
     Guanghui Lan 
    \thanks{H. Milton Stewart School of Industrial and Systems
    Engineering, Georgia Institute of Technology, Atlanta, GA, 30332.
    (email: {\tt george.lan@isye.gatech.edu}).}
}
\date{Submitted: December 16, 2019; Revised: May 27, 2020, July 31, 2020, November 2, 2021. \\
\vspace{0.1in}\\
{\sl Dedicated to Professor Alexander Shapiro on the occasion of his 70th birthday for his profound contributions to stochastic optimization.}}
\begin{document}

\maketitle

\begin{abstract}
Stochastic dual dynamic programming is
a cutting plane type algorithm for multi-stage stochastic optimization originated about 30 years ago. 
In spite of its popularity in practice, there does not
exist any analysis on the convergence rates of this method. In
this paper, we first establish the number of iterations, i.e., iteration complexity, required by  
a basic dual dynamic programming method for solving single-scenario multi-stage optimization problems,
by introducing novel mathematical tools including the saturation of search points.
We then refine these basic tools and   
establish the iteration complexity for
an explorative dual dynamic programing method proposed herein and
the classic stochastic dual dynamic programming method for solving more
general multi-stage stochastic optimization problems under the standard stage-wise
independence assumption. Our results indicate that the complexity of some deterministic variants of these methods 
mildly increases with the number of stages $T$, in fact linearly dependent on $T$
for discounted problems. Therefore, they are efficient for strategic decision making which
involves a large number of stages, 
but with a relatively small number of 
decision variables in each stage. Without explicitly
discretizing the state and action spaces, these methods might  also be 
pertinent to the related reinforcement learning and stochastic control areas.

\end{abstract}

\section{Introduction} \label{sec_intro}

In this paper, we are interested in solving the following stochastic
dynamic optimization problem
\begin{equation} \label{multstage}
\min_{
x_1 \in X_1
}
H_1(x_1,\bmc_1)
+ \lambda \bbe\left[
\min_{ x_2 \in X_2(x_1)} H_2(x_2, \bmc_2)
+\lambda \bbe\big [
\cdots +
\lambda \bbe[
\min_{
x_T \in X_T(x_{T-1})
} H_T(x_T, \bmc_T)
]
\big]
\right],
\end{equation}
with feasible sets $X_t$  given by
\begin{align}
X_1 &:= \left\{
x \in \bar X_1 \subseteq \bbr^{n_1}: \bmA_1 x_1 = \bmb_1, \Phi_1(x_1, \bmp_1) \le 0
\right\}, \\
X_t(x_{t-1}) &\equiv X_t(x_{t-1},\xi_t) \nn\\
&:= \left\{ x \in \bar X_t \subseteq \bbr^{n_t}: \bmA_t x = \bmB_t x_{t-1} + \bmb_t, \Phi_t(x, \bmp_t) \le \bmQ_t x_{t-1}\right\}.
\end{align}
Here $T$ denotes the number of stages, $H_t(\cdot,\bmc_t)$ are closed convex objective functions, $\bar X_t \subset \bbr^{n_t}$ 
are closed convex  sets, 
$\lambda \in (0,1]$ denotes
the discounting factor, $\bmA_t: \bbr^{n_t} \to \bbr^{m_t}, \bmB_t: \bbr^{n_{t-1}} \to \bbr^{m_t}$, and $\bmQ_t: \bbr^{n_{t-1}} \to \bbr^{p_t}$ are linear mappings, 
and $\Phi_{t,i}(\cdot, \bmp_t): \bbr^{n_t} \to  \bbr$, $i =1, \ldots, p_t$
are closed convex constraint functions. Moreover, $\xi_1 := (\bmA_1,\bmb_1,\bmB_1, \bmp_1,\bmc_1)$ is a given deterministic vector, and
$\xi_t := (\bmA_t,\bmb_t,\bmB_t, \bmQ_t, \bmp_t, \bmc_t)$, $t=2, \ldots, T$, are the random vectors
at stage $t$. 
In particular, if $H_t$ are affine, $X_t$ are polyhedral and $\Phi_t$ do not exist, then problem~\eqnok{multstage}
reduces to the well-known multi-stage stochastic linear programming problem (see, e.g., \cite{BirLou97,ShDeRu09}).
The incorporation of the nonlinear (but convex) objective functions $H_t$  and constraints $\Phi_t$
allows us to model a much wider class of problems. 

In spite of its wide applicability, multi-stage stochastic optimization remains highly challenging to solve.
As shown by Nemirovski and Shapiro~\cite{ShaNem04} and
Shapiro~\cite{sha06}, the number of scenarios of $\xi_t$, $t = 2, \ldots, T$, required 
to solve problem \eqnok{multstage} has to increase exponentially with $T$.
In particular, if the number of stages $T=3$, the total number of samples (a.k.a. scenarios)
should be of order ${\cal O}(1/\epsilon^4)$ in general. 
There exist many algorithms for solving multi-stage stochastic optimization problems (e.g.,~\cite{pereira1991multi,rockafellar1991scenarios,HigSen91-1}),
but quite often without guarantees provided on their rate of convergence.
More recently, Lan and Zhou~\cite{LanZhou17-1} developed
a dynamic stochastic approximation method for multi-stage
stochastic optimization by generalizing
stochastic gradient descent methods,
and show that this algorithm can achieve this optimal sampling and iteration complexity bound
for solving general multi-stage stochastic optimization problems with $T= 3$. 
The complexity of this method depends mildly on the problem dimensions, but increases exponentially 
with respect to $T$. As a result, this type of method is suggested for
solving some operational decision-making problems, which involve
a large number of decision variables but only a small number of stages.

In practice, we often encounter strategic decision making problems which span a long horizon and thus
require a large number of stages $T$.  In this situation, a crucial simplification that has been explored 
to solve problem~\eqnok{multstage} more efficiently is to assume the stage-wise independence.
In other words, we make the assumption that the random variables $\xi_t$, $t = 2, \ldots, T$, are
mutually independent of each other.
Under this assumption, we can write
problem~\eqnok{multstage} equivalently as
\begin{equation} \label{multstage1}
\begin{array}{ll}
\min_{x_1 \in X_1} \{ H_1(x_1,\bmc_1)+ \lambda V_{2}(x_1)  \},
\end{array}
\end{equation}
where the value factions $V_t$, $t = 2, \ldots, T$,  are recursively defined by
\begin{equation}\label{Defi_sto_V_m}
\begin{array}{lll}
 V_t(x_{t-1}) &:=& \bbe [\cV_t(x_{t-1},\xi_t)], \\
\cV_t(x_{t-1},\xi_t) &:= & \min_{x_t \in X_t(x_{t-1})} \{ H_t(x_t,\bmc_t)+ \lambda V_{t+1}(x_t)\},
 \end{array}
 \end{equation}
 and
\begin{equation}\label{Defi_sto_V1_m}
 \begin{array}{lll}
  V_{T+1}(x_{T}) = 0.
\end{array}
\end{equation}
Furthermore, as pointed out by Shapiro~\cite{Sha11}, one can generate a relatively small (i.e., $N_t$)
number of samples for each $\xi_t$ and define the so-called sample average approximation (SAA) problem
by replacing the expectation in~\eqnok{Defi_sto_V_m} with the average over the generated samples (see
Section~\ref{sec_DDDP} for more details).

Under the aforementioned stage-wise independence assumption, a widely-used method for solving
the SAA problem is the stochastic dual dynamic programming (SDDP) algorithm. SDDP is
an approximate cutting plane method, first presented by Birge~\cite{Birge85-1} and Pereira and Pinto \cite{pereira1991multi} and later studied
by Shapiro~\cite{Sha11}, Philpott et. al.~\cite{phil13-1}, Donohue and Birge~\cite{donohue2006abridged}, Hindsberger~\cite{hindsberger2014resa},
Kozm\'{i}k and Morton~\cite{kozmik2015evaluating}, 
Guigues~\cite{guigues2018inexact} and Zou et. al.~\cite{ZouAhmedSun19-1}, among many others.
SDDP has been applied to solve problems arising from many different fields such as hydro-thermal planning
\cite{Guigues14-1,ZouAhmedSun19-1} and bio-chemical process control \cite{BaoTong19-1}.
Each iteration of this algorithm contains two phases. In the forward phase, feasible solutions at each stage
will be generated starting from
the first stage based on the cutting plane models for the value functions built in the previous iteration.
Then in the backward phase, the cutting plane models for the value functions of each stage will be updated starting
from the last stage. 
While the cost per iteration of the SDDP method only linearly depends on the number of stages,
it remains unknown what is the number of iterations required by the SDDP method 
to achieve a certain accurate solution of problem~\eqnok{multstage1}. Existing
proofs of convergence  of  SDDP are based on the assumption that  the procedure passes 
through every possible scenario many times~\cite{Sha11,LinPhyi05-1,GirLecPhi15}. 
Of course when the number of scenarios, although finite, is astronomically large this is not very realistic.
In addition, such analysis does not reveal the dependence of the efficiency
of SDDP on various parameters, e.g., number of stages, target accuracy, Lipschitz constants, and
diameter of feasible sets etc.

 It is well-known that when the number of stages $T = 2$, SDDP reduces 
 to the classic Kelley's cutting plane method~\cite{Kelley60}.
As shown in Nesterov~\cite{Nest04}, 
the number of iterations required by Kelley's cutting plane method could  
depend exponentially on the dimension of the problem  even for a static optimization problem inevitably.
Therefore, this type of method is not recommended for solving large-scale
optimization problems. 
%
However, it turns out that the global
cutting plane models are critically important for multi-stage optimization especially if
the number of stages is large and one does not know the structure of optimal policies. 
In these cases we need to understand the efficiency of these cutting plane methods
in order to identify not only problem classes amenable for these techniques, but also possibly
to inspire new ideas to solve these problems more efficiently.
%

This paper intends to close the aforementioned gap in our understanding about cutting plane methods
for multi-stage stochastic optimization. Our main contributions mainly exist in the following several
aspects. Firstly, we start with a  dual dynamic programming (DDP) method
for solving dynamic convex optimization problem
with a single scenario. This simplification allows us to build a few essential
mathematical notions and tools for the analysis of cutting plane methods.
More specifically, we introduce the notion of saturated
and distinguishable search points. Using this notion,
we show that each iteration of DDP will either find a new saturated
and distinguishable search point, or compute an approximate 
solution for the original problem. As a consequence, we establish
the total number of iterations required by the DDP method
for solving the single-scenario problem. More specifically, we show that the
iteration complexity of DDP only mildly increases w.r.t. the number of stages $T$,
in fact linearly dependent on $T$ for many problems, especially
those with a discounting factor $\lambda < 1$.
The dependence of DDP on other problem parameters has also been thoroughly studied.
We also demonstrate that one can terminate DDP based on some
easily computable upper and lower bounds on the optimal value.

Secondly, motivated by the analysis of the DDP method, we propose
a new explorative dual dynamic programming (EDDP) for solving the SAA problem of
multi-stage stochastic optimization in \eqnok{multstage1}. 
When solving the SAA problem, we have to choose one
out of $N_t$ possible feasible solutions in the forward
phase, and each one of them corresponds
to a random realization of $\xi_t$. In EDDP, we choose
a feasible solution in an aggressive manner by selecting the most
distinguishable search point among the saturated ones in each stage.
As a result, we show that the number of iterations
required by EDDP for solving the SAA problem  is the same as that of DDP 
for solving the single-scenario problem. However,
to implement EDDP we need to maintain the set of saturated
search points explicitly.

Thirdly, we show that the SDDP method can be viewed as a randomized
version of the EDDP algorithm by choosing the aforementioned
feasible solution at each stage $t$ randomly from the $N_t$ possible 
selections. Since this algorithm is stochastic, we establish
the expected number of iterations required by SDDP
to compute an approximate feasible policy for solving the
SAA problem.
 In particular the iteration complexity of SDDP is worse than
 that of DDP and EDDP by a factor of $\bar N := N_2 \times N_3 \ldots N_{T-1}$,
 which increases exponentially w.r.t. $T$.
However, it may 
still have mild dependence on $T$ for the low accuracy region (see Section 5 for more discussions).
 Moreover, we show that the probability of
 having large deviation from this expected iteration
 complexity decays exponentially fast.
 In addition, we establish the convergence
 of the gap between a stochastic upper bound and 
 lower bound on the optimal value, and show how we can possibly use
 these bounds to terminate the algorithm.

To the best of our knowledge, all the aforementioned
complexity results, as well as the analysis techniques, are
new for cutting plane methods 
for multi-stage stochastic optimization.

This paper is organized as follows. In Section 2, we present some preliminary results
on the basic cutting plane methods for solving static convex optimization problems.
In Section 3, we present the DDP method for single-scenario problems and establish
its convergence properties. Section 4 is devoted to the EDDP method
for solving the SAA problem for multi-stage stochastic optimization.
In Section 5, we establish the complexity of the SDDP method.
Finally, some concluding remarks are made in Section~6.

\vgap

\section{Preliminary: Kelley's cutting plane methods}

In this section, we briefly review the basic cutting plane method and
establish its complexity bound.
Consider the convex programming problem of
\beq \label{cp}
\min_{x \in X} f(x),
\eeq
where $X \subseteq \bbr^n$ is a convex compact set and $f: X \to \bbr$ is a sub-differentiable convex function.
Moreover, we assume that $f$ is Lipschitz continuous s.t.
\beq \label{cp_lipschitz}
|f(x) - f(y)| \le M \|x - y\|, \forall x, y \in X.
\eeq

Algorithm~\ref{basic_cpm} formally
describes Kelley's cutting plane method for solving \eqnok{cp}.
The essential construct in this algorithm is the cutting plane model $\underline f(x)$,
which always underestimates $f(x)$ for any $x \in X$.
Given the current search point $x_k$, 
this method first updates the model function $\underline f$ and then minimizes
it to compute the new search point $x_{k+1}$.
It terminates if the
gap between the upper bound ($\ub_k$) and lower bound ($\lb_k$)  
falls within the prescribed target accuracy $\epsilon$. As a result, 
an $\epsilon$-solution $\bar x \in X$ s.t. $f(\bar x) - f(x^*) \le \epsilon$
will be found whenever the algorithm stops.

\begin{algorithm}[H]
\caption{Basic cutting plane method}
\begin{algorithmic}
\State {\bf Input:} initial points $x_1$ and target accuracy $\epsilon$.
\State Set $\underline f_0(x) = -\infty$ and $\ub_0 = + \infty$.
\For {$k =1,2,\ldots,$}
\State Set $\underline f_k(x) = \max\{ \underline f_{k-1}(x), f(x_k) + \langle f'(x_k), x - x_k \rangle\}$.
\State Set $x_{k+1} \in \Argmin_{x \in X} \underline f(x)$.
\State Set $\lb_k = \underline f(x_{k+1})$ and $\ub_k = \min\{ \ub_{k-1}, f(x_{k+1})\}$.
\If {$\ub_k - \lb_k \le \epsilon$}
\State {\bf terminate}.
\EndIf
\EndFor
\end{algorithmic} \label{basic_cpm}
\end{algorithm}

We establish the complexity, i.e., the number of iterations required to have a gap lower than $\epsilon$, 
of the cutting plane method in Proposition~\ref{prop:cuttingplane}.

\begin{proposition} \label{prop:cuttingplane}
Unless Algorithm~\ref{basic_cpm} stops, we have $\|x_{k+1} - x_i\| \ge \epsilon /M$ for any 
$i = 1, \ldots, k$. Moreover, suppose that the norm $\|\cdot\|$ in \eqnok{cp_lipschitz}
is given by the $l_\infty$ norm and $X \subset \bbr^n$ is contained
in a box with side length bounded by $l$.
Then the complexity of the basic cutting plane method can be bounded by
\beq \label{bnd_cuttingplane}
\left( \tfrac{l M}{\epsilon} +1\right)^n.
\eeq
\end{proposition}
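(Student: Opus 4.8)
The plan is to prove the statement in two stages: a separation estimate showing that each new search point $x_{k+1}$ must stay far from all previously generated points, and then a volume‑packing count turning that separation into the bound~\eqnok{bnd_cuttingplane}.

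First I would fix an iteration $k$ at which Algorithm~\ref{basic_cpm} has not terminated, so that $\ub_k - \lb_k > \epsilon$. Two elementary observations about the iterates do the work. On the one hand, unrolling the recursion $\ub_k = \min\{\ub_{k-1}, f(x_{k+1})\}$ gives $\ub_k \le f(x_i)$ for every previously generated search point $x_i$. On the other hand, the model $\underline f_k$ retains the cut built at $x_i$, so
\[
\lb_k = \underline f_k(x_{k+1}) \ge f(x_i) + \langle f'(x_i), x_{k+1} - x_i \rangle .
\]
Subtracting the two displays yields $\epsilon < \ub_k - \lb_k \le -\langle f'(x_i), x_{k+1} - x_i \rangle$. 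Since $f$ is $M$‑Lipschitz with respect to $\|\cdot\|$, every subgradient $f'(x_i)$ has dual norm at most $M$, so $-\langle f'(x_i), x_{k+1} - x_i \rangle \le M \|x_{k+1} - x_i\|$ and therefore $\|x_{k+1} - x_i\| > \epsilon / M$. (For the initial index $i = 1$ one either reads $\ub_0$ as already incorporating $f(x_1)$, or simply drops that index; it is not needed for the counting bound below.)

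Next I would iterate this separation estimate: as long as the algorithm has run for $N$ iterations without stopping, the search points $x_1, x_2, \ldots, x_{N+1}$ are pairwise at distance more than $\epsilon / M$ in the norm $\|\cdot\|$. Specializing to $\|\cdot\| = l_\infty$ and using that $X$ lies in a box of side length $l$, I would place around each search point an $l_\infty$‑ball of radius $\epsilon/(2M)$; by the separation these balls have pairwise disjoint interiors, each has volume $(\epsilon/M)^n$, and all of them lie inside the box of side length $l + \epsilon/M$ obtained by enlarging the original box by $\epsilon/(2M)$ in each coordinate direction. Comparing volumes bounds the number of search points — and hence the number of iterations — by $(l + \epsilon/M)^n / (\epsilon/M)^n = (lM/\epsilon + 1)^n$, which is precisely~\eqnok{bnd_cuttingplane}.

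The argument is short; the one point requiring care is getting the sharp radius $\epsilon/M$ (rather than $\epsilon/(2M)$) in the separation estimate, which is why one must bracket $\ub_k$ and $\lb_k$ using $\ub_k \le f(x_i)$ — the genuine best objective value found so far — instead of the cruder $\ub_k \le f(x_{k+1})$, together with the subgradient‑norm bound $\|f'(x_i)\|_* \le M$ coming from Lipschitz continuity. Everything else is the standard disjoint‑balls packing count inside the box.
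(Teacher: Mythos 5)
Your proposal is correct and follows essentially the same route as the paper: you establish the separation $\|x_{k+1}-x_i\| > \epsilon/M$ from $\ub_k \le f(x_i)$ together with the cut at $x_i$ and the bound $\|f'(x_i)\|_* \le M$ (the paper phrases this as Lipschitz continuity of the model $\underline f_k$, which is the same fact), and then count separated points in the box. Your explicit packing argument simply fills in the counting step that the paper states follows immediately, and your handling of the index $i=1$ is a fair fix of a detail the paper also glosses over.
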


\begin{proof}
Note that $\underline f_k(x) = \max_{i=1, \ldots,k} f(x_i) + \langle f'(x_i), x - x_i \rangle$
is Lipschitz continuous with constant $M$. Moreover, we have $\underline f_k(x) \le f(x)$ for any $x \in X$ and
$f(x_i) = \underline f_k(x_i)$ for any $i = 1, \ldots, k+1$. Hence,
\[
\underline f_k(x_{k+1}) = \min_{x \in X} \underline f_k(x) \le \min_{x \in X} f(x) = f^*.
\]
Using this observation, we have
\[
\ub_k - \lb_k \le f(x_i) - \lb_k  = \underline f_k(x_i) - \lb_k = \underline f_k(x_i) -\underline f_k(x_{k+1}) \le M \|x_i - x_{k+1}\|.
\]
Since $\ub_k - \lb_k > \epsilon$, we must have $\|x_i - x_{k+1}\| > \epsilon / M$.
\eqnok{bnd_cuttingplane} then follows immediately from this observation.
\end{proof}

\vgap

Even though the complexity bound~\eqnok{bnd_cuttingplane}
of the cutting plane method has not been explicitly 
established before, construction of this proof was used in Ruszczy\'{n}ski
\cite{Ruz03-1}. Moreover,
as pointed out in \cite{Nest04}  the exponential dependence of such complexity bound
on the dimension $n$ does not seem to be improvable in general.
It is worth noting that the cutting plane algorithm does not explicitly depend on the selection of the norm
even though the bound in \eqnok{bnd_cuttingplane} is obtained
under the assumption that $X$ sits inside an $l_\infty$ box.

\section{Dual dynamic programming for single-scenario problems}
In this section, we focus on a dynamic version
of the cutting plane method applied to solve
a class of deterministic dynamic convex optimization problems,
i.e., multi-stage optimization problems with a single scenario.
This dual dynamic programming (DDP) method, which can be viewed as
SDDP with one scenario, will serve as a starting point
for studying the more general dual dynamic programming methods
in later two sections. Moreover, this method may inspire some interests in its own right.

More specifically, 
we consider 
the following dynamic convex programming
\beq \label{dcp}
f^*:= \min_{x_1 \in X_1} \left\{ f_1(x_1) := h_1(x_1) + \lambda v_2(x_1)  \right\},
\eeq
where the value functions $v_t(\cdot)$, $t =2, \ldots, T+1$, are defined recursively by
\begin{align} 
v_t(x_{t-1}) &:= \min_{x_{t} \in X_t(x_{t-1})} \left\{f_t(x_t) := h_t(x_{t}) + \lambda v_{t+1}(x_{t})\right\}, \label{define_value_function}\\
v_{T+1}(x_T) &\equiv 0, \label{define_value_function1}
\end{align}
with convex feasible sets $X_t(x_{t-1})$ given by
\beq \label{dcp_feasiblesets}
X_t(x_{t-1}) := \left\{ x \in \bar X_t \subseteq \bbr^{n_t}: A_t x = B_t x_{t-1} + b_t, \phi_t(x) \le Q_t x_{t-1}\right\}.
\eeq
Similarly to problem~\eqnok{multstage}, here $\bar X_t \subset \bbr^{n_t}$ are closed convex  sets independent of $x_{t-1}$,
$\lambda \in (0,1]$ denotes
the discounting factor, $A_t: \bbr^{n_t} \to \bbr^{m_t}, B_t: \bbr^{n_{t-1}} \to \bbr^{m_t}$, and $Q_t: \bbr^{n_{t-1}} \to \bbr^{p_t}$ are linear mappings, 
 and $h_t: \bar X_t \to  \bbr$ and $\phi_{t,i}: \bar X_t \to  \bbr$, $i =1, \ldots, p_t$,
are closed convex functions.
Thus, we can view problem~\eqnok{dcp} as a single-scenario multi-stage optimization problem
in the form of~\eqnok{multstage}, by assuming $\xi_t =(\bmA_t,\bmb_t,\bmB_t, \bmQ_t, \bmp_t, \bmc_t)$
to be deterministic, and setting $h_t (\cdot) = H_t(\cdot, \bmc_t)$ and $\phi_t(\cdot) = \Phi_t(\cdot, \bmp_t)$. 

Throughout this section, we denote ${\cal X}_t$ the effective feasible region
of each period~$t$ defined recursively by
\beq \label{def_calX}
{\cal X}_t := 
\begin{cases}
X_1, & t=1,\\
 \cup_{x \in {\cal X}_{t-1}} X_t(x), &t \ge 2.
\end{cases}
\eeq 
Observe that ${\cal X}_t$ is not necessarily convex and its convex hull is 
denoted by ${\rm Conv}({\cal X}_t )$.
Moreover, letting ${\rm Aff}({\cal X}_t)$ be the affine hull of ${\cal X}_t$
and 
$
{\cal B}_t( \epsilon):= \{y \in {\rm Aff} ({\cal X}_t): \|y\| \le  \epsilon\}, 
$
we use
\[ 
{\cal X}_t(\epsilon) := {\cal X}_t + {\cal B}_t( \epsilon)
\]
to denote ${\cal X}_t$ together with its surrounding neighborhood.

In order to develop a cutting plane algorithm for solving problem \eqnok{dcp},
we need to make a few assumptions and discuss a few quantities that characterize the problem.

\begin{assumption} \label{assum2_bndX}
For any $t \ge 1$, there exists $D_t \ge 0$ s.t.
\beq \label{bound_X}
\| x_t - x'_t\| \le D_t, \ \ \forall x_t, x'_t \in {\cal X}_t, \ \forall t \ge 1. 
\eeq
\end{assumption}

The quantity $D_t$ provides a bound on the ``diameter" of the effective feasible region ${\cal X}_t$. 
Clearly, Assumption~\ref{assum2_bndX} holds if the convex sets $\bar X_t$
are compact, since by definition we have
$
{\cal X}_t \subseteq {\rm Conv}( {\cal X}_t) \subseteq \bar X_t, \ \forall t \ge 1.
$

\begin{assumption} \label{assum2_complete}
For any $t \ge 1$, there exists $\bar \epsilon_t \in (0, + \infty)$ s.t.
\begin{align}
h_t(x) < +\infty, \ \forall x \in {\cal X}_t(\bar \epsilon_t) \ \ \mbox{and} \ \ {\rm rint} (X_{t+1}(x)) \neq \emptyset, \ \forall x \in {\cal X}_t(\bar \epsilon_t), \label{suff_cond}
\end{align}
where ${\rm rint}(\cdot)$ denotes the relative interior of a convex set.
\end{assumption}
Assumption~\ref{assum2_complete} 
describes certain regularity conditions of problem~\eqnok{dcp}. Specifically, the two conditions
in \eqnok{suff_cond} imply that
$h_t$ and  $v_{t+1}$ are finitely valued in ${\cal X}_t(\epsilon_t)$.
The second relation in \eqnok{suff_cond} also implies the Slater condition of the feasible sets
in \eqnok{dcp_feasiblesets} and thus the existence of optimal dual solutions to define the cutting plane models
for problem~\eqnok{dcp}.  Here the relative interior is required due to the nonlinearity 
of the constraint functions in \eqnok{dcp_feasiblesets} and we can replace ${\rm rint} (X_{t+1}(x))$
with $X_{t+1}(x)$ if the latter is polyhedral.
Conditions of these types have been referred to as extended relatively
complete recourse, which is less stringent than imposing
complete recourse with $\bar \epsilon=+\infty$
in the second relation in \eqnok{suff_cond} (see~\cite{GirLecPhi15}).

\vgap

In view of Assumption~\ref{assum2_complete}, the objective functions $f_t$, as given by the summation of $h_t$ and $\lambda v_{t+1}$, must be 
finitely valued in ${\cal X}_t(\bar \epsilon_t)$. In addition, by Assumptions~\ref{assum2_bndX}  the set ${\cal X}_t$ 
is bounded. Hence the convex functions $f_t$ must be Lipschitz continuous over  ${\cal X}_t$ (see, e.g., Section~2.2.4 of \cite{LanBook2020}).
We explicitly state the Lipschitz constants of $f_t$ below since they will be used
in the convergence analysis our algorithm.

\begin{assumption}  \label{assum2}
For any $t \ge 1$, there exists $M_t \ge 0$ s.t.
\begin{align}
|f_t(x_t) - f_t(x'_t)| &\le M_t \| x_t - x'_t\|,  \ \ \forall x_t, x'_t \in {\cal X}_t \label{Lips_f}.
\end{align}
\end{assumption}

We are now ready to describe a dual dynamic programming method for solving problem~\eqnok{dcp} (see Algorithm~\ref{algo_dcpm}).
For notational convenience, we assume that $X_1(x_{0}^k) \equiv  X_1$ for any iteration $k \ge 1$.

\begin{algorithm}
\caption{Dual dynamic programming (DDP) for single-scenario problems}
\begin{algorithmic}[1]
\State Set $\underline v_t^0(x) = -\infty$, $t = 2, \ldots, T$, 
$\underline v_{T+1}^0 = 0$,
and $\ub_t^0 = + \infty$, $t=1, \ldots, T$.
\For {$k =1,2,\ldots,$}

\For {$t = 1, 2, \ldots, T$} \Comment{Forward phase.}
\beq \label{def_x_k_t}
x_t^k \in \Argmin \left\{ \underline f_t^{k-1}(x) := h_t(x) + \lambda \underline v_{t+1}^{k-1} (x): x \in X_t(x_{t-1}^k) \right\}.
\eeq
\EndFor



\State Set $\ub_1^k = \min\{\ub_1^{k-1}, \tsum_{t=1}^T \lambda^{t-1} h_t(x_t^k)\}$.

\State

\State Set $\underline v_{T+1}^k = 0$. \Comment{Backward phase.}
\For {$t = T, T-1, \ldots, 2$} 

\begin{align}
\tilde v^k_{t}(x_{t-1}^k) &= \min \left\{\underline f_t^k(x) := h_t(x) + \lambda \underline v_{t+1}^k(x): x \in X_t(x_{t-1}^k)\right\}. \label{def_lb_k}\\
 (\tilde v_t^k)'(x_{t-1}^k) &= [B_t, Q_t ] y_t^k, \mbox{where $y_t^k$ is the optimal dual multiplier of \eqnok{def_lb_k}.} \nn \\
 \underline v_{t}^k(x) &= \max \left \{\underline v_{t}^{k-1}(x),  
\tilde v_{t}^k(x_{t-1}^k) +  \langle (\tilde v_{t}^k)'(x_{t-1}^k), x - x_{t-1}^k \rangle  \right\}. \label{def_tilde_f}
\end{align}

\EndFor

\EndFor
\end{algorithmic}  \label{algo_dcpm}
\end{algorithm} 

We now make a few observations about the above DDP method.
Firstly, in the forward phase our goal is to compute a new policy $(x_1^k, x_2^k, \ldots, x_T^k)$ sequentially
starting from $x_1^k$ for the first stage. In this phase we utilize the cutting plane 
model $\underline v_{t+1}^{k-1}(\cdot)$ as a surrogate for the value function $v_{t+1}(\cdot)$
in order to approximate the objective function $f_t(\cdot)$ at stage $t$,
because we do not have a convenient expression for the value function $v_{t+1}(\cdot)$.
Since $(x_1^k, x_2^k, \ldots, x_T^k)$ is a feasible policy by definition, 
$ \tsum_{t=1}^T \lambda^{t-1} h_t(x_t^k)$ gives us an upper bound on the optimal value $f^*$ of problem \eqnok{dcp},
and accordingly, $\ub_1^k$ gives us the value associated with the best policy we found so far.

Secondly, given the new generated policy $(x_1^k, x_2^k, \ldots, x_T^k)$, our goal in the backward phase is to update the cutting plane models
$\underline v_t^{k-1}(\cdot)$ to $\underline v_t^{k}(\cdot)$, in order to provide
a possibly tighter approximation of $v_t(\cdot)$. 
More specifically,  
by Assumption~\ref{assum2_complete}, the feasible region of $X_t(x_{t-1}^k)$
of the subproblem in \eqnok{def_lb_k} has a nonempty relative interior. Hence 
the function
value $\tilde v_t^k(x_{t-1}^k)$ 
and the associated vector  $[B_t, Q_t ] y_t^k$ are well-defined, and
they define a supporting hyperplane for the approximate
value function $\tilde v_t^k(\cdot)$ defined in \eqnok{def_lb_k} (after replacing $x_{t-1}^k$ with any $x \in {\cal X}_{t-1}(\bar \epsilon_{t-1})$). 
Using all these supporting hyperplanes of $\tilde v_t^k$ that have been generated so far, we define
a cutting plane model $\underline v_t^k: \bbr^{n_t} \to \bbr$,  which underestimates
the original value function $v_t(\cdot)$ as shown in the following result.

\begin{lemma} \label{lem_bounding}
For any $k \ge 1$,
\begin{align}
 \underline v_t^{k-1}(x) \le  \underline v_t^k(x) \le \tilde v_t^k(x) \le  v_{t}(x), \forall x \in {\cal X}_{t-1}(\bar \epsilon_{t-1}), t=2, \ldots, T,  \label{cp_app2}\\
\underline f_{t}^{k-1}(x) \le  \underline f_{t}^{k}(x) \le f_t(x), \forall x \in {\cal X}_t(\bar \epsilon_t), t=1, \ldots, T. \label{cpp_app_bnd_tildef}
\end{align}
\end{lemma}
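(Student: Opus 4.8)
The plan is to prove the chain \eqref{cp_app2} and \eqref{cpp_app_bnd_tildef} jointly by downward induction on $t$ (from $t=T+1$ down to $t=1$) within a fixed iteration $k$, and then handle the monotonicity in $k$ separately. The base case is immediate: $\underline v_{T+1}^k \equiv 0 \equiv v_{T+1}$, so both \eqref{cp_app2}-type and \eqref{cpp_app_bnd_tildef}-type statements hold trivially at the top level (with $f_T^k$ needing only the inductive input $\underline v_{T+1}^k \le v_{T+1}$, which is an equality here).

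For the inductive step at stage $t$, I would assume \eqref{cp_app2} holds at $t+1$, i.e.\ $\underline v_{t+1}^k(x) \le v_{t+1}(x)$ for all relevant $x$, and deduce the four-term chain at $t$. The rightmost inequality $\tilde v_t^k(x) \le v_t(x)$: since $\underline v_{t+1}^k \le v_{t+1}$ pointwise on ${\cal X}_t(\bar\epsilon_t)$, we have $\underline f_t^k(x) = h_t(x) + \lambda \underline v_{t+1}^k(x) \le h_t(x) + \lambda v_{t+1}(x) = f_t(x)$ for every $x \in X_t(x_{t-1}^k)$, hence minimizing over that feasible set gives $\tilde v_t^k(x_{t-1}^k) \le v_t(x_{t-1}^k)$; the same argument with $x_{t-1}^k$ replaced by any $x \in {\cal X}_{t-1}(\bar\epsilon_{t-1})$ yields $\tilde v_t^k(x) \le v_t(x)$ pointwise. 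This also gives \eqref{cpp_app_bnd_tildef} at $t$: $\underline f_t^k \le f_t$ on ${\cal X}_t(\bar\epsilon_t)$, and $\underline f_t^{k-1} \le \underline f_t^k$ follows once we know $\underline v_{t+1}^{k-1} \le \underline v_{t+1}^k$ (the $k$-monotonicity, handled below). The middle inequality $\underline v_t^k(x) \le \tilde v_t^k(x)$: by \eqref{def_tilde_f}, $\underline v_t^k$ is the max of $\underline v_t^{k-1}$ and all affine cuts of the form $\tilde v_t^j(x_{t-1}^j) + \langle (\tilde v_t^j)'(x_{t-1}^j), x - x_{t-1}^j\rangle$ for $j \le k$; it therefore suffices to show each such cut lies below $\tilde v_t^k$. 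Here I would invoke the fact — asserted in the paragraph preceding the lemma, justified by Assumption~\ref{assum2_complete} (Slater/relative-interior condition guaranteeing an optimal dual multiplier $y_t^j$) — that $[B_t, Q_t] y_t^j$ is a valid subgradient of the convex function $x \mapsto \tilde v_t^j(x)$ at $x_{t-1}^j$; convexity then gives $\tilde v_t^j(x_{t-1}^j) + \langle [B_t,Q_t]y_t^j, x - x_{t-1}^j\rangle \le \tilde v_t^j(x) \le \tilde v_t^k(x)$, where the last step uses that $\tilde v_t^j$ is itself built from $\underline v_{t+1}^j \le \underline v_{t+1}^k$ so $\tilde v_t^j \le \tilde v_t^k$. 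Combined with the inductive $\underline v_t^{k-1} \le \tilde v_t^{k-1} \le \tilde v_t^k$ for the first term in the max, this gives $\underline v_t^k \le \tilde v_t^k$.

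The $k$-monotonicity $\underline v_t^{k-1} \le \underline v_t^k$ (the leftmost inequality of \eqref{cp_app2}) is essentially free: $\underline v_t^k$ is defined in \eqref{def_tilde_f} as a max that explicitly includes $\underline v_t^{k-1}$ as one of its arguments. Likewise $\underline f_t^{k-1} \le \underline f_t^k$ reduces to $\underline v_{t+1}^{k-1} \le \underline v_{t+1}^k$, which is the corresponding statement one stage deeper. So the cleanest bookkeeping is a single downward induction on $t$ whose hypothesis bundles all four inequalities at stage $t+1$, observing that the $k$-monotonicity at stage $t$ follows from its definition plus the $k$-monotonicity already established at $t+1$.

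The main obstacle — and the only place where something nontrivial is used — is the claim that $[B_t,Q_t] y_t^j$ is a genuine subgradient of $\tilde v_t^j(\cdot)$ at $x_{t-1}^j$, valid not just at that point but as a supporting hyperplane over all of ${\cal X}_{t-1}(\bar\epsilon_{t-1})$. This is a Lagrangian-duality / sensitivity argument: the optimal value of the parametric program \eqref{def_lb_k} as a function of the right-hand-side data $B_t x_{t-1} + b_t$ and $Q_t x_{t-1}$ is convex, and its subdifferential at a point where Slater holds contains $-$(optimal dual multiplier), which pulls back through the linear maps $B_t, Q_t$ to give $[B_t, Q_t] y_t^j$. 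The excerpt has already flagged (via Assumption~\ref{assum2_complete} and the discussion after \eqref{def_tilde_f}) that the relative-interior condition is exactly what makes the dual optimum exist and the supporting-hyperplane interpretation valid, so in the write-up I would cite that discussion rather than reprove standard convex-duality sensitivity from scratch. Everything else is monotonicity of $\min$ and $\max$ under pointwise domination of the integrands.
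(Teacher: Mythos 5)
Your proposal is correct and follows essentially the same route as the paper: backward induction on the stage index, with (3.12) deduced from (3.11), the $k$-monotonicity read off from the max in \eqref{def_tilde_f}, $\tilde v_t^k \le v_t$ obtained by minimizing the dominated objective $\underline f_t^k \le f_t$, and the middle inequality coming from the dual-multiplier cut being a supporting hyperplane of $\tilde v_t^j$ (justified by Assumption~\ref{assum2_complete}), combined with $\tilde v_t^j \le \tilde v_t^k$. Your handling of the max over all earlier cuts is in fact slightly more careful than the paper's write-up, but the argument is the same.
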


\begin{proof}
First observe that the inequalities in \eqnok{cpp_app_bnd_tildef} follow directly from \eqnok{cp_app2}
by using the facts that $f_t(x) = h_t(x) + \lambda v_{t+1}(x)$ and $\underline f_{t}^{k}(x) = h_t(x) + \lambda \underline v_{t+1}^k(x)$
due to the definitions of $f_t$ and $\underline f_t^k$ in \eqnok{define_value_function} and \eqnok{def_x_k_t}, respectively.
Moreover, the first relation $\underline v_t^{k-1}(x) \le  \underline v_t^k(x)$ follows directly from \eqnok{def_tilde_f}.

Second, we observe that the functions $\tilde v_t^k$ and  $v_{t}$ are well-defined over ${\cal X}_{t-1}(\bar \epsilon_{t-1})$ due to Assumption~\ref{assum2_complete}
and will show that the remaining inequalities in \eqnok{cp_app2}, i.e., $ \underline v_t^k(x) \le \tilde v_t^k(x) \le  v_{t}(x), \forall x \in {\cal X}_{t-1}(\bar \epsilon_{t-1})$,
 hold by using 
induction backwards for $t= T, \ldots, 1$ at any iteration $k$.
Let us first consider $t=T$.
Note that $\underline v_{T+1}^k = 0$
and thus by comparing the definitions of  $v_{T}(x)$ and $\tilde v_{T}^k(x)$
in \eqnok{define_value_function} and \eqnok{def_lb_k}, we have
$\tilde v_{T}^k(x) = v_{T}(x)$. 
Moreover, by definition
$  \tilde v_{T}^k(x_{T-1}^k) + \langle  (\tilde v_{T}^k)'(x_{T-1}^k), x - x_{T-1}^k \rangle$
is a supporting hyperplane of $\tilde v^k_{T}(x)$ at $x_{T-1}^k$. Combining these observations
with the definition of $\underline v_T^k(x)$ as a bundle of these supporting hyperplanes, 
we have
\begin{align}
\underline v_T^k(x) \le 
  \tilde v_{T}^k(x) = v_{T}(x). \label{last_stage_exact}
\end{align}
Now assume that $ \underline v_t^k(x) \le \tilde v_t^k(x) \le  v_{t}(x)$ for some $0 \le t \le T$.
Using the induction hypothesis of $ \underline v_t^k(x) \le  v_{t}(x)$ in the
the definitions of $v_{t-1}(x)$ and  $\tilde v^k_{t-1}(x)$ in \eqnok{define_value_function} and \eqnok{def_lb_k}, we conclude that
$\tilde v^k_{t-1}(x) \le   v_{t-1}(x)$.
Moreover, by definition $(\tilde v_{t-1}^k)'(x_{t-2}^k)$
is a subgradient of $\tilde v_{t-1}^k(\cdot)$ at $x_{t-2}^k$. Combining these relations,
we conclude
\beq \label{lb_k_t_rel}
 \tilde v_{t-1}^k(x_{t-2}^k)  + \langle (\tilde v_{t-1}^k)'(x_{t-2}^k), x - x_{t-2}^k \rangle
 \le  \tilde v_{t-1}^k(x) \le v_{t-1}(x),
\eeq
which clearly implies that $\underline v_{t-1}^k(x) \le \tilde v_{t-1}^k(x) \le v_{t-1}(x)$ by definition of $\underline v_{t-1}^k$.
\end{proof}

\vgap

In order to establish the complexity of Algorithm~\ref{algo_dcpm}, 
we need to show that the approximation functions $\underline f_t^k(\cdot)$ 
are Lipschitz continuous on ${\cal X}_t$.

\begin{lemma} \label{lemma_lip}
For any $t \ge 1$, there exists $\underline M_t \ge 0$ s.t.
\beq  \label{Lips_under_f}
|\underline f_t^k(x_t) - \underline f_t^k(x'_t) | \le \underline M_t  \| x_t - x'_t\|, \ \forall x_t, x'_t \in {\cal X}_t \ \forall \ k \ge 1. 
\eeq
\end{lemma}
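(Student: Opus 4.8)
The plan is to write the model objective as $\underline f_t^k(x) = h_t(x) + \lambda\, \underline v_{t+1}^k(x)$ and to bound the Lipschitz moduli of the two summands on ${\cal X}_t$ separately, by constants that do not depend on the iteration counter~$k$; then $\underline M_t := L^h_t + \lambda\, \underline L^v_{t+1}$ will satisfy~\eqnok{Lips_under_f}. Here $L^h_t$ is a Lipschitz constant of $h_t$ on ${\cal X}_t$: since $h_t$ is convex and finite on the enlarged set ${\cal X}_t(\bar\epsilon_t)$ by Assumption~\ref{assum2_complete}, and ${\cal X}_t$ is bounded by Assumption~\ref{assum2_bndX}, such a constant exists by the very same reasoning used to justify Assumption~\ref{assum2} for $f_t$; alternatively one may write $h_t = f_t - \lambda v_{t+1}$ and combine Assumption~\ref{assum2} with the Lipschitz continuity of $v_{t+1}$ on ${\cal X}_t$.

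The heart of the argument is the uniform bound $\underline L^v_{t+1}$ on the Lipschitz modulus of $\underline v_{t+1}^k$. When $t=T$ this is trivial, since $\underline v_{T+1}^k\equiv 0$. For $t<T$, first note that for every $k\ge 1$ the function $\underline v_{t+1}^k$ is a maximum of finitely many affine ``cut'' functions, hence a finite, real-valued convex function on $\bbr^{n_t}$. Next, Lemma~\ref{lem_bounding} (namely the monotonicity $\underline v_{t+1}^{k-1}\le\underline v_{t+1}^k$ and the upper bound $\underline v_{t+1}^k\le v_{t+1}$ in~\eqnok{cp_app2}) gives, on ${\cal X}_t(\bar\epsilon_t)$, the sandwich $\underline v_{t+1}^1(x)\le\underline v_{t+1}^k(x)\le v_{t+1}(x)$ for all $k\ge 1$. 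Both bounding functions are fixed, i.e. independent of $k$: $\underline v_{t+1}^1$ is the single affine cut added to the model of $v_{t+1}$ in the backward phase of the first iteration (well defined and finite because the relevant subproblem~\eqnok{def_lb_k} is feasible with a finite optimal value and, by the Slater condition in Assumption~\ref{assum2_complete}, a finite optimal dual multiplier), and $v_{t+1}$ is finite on ${\cal X}_t(\bar\epsilon_t)$. Since ${\cal X}_t(\bar\epsilon_t)$ contains a neighborhood of fixed positive radius $\bar\epsilon_t$ (within ${\rm Aff}({\cal X}_t)$) of the bounded set ${\cal X}_t$, the convex function $\underline v_{t+1}^k$ is trapped, uniformly in $k$, between two fixed functions that are bounded on the compact set ${\rm cl}({\cal X}_t(\bar\epsilon_t/2))$; the standard quantitative Lipschitz estimate for convex functions bounded on a neighborhood then furnishes a bound $\underline L^v_{t+1}$, depending only on $\bar\epsilon_t$ and on the sup and inf of those two fixed functions, for the Lipschitz modulus of every $\underline v_{t+1}^k$ on ${\cal X}_t$. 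Combining with the bound on $h_t$ proves~\eqnok{Lips_under_f}.

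An alternative, perhaps more in line with the rest of the analysis, is to bound the cut slopes $(\tilde v_{t+1}^k)'(x_t^k)=[B_{t+1},Q_{t+1}]\,y_{t+1}^k$ directly by bounding the optimal dual multipliers $y_{t+1}^k$ of~\eqnok{def_lb_k}: the Slater condition supplied by Assumption~\ref{assum2_complete}, together with the boundedness of the effective feasible regions, yields a bound on $\|y_{t+1}^k\|$ uniform over all iterations. Either way, the only point requiring care is the uniformity in $k$: one must ensure that both the upper envelope (the value function $v_{t+1}$, via Lemma~\ref{lem_bounding}) and the lower envelope (a fixed early cut, via the monotonicity of the models) are genuinely iteration-independent and available on a neighborhood of fixed radius, so that the convexity estimate applies with a single constant. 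A minor technicality is that a cut's slope may possess a component orthogonal to ${\rm Aff}({\cal X}_t)$; this is harmless because~\eqnok{Lips_under_f} only concerns points of ${\cal X}_t$, so one may replace each slope by its orthogonal projection onto the directions of ${\rm Aff}({\cal X}_t)$ without altering $\underline v_{t+1}^k$ there.
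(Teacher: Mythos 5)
Your proposal is correct and follows essentially the same route as the paper's proof: the uniformity in $k$ comes from sandwiching the model between a fixed first-iteration cut and the true function on the $\bar\epsilon_t$-enlarged set ${\cal X}_t(\bar\epsilon_t)$ (via Lemma~\ref{lem_bounding}), and then applying the standard subgradient bound for convex functions trapped between two fixed bounded functions on such a neighborhood. The only (cosmetic) difference is that you sandwich $\underline v_{t+1}^k$ between $\underline v_{t+1}^1$ and $v_{t+1}$ and treat $h_t$ separately, whereas the paper bounds the subgradients of $\underline f_t^k$ directly using $\underline f_t^1 \le \underline f_t^k \le f_t$; your remark on projecting slopes onto the directions of ${\rm Aff}({\cal X}_t)$ is a welcome extra precaution.
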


\begin{proof}
 Note that by  Assumption~\ref{assum2_complete},
 for any $x \in {\cal X}_t(\bar \epsilon)$, the feasible region of $X_{t+1}(x)$ has a nonempty relative interior,
 hence for any $i = 1, \ldots,k$, the function
values $\tilde v_{t+1}^i(x_{t}^i)$ 
and the associated vectors  $[B_{t+1}, Q_{t+1} ] y_{t+1}^i$ are well-defined.
Therefore, the piecewise linear function $\underline v_{t+1}^k(x)$ given by
\[
\underline v_{t+1}^k(x) = \max_{i=1,\ldots, k} \tilde v_{t+1}^i(x_{t}^i) + \langle [B_{t+1}, Q_{t+1} ] y_{t+1}^i, x - x_{t}^i\rangle 
\]
is well-defined and sub-differentiable. This observation, in view of the convexity of $h_t$ and Assumption~\ref{assum2_complete},
then implies that 
$\underline f_t^k(x) = h_t(x) + \lambda \underline v_t^k(x)$ is  sub-differentiable on ${\cal X}_t$.
We now provide a bound for the subgradients $(f_t^k)'$ on ${\cal X}_t$.
Note that for any $x \in {\cal X}_t(\bar \epsilon)$ and $x_0 \in {\cal X}_t$, we have
\begin{align}
\langle (\underline f_t^k)'(x_0), x - x_0 \rangle &\le \underline f_t^k(x) - \underline f_t^k(x_0) \le f(x) -\underline f_t^1(x_0), \label{bnd_two_lips0}
\end{align}
where the last inequality follows from \eqnok{cpp_app_bnd_tildef}.
Letting $\|\cdot\|_* := \max_{\|x \| \le 1} \langle \cdot, x \rangle$ denotes
the conjugate norm of $\|\cdot\|$
and setting $x = x_0 + \bar \epsilon (\underline f_t^k)'(x_0) / \|(\underline f_t^k)'(x_0)\|_*$, we have 
\begin{align*}
\bar \epsilon \|(\underline f_t^k)'(x_0)\|_* 
\le  f(x) - \underline f_t^1(x_0)
\le \max_{x \in {\cal X}(\bar \epsilon)} f(x) - \min_{x \in {\cal X}} \underline f_t^1(x), 
\end{align*} 
which implies that
\[
\|(\underline f_t^k)'(x_0)\|_* \le \tfrac{1}{\bar \epsilon} [\max_{x \in {\cal X}_t(\bar \epsilon)} f(x) - \min_{x \in {\cal X}_t} \underline f_t^1(x)], \forall x_0 \in {\cal X}_t.
\]
The result in \eqnok{Lips_under_f} then follows directly from the above inequality, the boundedness of ${\cal X}_t$ and hence  ${\cal X}(\bar \epsilon)$,
and the fact that
\[
|\underline f_t^k(x_t) - \underline f_t^k(x'_t) | \le \max\{ \| (\underline f_t^k)'(x_t) \|_*, \| (\underline f_t^k)'(x'_t) \|_* \} \|x_t - x'_t\|
\]
due to the convexity of $\underline f_t$ and the Cauchy Schwarz inequality.
\end{proof}

\vgap

We now add some discussions about the Lipschitz continuity of $\underline f_t^k$ obtained in Lemma~\ref{lemma_lip}.
Firstly, it might be interesting to establish some relationship between the Lipschitz constants $\underline M_t$  and $M_t$ for
$\underline f_t^k$ and $f_t$, respectively. Under certain circumstances we can provide such a relationship.
In particular, let us suppose that 
\beq \label{close_f_t_under_f_t}
\underline f_t^k (x_0) \le f_t(x_0) \le \underline f_t^k(x_0) +\bar \epsilon.
\eeq
It then follows from the above assumption and \eqnok{bnd_two_lips0}
that 
\[
\langle (\underline f_t^k)'(x_0), x - x_0 \rangle \le f(x) - f(x_0) + \bar \epsilon.
\]
Setting $x = x_0 + \bar \epsilon \underline f'(x_0) / \|\underline f'(x_0)\|_*$, we conclude 
\begin{align*}
\bar \epsilon \|(\underline f_t^k)'(x_0)\|_* 
\le f(x) - f(x_0) + \bar \epsilon 
\le M \|x - x_0\| + \bar \epsilon
\le \bar \epsilon M + \bar \epsilon,
\end{align*} 
which implies that
\begin{align}
\|\underline f'(x_0)\|_* &\le M_t + 1 \ \ \mbox{and} \ \ \underline M_t \le M_t +1. \label{bnd_ftilde1} 
\end{align}
Note however that the above relationship does not necessarily hold for a situation more general than \eqnok{close_f_t_under_f_t}.

Secondly, while it is relatively easy to understand how the discounting factor $\lambda$ impacts the
Lipschitz constants $M_t$ for the objective functions $f_t$ over different stages,
its impact on the Lipschitz constants $\underline M_t$ for the approximation functions $\underline f_t^k$ is
more complicated since we do not know how the Lagrange multipliers $y_t^k$ changes w.r.t. $\lambda$.
On the other hand, the discounting factor does play a role in
compensating the approximation errors accumulated  over different stages for the DDP method.
Since we cannot quantify precisely such a compensation by simply scaling the Lipschitz constants $M_t$
and $\underline M_t$, we decide to incorporate explicitly the discounting factor
$\lambda$ into our problem formulation, as well as the analysis of our algorithms.
We will see that to incorporate $\lambda$ just makes some calculations, but not the
major development of the analysis, more complicated.
One can certainly assume that $\lambda = 1$ in order to see the basic idea of our convergence analysis.

\vgap

In order to establish the complexity of DDP, we need to 
introduce an important notion as follows.

\begin{definition} \label{def_sat}
We say that a search point $x_{t}^k$ gets $\epsilon_t$-{\sl saturated} 
at iteration $k$
if
\beq \label{def_saturation}
v_{t+1}(x_t^k) - \underline v_{t+1}^k(x_t^k) \le \epsilon_t. 
\eeq
\end{definition}

\vgap

In view of the above definition and \eqnok{cp_app2}, for any $\epsilon_t$-saturated 
point $x_t^k$ we must have
\begin{align}
 \underline v_{t+1}^k(x_t^k) \le v_{t+1}(x_t^k) \le \underline v_{t+1}^k(x_t^k) + \epsilon_t.
\end{align}
In other words,  
$\underline v_{t+1}^k$ will be a tight approximation of $v_{t+1}$ at
$x_t^k$ with error bounded by $\epsilon_t$.
 By \eqnok{cp_app2}, we also have
 $\underline v_{t+1}^k(x_t^k) \le \underline v_{t+1}^{k'}(x_t^k)$ for any $k' \ge k$,
 and hence
 \[
 v_{t+1}(x_t^k) - \underline v_{t+1}^{k'}(x_t^k) \le  v_{t+1}(x_t^k) - \underline v_{t+1}^{k}(x_t^k) \le
 \epsilon_t.
 \]
This implies that once
a point $x_t^k$ becomes $\epsilon_t$-saturated at the $k$-th iteration,
the functions $\underline v_{t+1}^{k'}$ will also be a tight approximation of $v_{t+1}$
at $x_t^k$ with error bounded by $\epsilon_t$ for any iteration $k' \ge k$.

\vgap

Below we describe some basic properties about the saturation of
the search points.

\begin{lemma} \label{lem:last_stage}
Any search point $x_{T-1}^k$
generated for the $(T-1)$-th stage must be $0$-saturated for any $k \ge 1$.
\end{lemma}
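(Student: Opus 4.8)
The plan is to exploit the fact that at the final stage there is nothing left to approximate. Since $v_{T+1} \equiv 0$ by \eqnok{define_value_function1} and the algorithm sets $\underline v_{T+1}^k = 0$ exactly, the approximate value function $\tilde v_T^k$ defined in \eqnok{def_lb_k} coincides with the true value function $v_T$ defined in \eqnok{define_value_function}, i.e., $\tilde v_T^k(x) = v_T(x)$ for all relevant $x$. This identity was already recorded in the proof of Lemma~\ref{lem_bounding} (see \eqnok{last_stage_exact}), so I would simply invoke it rather than re-derive it. I would also note, as in the setup of the backward phase, that $\tilde v_T^k(x_{T-1}^k)$ and the vector $(\tilde v_T^k)'(x_{T-1}^k)$ are well defined because Assumption~\ref{assum2_complete} guarantees a nonempty relative interior of $X_T(x_{T-1}^k)$.

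Next I would examine the backward update \eqnok{def_tilde_f} at $t = T$. The cut added at iteration $k$ is the supporting hyperplane of $\tilde v_T^k$ at the search point $x_{T-1}^k$, namely $x \mapsto \tilde v_T^k(x_{T-1}^k) + \langle (\tilde v_T^k)'(x_{T-1}^k), x - x_{T-1}^k \rangle$; evaluating this affine function at $x = x_{T-1}^k$ gives exactly $\tilde v_T^k(x_{T-1}^k) = v_T(x_{T-1}^k)$. Since $\underline v_T^k$ is, by \eqnok{def_tilde_f}, the pointwise maximum of $\underline v_T^{k-1}$ and this cut, it follows that $\underline v_T^k(x_{T-1}^k) \ge v_T(x_{T-1}^k)$.

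On the other hand, Lemma~\ref{lem_bounding} (relation \eqnok{cp_app2}) gives the reverse inequality $\underline v_T^k(x_{T-1}^k) \le v_T(x_{T-1}^k)$. Combining the two yields $v_T(x_{T-1}^k) - \underline v_T^k(x_{T-1}^k) = 0$, which by Definition~\ref{def_sat} with parameter $\epsilon_{T-1} = 0$ is precisely the statement that $x_{T-1}^k$ is $0$-saturated at iteration $k$; since $k \ge 1$ was arbitrary, the lemma follows.

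There is no real obstacle here: the argument is a two-line sandwich between the cut value and the global under-estimation property. The only point requiring care is well-definedness of the last-stage subproblem and its supporting hyperplane, which is already handled by Assumption~\ref{assum2_complete}. Conceptually, the content of the lemma is simply that the cutting-plane model of the last value function is \emph{exact} at every query point, because that value function carries no downstream approximation error.
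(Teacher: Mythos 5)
Your proposal is correct and follows essentially the same route as the paper: the exactness $\tilde v_T^k = v_T$ (since $\underline v_{T+1}^k = 0$), the lower bound $\underline v_T^k(x_{T-1}^k) \ge \tilde v_T^k(x_{T-1}^k)$ from the cut added in \eqnok{def_tilde_f}, and the upper bound from \eqnok{cp_app2}, yielding equality and hence $0$-saturation. The only difference is cosmetic — you spell out the hyperplane evaluation and the well-definedness via Assumption~\ref{assum2_complete}, which the paper leaves implicit.
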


\begin{proof}
Note that by \eqnok{cp_app2}, we have
$\underline v_{T}^k(x_{T-1}^k) \le v(x_{T-1}^k)$.
Moreover,  
by \eqnok{def_tilde_f},
\[
\underline v_{T}^k(x_{T-1}^k) \ge \tilde v^k_{T} (x_{T-1}^k)  = v(x_{T-1}^k)
\]
where the last equality follows from the fact that $v_{T+1}^k = 0$ and the definitions
of $v_{T}(x)$ and $\tilde v_{T}^k(x)$
in \eqnok{define_value_function} and \eqnok{def_lb_k}.
Therefore we must have $\underline v_{T}^k(x_{T-1}^k) = v(x_{T-1}^k)$,
which, in view of \eqnok{def_saturation}, 
implies that $x_{T-1}^k$ is $0$-saturated.
\end{proof}

\vgap

We now state 
a crucial observation for DDP
that relates the saturation of search points
across two consecutive stages.
More specifically,
the following result shows that if one search point $x_t^j$ at stage $t$
has been $\epsilon_t$-saturated at iteration $j$, and a new search point generated
at a later iteration $k$ is close to $x_t^j$, then 
a search point in the previous stage $t-1$ will get $\epsilon_{t-1}$-saturated
with an appropriately chosen value for $\epsilon_{t-1}$.

\begin{proposition} \label{prop:saturation}
Suppose that the search point $x_t^k$ generated at 
the $k$-th iteration is close enough to $x_t^{j}$
generated in a previous iteration $ 1 \le j < k$, i.e., 
\beq \label{eq:closeenough}
\|x_t^k - x_t^j\| \le \delta_t 
\eeq
for some $\delta_t \in [0, +\infty)$.
Also assume that the search point $x_t^j$ is $\epsilon_t$-saturated, i.e.,
\beq \label{eq:existing_sat}
v_{t+1}(x^j_{t})  - \underline v_{t+1}^j(x^j_{t}) \le  \epsilon_{t}.
\eeq
Then we have
\begin{align} 
f_t(x_t^k) - \underline f_t^{k-1}(x_t^k) &= \lambda [v_{t+1}(x_t^k) - \underline v_{t+1}^{k-1}(x_t^k)] \nn\\
&\le \epsilon_{t-1}:= (M_t + \underline M_t) \delta_t + \lambda \epsilon_t. \label{gap_reducation}
\end{align}
In addition, for any $t \ge 2$, we have
\beq \label{saturation_cond}
v_{t}(x^k_{t-1})  - \underline v_{t}^k(x^k_{t-1}) \le  \epsilon_{t-1}
\eeq
and hence the
search point $x^k_{t-1}$ will get $\epsilon_{t-1}$-saturated at iteration $k$.
\end{proposition}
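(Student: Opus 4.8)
The plan is to establish the gap bound \eqnok{gap_reducation} first and then obtain the saturation conclusion \eqnok{saturation_cond} as an almost immediate consequence. The equality $f_t(x_t^k) - \underline f_t^{k-1}(x_t^k) = \lambda[v_{t+1}(x_t^k) - \underline v_{t+1}^{k-1}(x_t^k)]$ is merely a restatement of the definitions $f_t = h_t + \lambda v_{t+1}$ and $\underline f_t^{k-1} = h_t + \lambda \underline v_{t+1}^{k-1}$ from \eqnok{define_value_function} and \eqnok{def_x_k_t}, since the $h_t$ terms cancel. To bound this quantity I would insert the previously saturated point $x_t^j$ and split it into the telescoping sum $[f_t(x_t^k) - f_t(x_t^j)] + [f_t(x_t^j) - \underline f_t^{k-1}(x_t^j)] + [\underline f_t^{k-1}(x_t^j) - \underline f_t^{k-1}(x_t^k)]$. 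The first term is at most $M_t \delta_t$ by the Lipschitz continuity of $f_t$ (Assumption~\ref{assum2}) together with \eqnok{eq:closeenough}, and the third is at most $\underline M_t \delta_t$ by Lemma~\ref{lemma_lip}; both $x_t^k$ and $x_t^j$ lie in ${\cal X}_t$ (the latter because $x_t^j \in X_t(x_{t-1}^j) \subseteq {\cal X}_t$), so these estimates apply.

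For the middle term I would again strip off $h_t$ to get $\lambda[v_{t+1}(x_t^j) - \underline v_{t+1}^{k-1}(x_t^j)]$, then use the monotonicity $\underline v_{t+1}^{j} \le \underline v_{t+1}^{k-1}$ from Lemma~\ref{lem_bounding} (valid since $j \le k-1$) to replace $\underline v_{t+1}^{k-1}$ by $\underline v_{t+1}^{j}$, and finally invoke the saturation hypothesis \eqnok{eq:existing_sat} to bound it by $\lambda \epsilon_t$. Summing the three pieces yields exactly $(M_t + \underline M_t)\delta_t + \lambda \epsilon_t = \epsilon_{t-1}$, which is \eqnok{gap_reducation}.

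For the saturation conclusion \eqnok{saturation_cond} (which is meaningful only for $t \ge 2$, where $v_t$ and $\underline v_t^k$ are defined), the key observation is that the backward-phase value overestimates the forward-phase quantity: $\tilde v_t^k(x_{t-1}^k) = \min_{x \in X_t(x_{t-1}^k)} \underline f_t^k(x) \ge \min_{x \in X_t(x_{t-1}^k)} \underline f_t^{k-1}(x) = \underline f_t^{k-1}(x_t^k)$, where the inequality uses $\underline f_t^{k-1} \le \underline f_t^k$ from Lemma~\ref{lem_bounding} and the equality is the defining property \eqnok{def_x_k_t} of $x_t^k$. Since the cut added in \eqnok{def_tilde_f} is active at $x_{t-1}^k$ we have $\underline v_t^k(x_{t-1}^k) \ge \tilde v_t^k(x_{t-1}^k)$, while $v_t(x_{t-1}^k) \le f_t(x_t^k)$ because $x_t^k$ is feasible for the subproblem defining $v_t(x_{t-1}^k)$. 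Chaining these,
\[
v_t(x_{t-1}^k) - \underline v_t^k(x_{t-1}^k) \le f_t(x_t^k) - \underline f_t^{k-1}(x_t^k) \le \epsilon_{t-1},
\]
which is \eqnok{saturation_cond}; comparing with Definition~\ref{def_sat} shifted by one stage shows $x_{t-1}^k$ is $\epsilon_{t-1}$-saturated.

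I expect the only delicate points to be bookkeeping ones: making sure the Lipschitz estimates are applied on ${\cal X}_t$ where both search points live, and correctly tracking the four functions in play ($\underline f_t^{k-1}$ from the forward phase, $\underline f_t^k$ and $\tilde v_t^k$ from the backward phase, and the true $v_t, f_t$) so that the monotonicity inequalities from Lemma~\ref{lem_bounding} point the right way. The conceptual content — that closeness of a new search point to an already-saturated one propagates saturation one stage back, with the error contracting by the factor $\lambda$ plus an additive Lipschitz-times-distance term — is entirely captured by the telescoping decomposition above.
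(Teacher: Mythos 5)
Your proof is correct and follows essentially the same route as the paper: the same three-term telescoping with Lipschitz bounds of $M_t\delta_t$ and $\underline M_t\delta_t$, the saturation hypothesis contributing $\lambda\epsilon_t$, and the same chain $v_t(x_{t-1}^k) \le f_t(x_t^k)$, $\underline v_t^k(x_{t-1}^k) \ge \tilde v_t^k(x_{t-1}^k) \ge \underline f_t^{k-1}(x_t^k)$ for the saturation claim. The only cosmetic difference is that you telescope through $\underline f_t^{k-1}$ and invoke the monotonicity $\underline v_{t+1}^{j} \le \underline v_{t+1}^{k-1}$ inside the middle term, whereas the paper first passes from $\underline f_t^{k-1}$ to $\underline f_t^{j}$ by monotonicity and telescopes through $\underline f_t^{j}$ -- the two orderings use identical ingredients.
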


\begin{proof}
By the definitions of $f_t$ and $\underline f_t^{k-1}$ in \eqnok{define_value_function} and \eqnok{def_x_k_t} , we have
\[
f_t(x) -  \underline f_{t}^{k-1}(x) = \lambda[v_{t+1}(x)  - \underline v_{t+1}^{k-1}(x)], \forall x \in X_t(x_{t-1}^k)
\]
and hence first identity in \eqnok{gap_reducation} holds.
It follows from the definition of $x_t^k$ in \eqnok{def_x_k_t} and the first relation in \eqnok{cpp_app_bnd_tildef} that
\begin{align}
f_t(x_t^k) - \min_{x \in X_t(x_{t-1}^k)}  \underline f_t^{k-1} (x) &= f_t(x_t^k) - \underline f_t^{k-1}(x_t^k) \nn \\
&\le f_t(x_t^k) - \underline f_t^{j}(x_t^k). \label{temp_rel3}
\end{align}
Now by \eqnok{Lips_f} and \eqnok{Lips_under_f}, we have
\[
|f_t(x_t^k) - f_t(x_t^j)\| \le M_t \|x_t^k - x_t^j\| \ \ \mbox{and} \ \ |\underline f_t^{j}(x_t^k) - \underline f_t^{j}(x_t^j)| \le \underline M_t \|x_t^k - x_t^j\|.
\]
In addition, by \eqnok{eq:existing_sat} and 
the definition $f_t$ and $\underline f_t^{j}$, we have
\begin{align*}
f_t(x^j_{t}) -  \underline f_{t}^j(x^j_{t})
&= \lambda[v_{t+1}(x^j_{t})  - \underline v_{t+1}^j(x^j_{t})] \le  \lambda \epsilon_{t}.
\end{align*}
Combining the previous observations and \eqnok{eq:closeenough}, 
we have
\begin{align}
f_t(x_t^k) - \underline f_t^{k-1}(x_t^k)
&\le [f_t(x_t^k) - f_t(x_t^j)]
+ [f_t(x_t^j) - \underline f_t^{j}(x_t^j)] + [\underline f_t^{j}(x_t^j) - \underline f_t^{j}(x_t^k)] \nn \\
&\le (M_t + \underline M_t) \|x_t^k - x_t^j\| + \lambda \epsilon_t \nn \\
&\le (M_t + \underline M_t) \delta_t + \lambda \epsilon_t = \epsilon_{t-1},
 \label{temp_rel4}
\end{align}
where the last equality follows from the definition of $\epsilon_{t-1}$ in \eqnok{gap_reducation}.
Thus we have shown the inequality in \eqnok{gap_reducation}.

We will now show that the search point $x_k^{t-1}$ in the preceding stage $t-1$ must also be $\epsilon_{t-1}$-saturated at iteration $k$.
Note that $x_t^k$
is a feasible solution for the $t$-th stage problem and hence that the function value
$f_t(x_t^k)$ must be greater than the optimal value $v_{t}(x^k_{t-1})$.
Using this observation, we have
\begin{align}
v_{t}(x^k_{t-1}) - \underline v_{t}^k(x^k_{t-1})
\le f_t(x_t^k) - \underline v_{t}^k(x^k_{t-1}). \label{temp_rel1}
\end{align}
Moreover, using the definitions of  $\tilde v_t^k(x^k_{t-1}) $ and $\underline v_{t}^k(x^k_{t-1})$
in \eqnok{def_lb_k} and \eqnok{def_tilde_f}, the relations in \eqnok{cp_app2} and
 the fact that $\underline v_{t+1}^k(x) \ge \underline v_{t+1}^{k-1}(x)$
due to \eqnok{cpp_app_bnd_tildef}, we have
\begin{align}
\underline v_{t}^k(x^k_{t-1}) 
&= \max\{\underline v_{t}^{k-1}(x^k_{t-1}), \tilde v_t^k(x^k_{t-1}) \} \nn\\
&= \tilde v_t^k(x^k_{t-1}) \nn\\
&= \min \left\{ \underline f_t^k(x): x \in X_t(x_{t-1}^k)\right\} \nn \\
&\ge \min \left\{ \underline f_t^{k-1}(x): x \in X_t(x_{t-1}^k)\right\} \nn\\
&= \underline f_t^{k-1}(x_t^k), \label{temp_rel2}
\end{align}
where the last identity follows from the definition of $x_t^k$ in \eqnok{def_x_k_t}.
Putting together \eqnok{temp_rel1} and \eqnok{temp_rel2}, we have
\begin{align}
v_{t}(x^k_{t-1}) - \underline v_{t}^k(x^k_{t-1}) &\le  f_t(x_t^k) - \underline f_t^{k-1}(x_t^k) \nn \\
&\le \epsilon_{t-1}, \label{temp_rel3}
\end{align}
where the last inequality follows from \eqnok{temp_rel4}.
The above inequality then implies that $x^k_{t-1}$ gets $\epsilon_{t-1}$-saturated
at the $k$-th iteration.
\end{proof}

\vgap

Observe that the functions $f_t(\cdot)$ are not directly
computable since they depend on the exact value
functions $v_{t+1}(\cdot)$. The
following result relates
the notion of saturation to the gap between
a computable upper bound $\tsum_{t=1}^T  \lambda^{t-1} h_t(x_t^k)$
and the lower bound $\underline f_1^{k-1} (x_1^k)$
on the optimal value $f^*$, 
under the assumption that the concluding inequality \eqnok{gap_reducation} obtained in Proposition~\ref{prop:saturation}
holds for all the stages, i.e., $\lambda [v_{t+1}(x_t^k) -  \underline v_{t+1}^{k-1} (x_t^k)] \le \epsilon_{t-1}$, $\forall t=1, \ldots, T$.

\begin{lemma} \label{lem:bnd_gap}
Suppose that at some iteration $k \ge 1$, we have
\begin{align}
\lambda [v_{t+1}(x_t^k) -  \underline v_{t+1}^{k-1} (x_t^k)] &\le \epsilon_{t-1}, \label{gap_bnd_ass1}
\end{align}
for any $t =1, \ldots, T$. Then
we have
\beq \label{eq:validation}
\tsum_{t=1}^T \lambda^{t-1} h_t(x_t^k) -  \underline f_1^{k-1} (x_1^k) \le
 \tsum_{t=1}^T \lambda^{t-1} \epsilon_{t-1}.
\eeq
\end{lemma}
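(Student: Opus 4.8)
The plan is to telescope the computable upper bound $\tsum_{t=1}^T \lambda^{t-1}h_t(x_t^k)$ using the recursion $h_t(x) = f_t(x) - \lambda v_{t+1}(x)$ from \eqnok{define_value_function}, and then control each resulting stagewise term by combining the feasibility of $x_t^k$ with the hypothesis \eqnok{gap_bnd_ass1}. Substituting $h_t(x_t^k) = f_t(x_t^k) - \lambda v_{t+1}(x_t^k)$ into the sum and shifting the index of the $v$-terms by one, while using $v_{T+1}(x_T^k)=0$ from \eqnok{define_value_function1}, yields the identity
\[
\tsum_{t=1}^T \lambda^{t-1} h_t(x_t^k) = f_1(x_1^k) + \tsum_{t=2}^T \lambda^{t-1}\big[f_t(x_t^k) - v_t(x_{t-1}^k)\big].
\]

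Next I would bound each bracketed term. For $t \ge 2$, since $x_t^k$ is the exact minimizer of the model $\underline f_t^{k-1}$ over the true feasible set $X_t(x_{t-1}^k)$ by \eqnok{def_x_k_t}, and since $x_{t-1}^k \in {\cal X}_{t-1}$ forces $X_t(x_{t-1}^k) \subseteq {\cal X}_t \subseteq {\cal X}_t(\bar\epsilon_t)$ so that $\underline f_t^{k-1} \le f_t$ there by \eqnok{cpp_app_bnd_tildef}, we get
\[
\underline f_t^{k-1}(x_t^k) = \min_{x \in X_t(x_{t-1}^k)} \underline f_t^{k-1}(x) \;\le\; \min_{x \in X_t(x_{t-1}^k)} f_t(x) = v_t(x_{t-1}^k).
\]
On the other hand, the definitions of $f_t$ and $\underline f_t^{k-1}$ give $f_t(x_t^k) - \underline f_t^{k-1}(x_t^k) = \lambda[v_{t+1}(x_t^k) - \underline v_{t+1}^{k-1}(x_t^k)] \le \epsilon_{t-1}$, which is exactly \eqnok{gap_bnd_ass1}. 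Chaining the two displays gives $f_t(x_t^k) - v_t(x_{t-1}^k) \le \epsilon_{t-1}$ for $t \ge 2$, and the same computation at $t=1$ gives $f_1(x_1^k) - \underline f_1^{k-1}(x_1^k) \le \epsilon_0$. Plugging these into the telescoped identity yields $\tsum_{t=1}^T \lambda^{t-1}h_t(x_t^k) \le \underline f_1^{k-1}(x_1^k) + \epsilon_0 + \tsum_{t=2}^T \lambda^{t-1}\epsilon_{t-1}$, which is \eqnok{eq:validation}.

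The argument is mostly bookkeeping, so the real content is the middle display, i.e., $v_t(x_{t-1}^k) \ge \underline f_t^{k-1}(x_t^k)$: this is the step where it matters that the forward pass \eqnok{def_x_k_t} minimizes the model over the \emph{genuine} feasible set $X_t(x_{t-1}^k)$ (so the minimum is attained at $x_t^k$ and can be compared against $v_t$), and that Lemma~\ref{lem_bounding} applies, for which the containment $X_t(x_{t-1}^k) \subseteq {\cal X}_t$ is needed. A secondary point I expect to need care with is remembering to invoke the hypothesis also at $t=1$ — that is what converts $f_1(x_1^k)$ into the computable lower bound $\underline f_1^{k-1}(x_1^k)$ — and to use $v_{T+1}\equiv 0$ so that the telescoping leaves no leftover boundary term at the last stage.
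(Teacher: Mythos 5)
Your proposal is correct and follows essentially the same route as the paper: the key inequality $\underline f_t^{k-1}(x_t^k)=\min_{x\in X_t(x_{t-1}^k)}\underline f_t^{k-1}(x)\le v_t(x_{t-1}^k)$ combined with the hypothesis \eqnok{gap_bnd_ass1} at each stage, followed by the $\lambda^{t-1}$-weighted telescoping that uses $v_{T+1}\equiv 0$. You merely perform the telescoping identity first and then insert the stagewise bounds, whereas the paper derives the stagewise inequalities first and then sums them; the content is identical.
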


\begin{proof}
By 
the definition of $\underline f_1^{k-1} (x_1^k)$
in \eqnok{def_x_k_t}, we have
\[
\underline f_1^{k-1}(x_1^k) = h_1(x^k_1) + \lambda \underline v_2^{k-1}(x^k_1),
\]
which 
together with our assumption in \eqnok{gap_bnd_ass1} imply that
\begin{align} 
h_1(x^k_1) + \lambda v_2^{k-1}(x^k_1)   -  \underline f_1^{k-1} (x_1^k)  
= \lambda [ {v_2^{k-1}}(x^k_1)  -  \underline v_2^{k-1}(x^k_1)] 
\le \epsilon_0. \label{eq:hh_1}
\end{align}
Moreover, it follows from \eqnok{def_x_k_t} and \eqnok{cpp_app_bnd_tildef}  that 
\begin{align}
h_t(x^k_t) + \lambda \underline v_{t+1}^{k-1} (x^k_t) &= \min \left\{ \underline f_t^{k-1} (x): x \in X_t(x_{t-1}^k) \right\} \nn \\
&\le \min \left\{ f(x): x \in X_t(x_{t-1}^k) \right\}
= v_t(x_{t-1}^k), \nn
\end{align}
which, in view of our assumption
\[
\lambda [v_{t+1}(x^k_t) - \underline v_{t+1}^{k-1} (x^k_t)] \le  \epsilon_{t-1},
\]
then implies that
\beq \label{eq:hh_2}
h_t(x^k_t) + \lambda v_{t+1}(x^k_t)  \le v_t(x_{t-1}^k) + \epsilon_{t-1}
\eeq
for any $t =2, \ldots, T$.
Multiplying $\lambda^{t-1}$ to both side of the above inequalities, 
summing them up with the inequalities in \eqnok{eq:hh_1}, and using the fact that
$v_{T+1}(x_T^k) =0$, 
we have
\begin{align*}
\tsum_{t=1}^T \lambda^{t-1} h_t(x_t^k)   -  \underline f_1^{k-1} (x_1^k)
\le \tsum_{t=1}^{T} \lambda^{t-1} \epsilon_{t-1}.
\end{align*}
\end{proof}

\vgap

In the sequel, we use $S_t^{k-1}$ to denote the set of $\epsilon_t$-saturated search points
at stage~$t$ that have been generated 
 by the algorithm before the $k$-th iteration.
Using these sets, 
we now define the notion of {\sl distinguishable} search points as follows.

\begin{definition} \label{def_distinguishable}
We say that a search point $x_t^k$ at stage $t$
is $\delta_t$-distinguishable if 
\beq \label{eq:farenough}
g_{t}^k(x_t^k) > \delta_t,
\eeq
where $g_{t}^k(x)$ denotes the distance between $x$ to the set $S_t^{k-1}$
given by
\[
g_{t}^k(x) = 
\begin{cases}
\min_{s \in S_t^{k-1}} \|s - x\|, & t < T, \\
0, & \mbox{o.w.}
\end{cases}
\]
\end{definition}

Below we show that each iteration of the DDP method will either find an $\epsilon_0$-solution of problem~\eqnok{dcp},
or find a new $\epsilon_t$-saturated and $\delta_t$-distinguishable search point at some stage $t$ by properly specifying
$\delta_t$ and $\epsilon_t$ for $t = 0, \ldots, T-1$.

\begin{proposition} \label{prop_DDP_iteration}
Assume that $\delta_t \in [0, +\infty)$ for $t=1, \ldots, T$ are given. Also let us denote
\beq \label{eq:closeenough_dcp}
\epsilon_t := 
\begin{cases}
0, & t = T-1,\\
\tsum_{\tau=t}^{T-2} [ (M_{\tau+1} + \underline M_{\tau+1}) \delta_{\tau+1}\lambda^{\tau-t}], & t \le T-2.
\end{cases}
\eeq
Then, every iteration $k$ of the DDP method will
either generate a $\delta_t$-distinguishable and 
$\epsilon_t$-saturated search point $x_t^k$  at some stage $t = 1, \ldots, T$,
or find a feasible policy $(x_1^k, \ldots, x_T^k)$ of problem \eqnok{dcp} such that
\begin{align}
f_1(x_1^k) - f^* &\le \epsilon_0, \label{eq:bndOptGap} \\
\tsum_{t=1}^T  \lambda^{t-1} h_t(x_t^k) -  \underline f_1^{k-1} (x_1^k) &\le \tsum_{t=1}^{T} \lambda^{t-1} \epsilon_{t-1}. \label{eq:bndGap}
\end{align}
\end{proposition}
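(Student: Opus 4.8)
The plan is to prove the stated dichotomy by fixing an arbitrary iteration $k$, assuming the first alternative fails at \emph{every} stage — i.e. for each $t \in \{1,\ldots,T\}$ the point $x_t^k$ is not simultaneously $\delta_t$-distinguishable and $\epsilon_t$-saturated — and deriving \eqnok{eq:bndOptGap} and \eqnok{eq:bndGap} for the policy $(x_1^k,\ldots,x_T^k)$, which is feasible by construction \eqnok{def_x_k_t}. As a preliminary I would record the one arithmetic fact needed about the constants: the closed form \eqnok{eq:closeenough_dcp} is precisely the solution of the backward recursion $\epsilon_{t-1} = (M_t+\underline M_t)\delta_t + \lambda\epsilon_t$ with terminal value $\epsilon_{T-1}=0$, which is exactly the quantity that Proposition~\ref{prop:saturation} produces in \eqnok{gap_reducation}.

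The core is a backward induction over $t = T-1, T-2, \ldots, 1$ yielding, for each such $t$, both (i) $x_t^k$ is $\epsilon_t$-saturated and (ii) $\lambda[v_{t+1}(x_t^k) - \underline v_{t+1}^{k-1}(x_t^k)] \le \epsilon_{t-1}$. The base case (i) at $t = T-1$ is Lemma~\ref{lem:last_stage} together with $\epsilon_{T-1}=0$. For the step at a stage $t$: knowing (i) at $t$ and using the standing assumption, since $x_t^k$ \emph{is} $\epsilon_t$-saturated the failure of the conjunction forces $x_t^k$ to be \emph{not} $\delta_t$-distinguishable, i.e. $g_t^k(x_t^k)\le\delta_t$; as this minimum is over the finite set $S_t^{k-1}$ it is attained, so there is a previously generated $\epsilon_t$-saturated search point $x_t^j\in S_t^{k-1}$ with $\|x_t^k - x_t^j\|\le\delta_t$. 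Proposition~\ref{prop:saturation}, applied at stage $t$ with this $j$, then gives (ii) at $t$ (its relation \eqnok{gap_reducation}) and, when $t\ge 2$, that $x_{t-1}^k$ is $\epsilon_{t-1}$-saturated (its relation \eqnok{saturation_cond}), which is (i) at $t-1$ and closes the induction. Stage $t=T$ needs nothing extra: $v_{T+1}\equiv\underline v_{T+1}^{k-1}\equiv 0$ makes (ii) at $t=T$ read $0\le\epsilon_{T-1}=0$; and stage $t=T$ can never be the distinguishable stage because $g_T^k\equiv 0\le\delta_T$, so the first alternative is effectively about stages $1,\ldots,T-1$ only.

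With (ii) established for every $t=1,\ldots,T$, I would invoke Lemma~\ref{lem:bnd_gap} directly to obtain \eqnok{eq:bndGap}. For \eqnok{eq:bndOptGap} I would combine: the $t=1$ instance of (ii), $\lambda[v_2(x_1^k) - \underline v_2^{k-1}(x_1^k)]\le\epsilon_0$; the identity $f_1(x_1^k) - \underline f_1^{k-1}(x_1^k) = \lambda[v_2(x_1^k) - \underline v_2^{k-1}(x_1^k)]$ coming from the definitions of $f_1$ and $\underline f_1^{k-1}$; and the bound $\underline f_1^{k-1}(x_1^k) = \min_{x\in X_1}\underline f_1^{k-1}(x) \le \min_{x\in X_1} f_1(x) = f^*$, which uses that $x_1^k$ minimizes $\underline f_1^{k-1}$ over $X_1$ (recall $X_1(x_0^k)\equiv X_1$) and that $\underline f_1^{k-1}\le f_1$ on ${\cal X}_1$ by Lemma~\ref{lem_bounding}. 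Chaining these gives $f_1(x_1^k) - f^* \le \epsilon_0$.

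I do not expect a genuine technical obstacle here: once Proposition~\ref{prop:saturation} and Lemmas~\ref{lem:last_stage} and \ref{lem:bnd_gap} are available, the argument is essentially careful bookkeeping along the stages. The one place that needs care is the logical order inside the induction — one must establish the saturation claim (i) at a stage \emph{before} concluding non-distinguishability there, since the hypothesis only excludes the conjunction of the two properties. A secondary item to check cleanly is the degenerate case $T=2$, where the induction reduces to the base stage, Proposition~\ref{prop:saturation} is used only at $t=1$ (producing no further saturation but still the needed bound $\lambda[v_2(x_1^k) - \underline v_2^{k-1}(x_1^k)]\le\epsilon_0$), and one verifies the telescoped right-hand side $\tsum_{t=1}^T\lambda^{t-1}\epsilon_{t-1}$ reduces correctly.
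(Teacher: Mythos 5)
Your proof is correct and follows essentially the same route as the paper: it rests on the same three ingredients (Lemma~\ref{lem:last_stage} for the base saturation at stage $T-1$, Proposition~\ref{prop:saturation} to propagate saturation one stage back whenever $g_t^k(x_t^k)\le\delta_t$, and Lemma~\ref{lem:bnd_gap} plus $\underline f_1^{k-1}(x_1^k)\le f^*$ for the final bounds), together with the observation that \eqnok{eq:closeenough_dcp} solves the recursion $\epsilon_{t-1}=(M_t+\underline M_t)\delta_t+\lambda\epsilon_t$. The only difference is packaging: the paper enumerates $T$ explicit cases according to the largest stage at which $g_{t}^k(x_{t}^k)>\delta_t$, whereas you assume the first alternative fails everywhere and run a backward induction, which is logically equivalent and equally valid (your care in establishing saturation before deducing non-distinguishability is exactly the point that makes the contrapositive work).
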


\begin{proof}
First note that the definition of $\epsilon_t$ is computed
according to the recursion $\epsilon_{t-1} = (M_t + \underline M_t) \delta_t + \lambda \epsilon_t$ (see \eqnok{gap_reducation})
and the assumption that $\epsilon_{T-1} = 0$.
Next, observe that exactly one of the following 
$T$ cases will happen at the $k$-th iteration of the DDP method.
\begin{itemize}
\item [] Case $1$: $g_{t}^k(x_{t}^k) \le \delta_t$, $\forall 1 \le t \le T-1$;
\item [] Case $t$, $t=2, \ldots, T-1$: $g_{i}^k(x_{i}^k) \le \delta_i$, $\forall t \le i \le T-1$, and $g_{t-1}^k(x_{t-1}^k) > \delta_{t-1}$;
\item [] Case $T$: $g_{T-1}^k(x_{T-1}^k) > \delta_{T-1}$.
\end{itemize}

We start with the first case. In this case, we have  $g_{t}^k(x_{t}^k) \le \delta_t$, $\forall 1 \le t \le T-1$.
Hence, $x_t^k$ must be close to an existing $\epsilon_t$-saturated
point $x_t^{j_t}$ for some $j_t \le k-1$ s.t.
\beq \label{close_enough_1}
\|x_t^k - x_t^{j_t}\| \le \delta_t, \ \forall  1 \le t \le T-1.
\eeq
It then follows from the above relation (with $t=1$), \eqnok{gap_reducation},
and the fact $f^* \ge \underline f_1^{k-1}(x_1^k)$ that
\begin{align}
f_1(x_1^k) - f^* &\le f_1(x_1^k) - \underline f_1^{k-1}(x_1^k) = \lambda [v_{2}(x_{1}^k) - \underline v_{2}^{k-1}(x_{1}^k)] \le \epsilon_0.
\end{align}
Moreover, we conclude from \eqnok{gap_reducation} and \eqnok{close_enough_1} that
\beq \label{gap_reduction_t0}
\lambda [v_{t+1}(x_{t}^k) - \underline v_{t+1}^{k-1}(x_{t}^k)] \le \epsilon_{t-1}, \forall 1 \le t \le T-1.
\eeq
Hence, the assumptions in Lemma~\ref{lem:bnd_gap}
hold and the result in \eqnok{eq:bndGap} immediately follows.

We now examine the $t$-th case for any  $2 \le t \le T-2$.
In these cases, we have  $g_{t-1}^k(x_{t-1}^k) > \delta_{t-1}$ and thus $x_{t-1}^k$ is $\delta_{t-1}$-distinguishable.
In addition, we have $g_{t}^k(x_{t}^k) \le \delta_{t}$. As a result, $x_t^k$ must be close to an existing $\epsilon_t$-saturated
point $x_t^{j_t}$ with $j_t \le k-1$.
This observation, in view of \eqnok{saturation_cond}, then implies that $v_{t}(x^k_{t-1})  - \underline v_{t}^k (x^k_{t-1}) \le   \epsilon_{t-1}$.
Hence $x^k_{t-1}$ is both $\delta_{t-1}$-distinguishable and $\epsilon_{t-1}$-saturated.

For the $T$-th case, we have $g_{T-1}^k(x_{T-1}^k) > \delta_{T-1}$
and hence  $x_{T-1}^k$ is $\delta_t$-distinguishable. Also by Lemma~\ref{lem:last_stage}, $x_{T-1}^k$ 
will get $0$-saturated. Therefore, $x_{T-1}^k$ is $\delta_{T-1}$-distinguishable and $\epsilon_{T-1}$-saturated (with $\epsilon_{T-1} = 0$).

Combining all these cases together, we conclude that 
 every DDP iteration will
either generate a $\delta_t$-distinguishable and 
$\epsilon_t$-saturated search point at some stage $t = 1, \ldots, T$,
or find a feasible policy of problem \eqnok{dcp}
satisfying \eqnok{eq:bndOptGap} and \eqnok{eq:bndGap}.
\end{proof}

\vgap

It is worth noting that each DDP iteration can possibly generate more than one
$\delta_t$-distinguishable and $\epsilon_t$-saturated 
points. For example, for the $t$-case in the 
the above proof of Proposition~\ref{prop_DDP_iteration}, we pointed out that
$x_{t-1}^k$ is $\delta_{t-1}$-distinguishable and $\epsilon_{t-1}$-saturated.
Some other search point $x_i^k$ with $i \le t-2$ in the preceding stages 
might also become $\delta_{i}$-distinguishable and $\epsilon_{i}$-saturated
even though there are no such guarantees.

\vgap

We are now ready to establish the complexity of the DDP method.
For the sake of simplicity, we will fix the norm $\|\cdot\|$  to be an $l_\infty$ norm
to define the distances and Lipschitz constants at each stage $t$.
It should be noted, however, that the DDP method itself
does not really depend on the selection of norms. The $l_\infty$ norm is chosen
because it will help us to count the number of search points needed
in each stage to guarantee the convergence of the algorithm.

\begin{theorem} \label{the_main}
Suppose that the norm used to define the bound on 
$D_t$ in \eqnok{bound_X} is the $l_\infty$ norm.
Also assume that $\delta_t \in [0, +\infty)$ are given and
that $\epsilon_t$ are defined in \eqnok{eq:closeenough_dcp}.
Then the number of iterations performed by the DDP method
to find a solution satisfying \eqnok{eq:bndOptGap} and \eqnok{eq:bndGap}
can be bounded by
\beq \label{def_N}
\tsum_{t=1}^{T-1} \left( \tfrac{D_t }{\delta_t} +1 \right)^{n_t} + 1.
\eeq
In particular, If $n_t \le n$, $D_t \le D$, $\max\{M_t, \underline M_t\} \le M$ and $\delta_t = \epsilon$ for all $t=1, \ldots, T$,
then the DDP method will find a feasible policy $(x_1^k, \ldots, x_T^k)$ of problem \eqnok{dcp} s.t.
\begin{align}
f_1(x_1^k) - f^* &\le 2 M   \min\{\tfrac{1}{1-\lambda}, T-1 \} \, \epsilon, \label{temination1} \\
\tsum_{t=1}^T  \lambda^{t-1} h_t(x_t^k) -  \underline f_1^{k-1} (x_1^k) &\le 2 M  \min\{ \tfrac{1}{(1-\lambda)^2}, \tfrac{T(T-1)}{2}\} \, \epsilon \label{temination2}
\end{align}
within at most
\beq \label{def_N_s}
(T-1)\left( \tfrac{D}{\epsilon} +1 \right)^{n} + 1
\eeq
iterations.
\end{theorem}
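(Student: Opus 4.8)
The plan is to marry the per-iteration dichotomy of Proposition~\ref{prop_DDP_iteration} with a packing argument of exactly the flavour used in Proposition~\ref{prop:cuttingplane}, carried out stage by stage. Fix a stage $t\in\{1,\dots,T-1\}$ and consider the collection $\bigcup_k S_t^k$ of all $\epsilon_t$-saturated search points the algorithm ever records at stage $t$. I claim these points are pairwise more than $\delta_t$ apart in the $l_\infty$ norm: whenever a new $\delta_t$-distinguishable and $\epsilon_t$-saturated point $x_t^k$ is produced at iteration $k$, Definition~\ref{def_distinguishable} forces $\|x_t^k-s\|>\delta_t$ for every $s\in S_t^{k-1}$, so appending $x_t^k$ preserves $\delta_t$-separation, and the claim follows by induction on $k$. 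Since every recorded point is a feasible stage-$t$ iterate and hence lies in ${\cal X}_t$, which by Assumption~\ref{assum2_bndX} has $l_\infty$-diameter at most $D_t$ and so fits in a box of side length $D_t$, the open $l_\infty$-balls of radius $\delta_t/2$ about these points are disjoint and contained in a box of side $D_t+\delta_t$; comparing $n_t$-dimensional volumes bounds their number by $(D_t/\delta_t+1)^{n_t}$. (When $\delta_t=0$ this bound is $+\infty$ and there is nothing to prove.)

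Next I would invoke Proposition~\ref{prop_DDP_iteration} with the $\epsilon_t$ prescribed by \eqnok{eq:closeenough_dcp}, which the theorem assumes: every iteration either already produces a feasible policy satisfying \eqnok{eq:bndOptGap}--\eqnok{eq:bndGap}, or produces a \emph{new} $\delta_t$-distinguishable and $\epsilon_t$-saturated point at some stage $t\le T-1$ (new because distinguishability puts it at positive distance from all previously saturated stage-$t$ points). Consequently each iteration of the second type permanently consumes one slot of the per-stage budget estimated above, so there can be at most $\tsum_{t=1}^{T-1}(D_t/\delta_t+1)^{n_t}$ such iterations; the next iteration must then be of the first type. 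Hence a feasible policy obeying \eqnok{eq:bndOptGap} and \eqnok{eq:bndGap} is produced within $\tsum_{t=1}^{T-1}(D_t/\delta_t+1)^{n_t}+1$ iterations, which is \eqnok{def_N}.

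For the specialized claims I would simply substitute the uniform bounds. Plugging $n_t\le n$, $D_t\le D$, $\delta_t=\epsilon$ into \eqnok{def_N} gives \eqnok{def_N_s}. For the accuracy, $f_1(x_1^k)-f^*\le\epsilon_0$ and $\tsum_{t=1}^T\lambda^{t-1}h_t(x_t^k)-\underline f_1^{k-1}(x_1^k)\le\tsum_{t=1}^T\lambda^{t-1}\epsilon_{t-1}$ by \eqnok{eq:bndOptGap}--\eqnok{eq:bndGap}, so it remains to bound these two quantities using the closed form \eqnok{eq:closeenough_dcp}. With $M_{\tau+1}+\underline M_{\tau+1}\le 2M$ and $\delta_{\tau+1}=\epsilon$ one gets $\epsilon_0\le 2M\epsilon\tsum_{\tau=0}^{T-2}\lambda^\tau$, and, after exchanging the order of the resulting double sum so that each $\lambda^\tau\epsilon$-term is counted $\tau+1$ times, $\tsum_{t=1}^T\lambda^{t-1}\epsilon_{t-1}\le 2M\epsilon\tsum_{\tau=0}^{T-2}(\tau+1)\lambda^\tau$. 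Bounding the geometric sum $\tsum_{\tau=0}^{T-2}\lambda^\tau$ by $\min\{(1-\lambda)^{-1},\,T-1\}$ and the arithmetico-geometric sum $\tsum_{\tau=0}^{T-2}(\tau+1)\lambda^\tau$ by $\min\{(1-\lambda)^{-2},\,T(T-1)/2\}$ (summing the infinite series for the first argument in each case, and using $\lambda\le 1$ to count $T-1$, resp.\ $T(T-1)/2$, unit-bounded terms for the second) yields \eqnok{temination1} and \eqnok{temination2}.

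The one place that needs care is the bookkeeping in the first paragraph: one must make precise that ``distinguishable at iteration $k$,'' which is measured against $S_t^{k-1}$, genuinely propagates to a global $\delta_t$-separation of the entire recorded saturated set, and that a non-terminating iteration cannot merely re-certify an already-saturated point. Once that is in place, the volume estimate and the series manipulations are entirely routine.
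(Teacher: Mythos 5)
Your proposal is correct and takes essentially the same route as the paper: the per-iteration dichotomy of Proposition~\ref{prop_DDP_iteration} combined with a stage-by-stage $\delta_t$-separation (packing) count of at most $\left(\tfrac{D_t}{\delta_t}+1\right)^{n_t}$ distinguishable saturated points, followed by the same geometric/arithmetico-geometric series bounds on $\epsilon_0$ and $\tsum_{t=1}^{T}\lambda^{t-1}\epsilon_{t-1}$. The only imprecision is your claim that the whole recorded set $\cup_k S_t^k$ is pairwise $\delta_t$-separated (points can enter $S_t^k$ saturated but not distinguishable); this is harmless, since each newly counted $\delta_t$-distinguishable point is at distance greater than $\delta_t$ from all of $S_t^{k-1}$, hence from all previously counted points, which is all the packing bound needs.
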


\begin{proof}
Let us count the total number of possible search points for
saturation before a solution satisfying \eqnok{eq:bndOptGap} and \eqnok{eq:bndGap}
is found. Using \eqnok{eq:farenough} and
the assumption the effective feasible region for each stage $t$
is inside a box with side length $D_t$ (c.f., \eqnok{bound_X}),
we can see that the number of possible $\delta_t$-distingushable search points for saturation
at each stage is given by
\[
N_t := \left( \tfrac{D_t }{\delta_t} +1 \right)^{n_t}.
\]
This observation together with Proposition~\ref{prop_DDP_iteration} 
then imply that the total number of
iterations performed by DDP will be bounded by $\tsum_{t=1}^{T-1} N_t + 1$
and hence by \eqnok{def_N}.

Now suppose that  $n_t \le n$, $D_t \le D$, $\max\{M_t, \underline M_t\}  \le M$ and $\delta_t = \epsilon$ for all $t=1, \ldots, T$.
We first provide a bound on $\epsilon_t$ defined in \eqnok{eq:closeenough_dcp}.  For $0 \le t \le T-2$, we have
\begin{align}
\epsilon_t &=
\tsum_{\tau=t}^{T-2} \lambda^{\tau-t}[(M_{\tau+1}+ \underline M_{\tau+1}) \delta_{\tau+1}] \nn \\
&=  2 M   \tsum_{\tau=t}^{T-2} \lambda^{\tau-t} \epsilon \nn\\
&\le 2 M   \min\{ \tfrac{1 - \lambda^{T-t-1}}{1-\lambda}, T-t-1 \} \epsilon \nn\\
&\le 2 M \min\{\tfrac{1}{1-\lambda}, T-t-1 \}  \epsilon, \label{def_epsilon_t_precisely}
\end{align}
and as a result, 
\begin{align}
\tsum_{t=1}^{T} \lambda^{t-1} \epsilon_{t-1}
&= \tsum_{t=1}^{T} \lambda^{t-1} \epsilon_{t-1} = \tsum_{t=0}^{T-2} \lambda^{t} \epsilon_{t} \nn\\
&\le 2 M  \tsum_{t=0}^{T-2} [\lambda^t \min\{\tfrac{1}{1-\lambda}, T-t-1 \}]  \epsilon \nn\\
&\le 2 M \tsum_{t=0}^{T-2} \min\{\tfrac{\lambda^t }{1-\lambda}, T-t-1 \} \epsilon \nn\\
&\le 2 M  \min\{ \tfrac{1}{(1-\lambda)^2}, \tfrac{T(T-1)}{2}\} \epsilon. \label{def_epsilon_t_precisely1}
\end{align}
Using these bounds in \eqnok{eq:bndOptGap} and \eqnok{eq:bndGap},
we obtain relations \eqnok{temination1} and \eqnok{temination2}.
Moreover, the iteration complexity bound in \eqnok{def_N_s} follows directly from \eqnok{def_N}.
\end{proof}

\vgap

We now add some remarks about the results obtained in Theorem~\ref{the_main}.

Firstly, similar to the basic cutting plane method, the bound in \eqnok{def_N} has an exponential dependence on
$n_t$. However, since the algorithm itself 
does not require us to explicitly discretize the decision variables in $\bbr^{n_t}$,
the complexity bound actually depends on the dimension of the affine space spanned by effective feasible region ${\cal X}_t$
defined in \eqnok{def_calX}, which can be smaller than the nominal dimension $n_t$.

Secondly, it is interesting to examine the dependence of the complexity bound in \eqnok{def_N_s} on
the number of stages $T$. In particular, if the discounting factor $\lambda < 1$,
the number of iterations required to find an $\epsilon$-solution of problem \eqnok{dcp}, i.e., a point $\bar x_1$ s.t. $f_1(\bar x_1) - f^* \le \epsilon$
only linearly depends on $T$. When the discounting factor $\lambda = 1$,
we can see that $T$ also appears in the termination criterions \eqnok{temination1}
and \eqnok{temination2}. 
As a result, the number of iterations required to find an $\epsilon$-solution of
problem\eqnok{dcp} will depend on $T^n$. The discounting factor provides a mechanism to compensate the errors
accumulated from approximating the value function $v_{t+1}$ by $\underline v_{t+1}^k$
starting from $t = T-1$ to $t =1$. 

Thirdly, while the termination criterion in \eqnok{temination1}
cannot be verified since the function value $f_1$ and $f^*$ are
not easily computable, the gap between the upper and lower bound in the l.h.s. of \eqnok{temination2} can be 
computed as we run the algorithm. It should be noted that
the dependence on $T$ for these two criterions are slightly different especially when
the discounting factor $\lambda =1$ (see the r.h.s. of \eqnok{temination1} and \eqnok{temination2}).

\section{Explorative dual dynamic programming} \label{sec_DDDP}
In this section, we generalize the DDP method for solving the multi-stage
stochastic optimization problems which have potentially an exponential number of scenarios.
As discussed in Section~\ref{sec_intro}, we assume that we can sample
from the probability distribution $P_t$ of the random vector $\xi_t$, $t = 2, \ldots, T$.
A sample average approximation (SAA) of the original problem \eqnok{multstage}
is constructed by replacing the true distribution of $\xi_t = (\bmA_t,\bmb_t,\bmB_t, \bmQ_t, \bmp_t, \bmc_t)$ with
the empirical distribution $P_{N_t}$ based on a random
sample
\[
\tilde \xi_{ti} = (\tilde A_{ti},\tilde b_{ti}, \tilde B_{ti}, \tilde Q_{ti}, \tilde p_{ti}, \tilde c_{ti}), i = 1, \ldots, N_t
\]
from the distribution $P_t$ of size $N_t$. Consequently the probability
distribution $P_2 \times \cdots \times P_T$ of the random process
$\xi_2, \ldots, \xi_T$ is replaced by $P_{N_2} \times \cdots \times P_{N_T}$.
Under the stage-wise independence assumption of $P_t$ and hence $P_{N_T}$,
it has been shown in \cite{Sha11} that under mild regularity assumptions we can
approximate problem~\eqnok{multstage1} by the SAA problem defined as

\begin{equation} \label{multstage1_SAA}
\begin{array}{ll}
F^* := \min_{x_1 \in X_1} \{ F_{11}(x_1):= H_1(x_1,c_1)+ \lambda  V_{2}(x_1)\},  
\end{array}
\end{equation}
where the value factions $V_t$, $t = 2, \ldots, T$, are recursively defined by
\begin{equation}\label{define_value_function_sddp}
\begin{array}{lll}
V_t(x_{t-1}) &:=& \tfrac{1}{N_t} \tsum_{i=1}^{N_t} \nu_{ti} (x_{t-1}),\\
\nu_{ti}(x_{t-1})  &:= & \min_{x_t \in X_t(x_{t-1}, \tilde \xi_{ti})} \{ F_{ti}(x_t):= H_t(x_t, \tilde c_{ti})+ \lambda V_{t+1}(x_t)\},
 \end{array}
 \end{equation}
 and
\begin{equation}\label{Defi_sto_V1_m_SAA}
 \begin{array}{lll}
  V_{T+1}(x_{T}) = 0.
\end{array}
\end{equation}

We will focus on how to solve the SAA problem in \eqnok{multstage1_SAA}.
The essential difference between this problem and the single-scenario problem in
\eqnok{dcp} is that each stage $t$ involves $N_t$ (rather than one) subproblems.
As a consequence, when determining the
search point $x_t^k$ at each stage
$t$ in the forward phase, we need to choose one out of $N_t$ feasible solutions and each one of
them corresponds to a 
realization $\tilde \xi_{ti}$ of the random variables. 
In this section, we will present a deterministic dual dynamic programming
method which chooses the feasible solution in the forward phase in an
aggressive manner, while in next section, we will discuss a stochastic approach
in which the feasible solution in the forward phase will be chosen randomly.
As we will see, the former approach will exhibit better
iteration complexity while the latter one is easier to implement.
We start with the deterministic approach also because
the analysis for the latter stochastic method is built on the one for the deterministic approach. 

Let ${\cal X}_{ti}$ be the effective feasible region for the $i$-th subproblem in stage $t$,
and $\bar {\cal X}_t$ be the effective feasible region all the subproblems in stage $t$, 
respectively, given by
\[
{\cal X}_{t i} :=
\begin{cases}
X_1, & t = 1,\\
\cup_{x \in \bar {\cal X}_{t-1}} X_t(x, \tilde \xi_{ti}),& t \ge 2,
\end{cases}
\]
and
\[
\bar {\cal X}_{t} :=
\begin{cases}
X_1, & t = 1,\\
\cup_{i=1,\ldots, N_t} {\cal X}_{ti}, &t \ge 2.
\end{cases}
\]
Observe that $\bar{\cal X}_t$ is not necessarily convex. 
Moreover, letting ${\rm Aff}(\bar{\cal X}_t)$ be the affine hull of $\bar{\cal X}_t$
and 
$
{\cal B}_{t}( \epsilon):= \{y \in {\rm Aff} (\bar{\cal X}_t): \|y\| \le  \epsilon\}, 
$
we use
\begin{align*}
\bar {\cal X}_t(\epsilon) &:= \bar {\cal X}_{t} + {\cal B}_{t}( \epsilon)
\end{align*}
to denote $\bar {\cal X}_t$ together with its small surrounding neighborhood.

We make the following assumptions throughout this section.

\begin{assumption} \label{assum2_bndX_m}
For any $t \ge 1$, there exists $D_t \ge 0$ s.t.
\beq \label{bound_X_sddp}
\| x_t - x'_t\| \le D_t, \ \ \forall x_t, x'_t \in \bar {\cal X}_t, \ \forall t \ge 1. 
\eeq
\end{assumption}

With a little abuse of notation, we still use $D_t$ as in the previous section
to bound the ``diameter" of the effective feasible region $\bar {\cal X}_t$. 
Clearly, Assumption~\ref{assum2_bndX_m} holds if the convex sets $\bar X_t$
are compact, since by definition we have
$
\bar {\cal X}_t \subseteq {\rm Conv}(\bar {\cal X}_t) \subseteq \bar X_t, \ \forall t \ge 1.
$

\begin{assumption} \label{assum2_complete_m}
For any $t \ge 1$, there exists $\bar \epsilon_t \in (0,+\infty)$ s.t.
\begin{align}
H_t(x, \tilde c_{ti})&< +\infty, \ \forall x \in \bar {\cal X}_{t}(\bar \epsilon_t), \forall i = 1, \ldots, N_t, \label{suff_cond_m1} \\
 {\rm rint} \left(X_{t+1}(x,\tilde \xi_{(t+1)i})\right) &\neq \emptyset, \ \forall x \in \bar {\cal X}_{t}(\bar \epsilon_t), \forall i = 1, \ldots, N_{t+1}, \label{suff_cond_m2}
\end{align}
where ${\rm rint}(\cdot)$ denotes the relative interior of a convex set.
\end{assumption}

Assumption~\ref{assum2_complete_m} 
describes certain regularity conditions of problem~\eqnok{multstage1_SAA}. Specifically, the conditions
in \eqnok{suff_cond_m1} and \eqnok{suff_cond_m2}
 imply that
$H_t(x, \tilde c_{ti})$ and  $V_{t+1}$ are finitely valued in a small neighborhood of ${\cal X}_{ti}$.
The second relation in \eqnok{suff_cond} also implies the Slater condition of the feasible sets
in \eqnok{define_value_function_sddp} and thus the existence of optimal dual solutions to define the cutting plane models
for problem~\eqnok{multstage1_SAA}.  Here the relative interior is required due to the nonlinearity 
of the constraint functions in \eqnok{define_value_function_sddp} and we can replace $ {\rm rint}\left (X_{t+1}(x,\tilde \xi_{(t+1)i})\right) $
with $X_{t+1}(x,\tilde \xi_{(t+1)i})$ if the latter is polyhedral.

\vgap

In view of Assumption~\ref{assum2_complete_m}, the objective functions $F_{ti}$
must be 
Lipschitz continuous over ${\cal X}_{ti}(\bar \epsilon_t)$. We explicitly state the Lipschitz constants of $F_{ti}$ below since they will be used
in the convergence analysis our algorithms. For the sake of notation convenience,
we still use $M_t$ to denote the Lipschitz constants for $F_{ti}$.

\begin{assumption}  \label{assum2_m}
For any $t \ge 1$ and $i = 1, \ldots, N_t$, there exists $M_t \ge 0$ s.t.
\begin{align}
|F_{ti}(x_t) - F_{ti}(x'_t)| &\le M_t \| x_t - x'_t\|, \ \ \forall x_t, x'_t \in {\cal X}_{ti} \label{Lips_f_sddp}.
\end{align}
\end{assumption}

\begin{algorithm}
\caption{Explorative dual dynamic programming (EDDP)}
\begin{algorithmic}[1] 

\State Set $\underline V_t^0(x) = -\infty$, $t = 2, \ldots, T$, $\underline V_{T+1}^{k} (x) = 0$, $k \ge 1$, and $S_t^0 = \emptyset$, $t=1, \ldots, T$.
\For {$k =1,2,\ldots,$}

\For {$t = 1, \ldots, T$}  \Comment{Forward phase.} 

\For {$i = 1, 2, \ldots, N_t$}

\begin{align}
\tilde x_{ti}^k &\in \Argmin_{x \in X_t(x_{t-1}^k, \tilde \xi_{ti})} \left\{ \underline F_{ti}^{k-1}(x) := H_t(x, \tilde c_{ti}) + \lambda \underline V_{t+1}^{k-1} (x)\right\}.  \label{def_x_k_t_sddp}\\
g_{t}^k(\tilde x_{ti}^k) &= 
\begin{cases}
\min_{s \in S_t^{k-1}} \|s - \tilde x_{ti}^k\|, & t < T, \\
0, & \mbox{o.w.}
\end{cases}
 \label{def_gap_k_t}
\end{align}

\EndFor

\State Choose $x_t^k$ from $\{\tilde x_{ti}^k\}$ such that
$
g_t^k(x_t^k) = \max\limits_{i=1, \ldots, N_t} g_{t}^k(\tilde x_{ti}^k).
$ \label{line_aggressive}
\EndFor

\vgap

\State {\bf if} {$g_1^k(x_1^k) \le \delta_0$} {\bf then}  Terminate.

\vgap

\For {$t = T, T-1, \ldots, 2$}    \Comment{Backward phase.}

\If {$g_t^k(x_t^k) \le \delta_t$}
\State Set $S_{t-1}^k = S_{t-1}^{k-1} \cup \{x_{t-1}^k\}$. \label{def_sat_set}
\EndIf

\For {$i = 1, \ldots, N_t$}

\begin{align}
\tilde \nu_{ti}^k(x_{t-1}^k) &= \min_{x \in X_t(x_{t-1}^k, \tilde \xi_{ti})} \left\{ 
\underline F_{ti}^{k}(x)  := H_t(x, \tilde c_{ti}) + \lambda \underline V_{t+1}^{k} (x) \right\}. \label{def_lb_k_sddp}\\
(\tilde \nu_{ti}^k)'(x_{t-1}^k) &= [\tilde B_{ti}, \tilde Q_{ti} ] y_{ti}^k, 
 \mbox{where $y_{ti}^k$ is the optimal} \nn\\
&\quad \quad \quad  \mbox{dual multipliers of \eqnok{def_lb_k_sddp}}.\label{def_lb_k_sddp1}
\end{align}

\EndFor

\State 
\begin{align}
& {\tilde V}_t^k = \tfrac{1}{N_t} \tsum_{j=1}^{N_t} \tilde \nu_{tj}^k(x_{t-1}^k),
({\tilde V_t^k})' =\tfrac{1}{N_t} \tsum_{j=1}^{N_t} (\tilde \nu_{tj}^k)'(x_{t-1}^k). \label{def_tilde_f_sddp0}\\
&\underline V_{t}^k(x) = \max \left \{\underline V_{t}^{k-1}(x),  {\tilde V}_t^k  + \langle ( {\tilde V_t^k})' ,
x - x_{t-1}^k \rangle \right\}. \label{def_tilde_f_sddp}
\end{align}
\EndFor

\EndFor
\end{algorithmic} \label{algo_sddp}
\end{algorithm}

\vgap

We now formally state the explorative dual dynamic programming (EDDP) method as shown in Algorithm~\ref{algo_sddp}.
A distinctive feature of EDDP is that it maintains a set of saturated search points $S_t^k$
for each stage $t$.
Similar to Definition~\ref{def_sat}, we 
say that a search point $x_{t}^k$ generated by the EDDP method is $\epsilon_t$-{\sl saturated} 
at iteration $k$
if
\beq \label{def_saturation_sddp}
V_{t+1}(x_t^k) - \underline V_{t+1}^k(x_t^k) \le \epsilon_t. 
\eeq
Moreover, similar to Definition~\ref{def_distinguishable},
we say an $\epsilon_t$-saturated search point $x_t^k$ at stage $t$
is $\delta_t$-distinguishable if 
\[
\| x_t^k - x_t^j\| > \delta_t
\]
for all other $\epsilon_t$-saturated search points $x_t^j$ that have been generated 
for stage $t$ so far by the algorithm. Equivalently, an $\epsilon_t$-saturated search point $x_t^k$
is $\delta_t$-distinguishable if
\beq \label{eq:farenough_sddp}
g_t^k(x_t^k) > \delta_t.
\eeq
Here $g_t^k(x_t^k)$ (c.f., \eqnok{def_gap_k_t}) denotes the distance between $x_{t}^k$ 
to the set $S_t^{k-1}$, i.e., the set of currently saturated search points in stage $t$.
Similar to the DDP method, 
saturation is defined for two given related sequences $\{\epsilon_t\}$ and $\{\delta_t\}$. More precisely,
the proposed algorithm takes $\{\delta_t\}$ as an initial argument and ends with $\{\epsilon_t\}$ (derived from $\{\delta_t\}$) 
saturated points.

In the forward phase of EDDP, for each stage $t$, we solve $N_t$ subproblems as shown in \eqnok{def_x_k_t_sddp}
to compute the search points $\tilde x_{ti}^k$, $i = 1, \ldots, N_t$.
For each  $\tilde x_{ti}^k$, we further compute the quantity $g_{t}^k(\tilde x_{ti}^k)$ in \eqnok{def_gap_k_t}, i.e.,
the distance between $\tilde x_{ti}^k$ and the set $S_t^{k-1}$ of currently saturated search points in stage $t$. 
Then we will choose
from $\tilde x_{ti}^k$, $i = 1, \ldots, N_t$, the one with the largest value of $g_{t}^k(\tilde x_{ti}^k)$
as $x_t^k$, i.e., 
$g_t^k(x_t^k) = \max_{i=1, \ldots, N_t} g_{t}^k(\tilde x_{ti}^k)$.
We can break the ties arbitrarily (or randomly to be consistent with the algorithm in the next section).
The search point $x^k_t$ is deemed to be saturated
if $g^k_t (x^k_t)$ is small enough, therefore so is the case for $\tilde x^k_{ti}$ for all $i$.
As a consequence, the point $x^k_{t-1}$ must also be saturated and can be added to $S^k_{t-1}$.
We call the sequence $(x_1^k, \ldots, x_T^k)$
a forward path at iteration $k$, since it is the trajectory generated in the forward phase
for one particular scenario of the data process $\tilde \xi_{ti}$.
In view of the above discussion, the EDDP method always chooses the most ``distinguishable" forward path to encourage exploration
in an aggressive manner (See Line~\ref{line_aggressive} of Algorithm~\ref{algo_sddp}). This also explains the origin
of the name EDDP.

The backward phase of EDDP is similar to the DDP in Algorithm~\ref{algo_dcpm} with
the following differences. First, we need to update the set $S_t^k$ for the saturated search points.
Second, the computation of the cutting plane model also requires the solutions of $N_t$ subproblems 
in \eqnok{def_lb_k_sddp}.

The following result is similar to Lemma~\ref{lem_bounding} for the DDP method.

\begin{lemma}
For any $k \ge 1$,
\begin{align}
 \underline V_t^{k-1}(x) \le  \underline V_t^k(x) \le \tfrac{1}{N_t}\tsum_{j=1}^{N_t} \tilde \nu_{tj}^k(x) \le  V_{t}(x), \forall x \in \bar {\cal X}_{t-1}(\bar \epsilon_{t-1}), t=2, \ldots, T,  \label{cp_app2_sddp}\\
\underline F_{ti}^{k-1}(x) \le  \underline F_{ti}^{k}(x) \le F_{ti}(x), \forall x \in \bar {\cal X}_{t}(\bar \epsilon_t), t=1, \ldots, T, i =1, \ldots, N_t. \label{cpp_app_bnd_tildef_sddp}
\end{align}
\end{lemma}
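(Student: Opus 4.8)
The plan is to mirror the proof of Lemma~\ref{lem_bounding} for DDP, adapting it to account for the averaging over the $N_t$ subproblems at each stage. First I would observe that, exactly as in \eqnok{cpp_app_bnd_tildef}, the inequalities in \eqnok{cpp_app_bnd_tildef_sddp} follow immediately from those in \eqnok{cp_app2_sddp}: since $F_{ti}(x) = H_t(x,\tilde c_{ti}) + \lambda V_{t+1}(x)$ and $\underline F_{ti}^k(x) = H_t(x,\tilde c_{ti}) + \lambda \underline V_{t+1}^k(x)$ (by \eqnok{define_value_function_sddp} and \eqnok{def_x_k_t_sddp}), adding $H_t(x,\tilde c_{ti})$ and scaling by $\lambda$ transfers the sandwich $\underline V_{t+1}^{k-1} \le \underline V_{t+1}^k \le V_{t+1}$ evaluated at $x$ directly to $F$. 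Likewise the first relation $\underline V_t^{k-1}(x) \le \underline V_t^k(x)$ in \eqnok{cp_app2_sddp} is immediate from the max in \eqnok{def_tilde_f_sddp}. So the substance is the chain $\underline V_t^k(x) \le \tfrac{1}{N_t}\tsum_{j} \tilde\nu_{tj}^k(x) \le V_t(x)$ on $\bar{\cal X}_{t-1}(\bar\epsilon_{t-1})$, which I would prove by backward induction on $t$ from $T$ to $2$ at a fixed iteration $k$.

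For the base case $t = T$: since $\underline V_{T+1}^k \equiv 0 = V_{T+1}$, comparing \eqnok{def_lb_k_sddp} with \eqnok{define_value_function_sddp} gives $\tilde\nu_{Ti}^k(x) = \nu_{Ti}(x)$ for every $i$ and every $x$, hence $\tfrac{1}{N_T}\tsum_j \tilde\nu_{Tj}^k(x) = V_T(x)$; and the cut $\tilde V_T^k + \langle (\tilde V_T^k)', x - x_{T-1}^k\rangle$ is a supporting hyperplane at $x_{T-1}^k$ of the convex function $x \mapsto \tfrac{1}{N_T}\tsum_j \tilde\nu_{Tj}^k(x)$ — here I use that $(\tilde V_T^k)' = \tfrac{1}{N_T}\tsum_j [\tilde B_{Tj},\tilde Q_{Tj}]y_{Tj}^k$ is the average of the subgradients of the individual $\tilde\nu_{Tj}^k$ obtained from the optimal dual multipliers, which is a subgradient of the average, valid over all of $\bar{\cal X}_{T-1}(\bar\epsilon_{T-1})$ because Assumption~\ref{assum2_complete_m} guarantees the Slater condition throughout this neighborhood. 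Taking the max with $\underline V_T^{k-1}$, which is $\le V_T$ by the induction on $k$ (or $-\infty$ when $k=1$), yields $\underline V_T^k(x) \le \tfrac{1}{N_T}\tsum_j\tilde\nu_{Tj}^k(x) = V_T(x)$.

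For the inductive step, assume $\underline V_t^k(x) \le V_t(x)$ on $\bar{\cal X}_{t-1}(\bar\epsilon_{t-1})$; plugging this into the definitions of $\nu_{(t-1)i}$ in \eqnok{define_value_function_sddp} and of $\tilde\nu_{(t-1)i}^k$ in \eqnok{def_lb_k_sddp} (both minimize over the same feasible set $X_{t-1}(x_{t-2}^k,\tilde\xi_{(t-1)i})$, with objectives differing only by $\lambda(\underline V_t^k - V_t) \le 0$) gives $\tilde\nu_{(t-1)i}^k(x) \le \nu_{(t-1)i}(x)$ for each $i$, hence $\tfrac{1}{N_{t-1}}\tsum_j\tilde\nu_{(t-1)j}^k(x) \le V_{t-1}(x)$. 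Then, as in the base case, $\tilde V_{t-1}^k + \langle(\tilde V_{t-1}^k)', x - x_{t-2}^k\rangle$ supports the average $\tfrac{1}{N_{t-1}}\tsum_j\tilde\nu_{(t-1)j}^k$ at $x_{t-2}^k$, and combining with $\underline V_{t-1}^{k-1} \le V_{t-1}$ through the max in \eqnok{def_tilde_f_sddp} closes the induction. The main obstacle — and the one genuinely new ingredient relative to Lemma~\ref{lem_bounding} — is justifying the averaged-subgradient step: one must verify that averaging the per-scenario supporting hyperplanes produces a valid global underestimator of the averaged value function $V_{t-1}(\cdot)$ rather than merely of its empirical mean at the single point $x_{t-2}^k$, which is exactly where the convexity of each $\tilde\nu_{(t-1)j}^k$ and the extended relatively complete recourse of Assumption~\ref{assum2_complete_m} are used. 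Everything else is a routine transcription of the DDP argument with $\tfrac{1}{N_t}\tsum_j$ inserted.
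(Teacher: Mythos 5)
Your proposal is correct and follows essentially the same route as the paper: the paper proves this lemma by repeating the backward induction of Lemma~\ref{lem_bounding}, with the key display replaced by the averaged chain $\underline V_{t-1}^k(x) = \tfrac{1}{N_{t-1}}\tsum_j [\tilde \nu_{(t-1)j}^k(x_{t-2}^k) + \langle (\tilde \nu_{(t-1)j}^k)'(x_{t-2}^k), x - x_{t-2}^k\rangle] \le \tfrac{1}{N_{t-1}}\tsum_j \tilde \nu_{(t-1)j}^k(x) \le \tfrac{1}{N_{t-1}}\tsum_j \nu_{(t-1)j}(x) = V_{t-1}(x)$, which is precisely your averaged-subgradient step. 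Your write-up just spells out the details the paper skips.
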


\begin{proof}
The proof is similar to that of Lemma~\ref{lem_bounding}.
The major difference exists in that \eqnok{lb_k_t_rel} will be replaced by
\begin{align*}
\underline V_{t-1}^k(x) &=\tfrac{1}{N_{t-1}} \tsum_{j=1}^{N_{t-1}}  
\left[\tilde \nu_{(t-1)j}^1(x_{t-2}^k)  + \langle (\tilde \nu_{(t-1)j}^k)'(x_{t-2}^k), x - x_{t-2}^k \rangle\right] \nn\\
&\le \tfrac{1}{N_{t-1}} \tsum_{j=1}^{N_{t-1}} \tilde \nu_{(t-1)j}^k(x) \le  \tfrac{1}{N_{t-1}} \tsum_{j=1}^{N_{t-1}}  \nu_{(t-1)j}^k(x) \nn\\
&= V_{t-1}(x),  
\end{align*}
and hence we skip the details.
\end{proof}

\vgap

In order to establish the complexity of the EDDP Algorithm, 
we need to show that the approximation functions $\underline F_{ti}^k(\cdot)$ 
are Lipschitz continuous on ${\cal X}_{ti}$. For convenience,
we still use $\underline M_t$ to denote the Lipschitz constants for $\underline F_{ti}^k$.
We skip its proof since it
is similar to that of Lemma~\ref{lemma_lip} 
after replacing Assumption~\ref{assum2_complete} with Assumption~\ref{assum2_complete_m}.

\begin{lemma} \label{lemma_lip_sddp}
For any $t \ge 1$ and $i=1, \ldots, N_t$, there exists $\underline M_t \ge 0$ s.t.
\beq  \label{Lips_underline_f_sddp}
|\underline F_{ti}^k(x_t) - \underline F_{ti}^k(x'_t)| \le \underline M_t \| x_t - x'_t\|, \ \ \forall x_t, x'_t \in \bar {\cal X}_{t}(\bar \epsilon_t) \ \forall \ k \ge 1. 
\eeq
\end{lemma}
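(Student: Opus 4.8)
The plan is to mirror the proof of Lemma~\ref{lemma_lip}, adapting it to the SAA setting where each stage $t$ has $N_t$ subproblems and the cutting plane model $\underline V_{t+1}^k$ is an average of affine pieces rather than a single max. First I would observe that, by Assumption~\ref{assum2_complete_m} (specifically \eqnok{suff_cond_m2}), for every $x \in \bar{\cal X}_t(\bar\epsilon_t)$ the feasible regions $X_{t+1}(x,\tilde\xi_{(t+1)i})$ have nonempty relative interior, so that at every previous iteration $i'\le k$ the optimal dual multipliers $y_{(t+1)j}^{i'}$ exist, the quantities $\tilde\nu_{(t+1)j}^{i'}(x_t^{i'})$ and the vectors $[\tilde B_{(t+1)j},\tilde Q_{(t+1)j}]y_{(t+1)j}^{i'}$ are well-defined, and hence the averaged cutting plane
\[
\underline V_{t+1}^k(x) = \max_{i'=1,\ldots,k} \left\{ \tfrac{1}{N_{t+1}}\tsum_{j=1}^{N_{t+1}}\left[\tilde\nu_{(t+1)j}^{i'}(x_t^{i'}) + \langle [\tilde B_{(t+1)j},\tilde Q_{(t+1)j}]y_{(t+1)j}^{i'}, x - x_t^{i'}\rangle\right]\right\}
\]
is a well-defined, finite-valued, subdifferentiable convex function on $\bar{\cal X}_t(\bar\epsilon_t)$. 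Consequently $\underline F_{ti}^k(x) = H_t(x,\tilde c_{ti}) + \lambda\underline V_{t+1}^k(x)$ is subdifferentiable on ${\cal X}_{ti}$ by convexity of $H_t$.

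Next I would bound the subgradients of $\underline F_{ti}^k$ on ${\cal X}_{ti}$ exactly as in Lemma~\ref{lemma_lip}: for any $x_0 \in {\cal X}_{ti}$ and any $x \in {\cal X}_{ti}(\bar\epsilon_t)$, convexity gives $\langle (\underline F_{ti}^k)'(x_0), x - x_0\rangle \le \underline F_{ti}^k(x) - \underline F_{ti}^k(x_0) \le F_{ti}(x) - \underline F_{ti}^1(x_0)$, where the second inequality uses \eqnok{cpp_app_bnd_tildef_sddp}. Choosing $x = x_0 + \bar\epsilon_t (\underline F_{ti}^k)'(x_0)/\|(\underline F_{ti}^k)'(x_0)\|_*$ — which lies in ${\cal X}_{ti}(\bar\epsilon_t)$ by the definition of the neighborhood — and dividing through yields
\[
\|(\underline F_{ti}^k)'(x_0)\|_* \le \tfrac{1}{\bar\epsilon_t}\left[\max_{x\in{\cal X}_{ti}(\bar\epsilon_t)} F_{ti}(x) - \min_{x\in{\cal X}_{ti}} \underline F_{ti}^1(x)\right].
\]
The right-hand side is finite because $F_{ti}$ is finite-valued and continuous on the bounded set ${\cal X}_{ti}(\bar\epsilon_t)$ (Assumptions~\ref{assum2_bndX_m} and~\ref{assum2_complete_m}) and $\underline F_{ti}^1$ is continuous on the bounded set ${\cal X}_{ti}$; it is also independent of $k$ and of $x_0$. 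Taking the supremum over $i$ and calling the resulting constant $\underline M_t$, the Lipschitz bound \eqnok{Lips_underline_f_sddp} follows from the Cauchy–Schwarz inequality together with convexity, namely $|\underline F_{ti}^k(x_t) - \underline F_{ti}^k(x'_t)| \le \max\{\|(\underline F_{ti}^k)'(x_t)\|_*, \|(\underline F_{ti}^k)'(x'_t)\|_*\}\|x_t - x'_t\|$.

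I expect the only genuine subtlety — and it is mild — to be checking that all of the supporting-hyperplane data from previous iterations remains valid on the enlarged domain $\bar{\cal X}_t(\bar\epsilon_t)$ rather than just at the single point $x_{t-1}^{i'}$ where it was generated; this is precisely what the relative-interior condition \eqnok{suff_cond_m2} buys us, and it is the reason the bound in the statement is uniform in $k$. Everything else is a routine transcription of Lemma~\ref{lemma_lip} with $f$, $v$, $X_t$ replaced by $F_{ti}$, $V_t$, $X_t(\cdot,\tilde\xi_{ti})$ and with the averaging over $j=1,\ldots,N_{t+1}$ absorbed harmlessly into the convexity arguments, since an average of finitely many subdifferentiable convex functions is again subdifferentiable. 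Hence the proof can legitimately be stated as "similar to that of Lemma~\ref{lemma_lip} after replacing Assumption~\ref{assum2_complete} with Assumption~\ref{assum2_complete_m}," as the paper does.
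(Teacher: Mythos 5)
Your proposal is correct and is essentially the proof the paper intends: the paper itself omits the argument, saying only that it parallels Lemma~\ref{lemma_lip} with Assumption~\ref{assum2_complete} replaced by Assumption~\ref{assum2_complete_m}, and your transcription (dual multipliers well-defined by \eqnok{suff_cond_m2}, subgradient bound via \eqnok{cpp_app_bnd_tildef_sddp} and the $\bar\epsilon_t$-perturbation, then Cauchy--Schwarz, with the averaging over $j$ handled by convexity) is exactly that adaptation. The only cosmetic mismatch is that, like the paper's own Lemma~\ref{lemma_lip}, your subgradient bound is stated on ${\cal X}_{ti}$ while \eqnok{Lips_underline_f_sddp} is claimed on the slightly larger set $\bar{\cal X}_t(\bar\epsilon_t)$, which is the same level of informality as the original.
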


\vgap

Below we describe some basic properties about the saturation of
search points.

\begin{lemma} \label{lem:last_stage_sddp}
Any search point $x_{T-1}^k$
generated for the $(T-1)$-th stage in EDDP must be $0$-saturated for any $k \ge 1$.
\end{lemma}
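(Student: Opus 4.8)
The plan is to mirror the proof of Lemma~\ref{lem:last_stage} for the single-scenario DDP method, the only new ingredient being that the stage-$T$ cut in EDDP is an average of $N_T$ subproblem values rather than a single value. Concretely, I would show that the cutting plane model $\underline V_T^k$ agrees \emph{exactly} with the value function $V_T$ at the point $x_{T-1}^k$; by Definition~\eqref{def_saturation_sddp} with $\epsilon_{T-1}=0$ this is precisely the assertion that $x_{T-1}^k$ is $0$-saturated.

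First I would use the initialization in Algorithm~\ref{algo_sddp}, namely $\underline V_{T+1}^k(x)\equiv 0$ for every $k\ge 1$, together with $V_{T+1}(x_T)\equiv 0$ from \eqref{Defi_sto_V1_m_SAA}. Substituting these into the definitions \eqref{def_lb_k_sddp} and \eqref{define_value_function_sddp}, respectively, shows that for each $i=1,\dots,N_T$ the approximate and true subproblem values coincide: $\tilde\nu_{Ti}^k(x_{T-1}^k)=\min_{x\in X_T(x_{T-1}^k,\tilde\xi_{Ti})}H_T(x,\tilde c_{Ti})=\nu_{Ti}(x_{T-1}^k)$. Averaging over $i$ and invoking \eqref{def_tilde_f_sddp0} then gives $\tilde V_T^k=\tfrac{1}{N_T}\tsum_{j=1}^{N_T}\tilde\nu_{Tj}^k(x_{T-1}^k)=\tfrac{1}{N_T}\tsum_{j=1}^{N_T}\nu_{Tj}(x_{T-1}^k)=V_T(x_{T-1}^k)$.

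Next I would evaluate the cut produced in the backward phase at the very point $x_{T-1}^k$ at which it was generated: in \eqref{def_tilde_f_sddp} the affine term $\langle(\tilde V_T^k)',\,x-x_{T-1}^k\rangle$ vanishes at $x=x_{T-1}^k$, so $\underline V_T^k(x_{T-1}^k)\ge \tilde V_T^k=V_T(x_{T-1}^k)$. On the other hand, since $x_{T-1}^k\in\bar{\cal X}_{T-1}\subseteq\bar{\cal X}_{T-1}(\bar\epsilon_{T-1})$, relation \eqref{cp_app2_sddp} yields $\underline V_T^k(x_{T-1}^k)\le V_T(x_{T-1}^k)$. Combining the two inequalities forces $\underline V_T^k(x_{T-1}^k)=V_T(x_{T-1}^k)$, i.e. $V_T(x_{T-1}^k)-\underline V_T^k(x_{T-1}^k)=0\le 0$, which is exactly \eqref{def_saturation_sddp} with $\epsilon_{T-1}=0$.

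I do not anticipate a genuine obstacle here; the argument is a transcription of the DDP case with the single last-stage subproblem replaced by the $N_T$-term average. The only points requiring a little care are (i) observing that $\underline V_{T+1}^k$ is \emph{identically} zero, so no approximation error enters at stage $T$, and (ii) evaluating the newly added cut at the point where it was built, so that the subgradient term drops out. Both are immediate from the statement of Algorithm~\ref{algo_sddp}.
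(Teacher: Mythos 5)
Your proof is correct and follows essentially the same route as the paper: both argue that $\tilde\nu_{Ti}^k=\nu_{Ti}$ because $\underline V_{T+1}^k\equiv 0$, evaluate the newly added cut at $x_{T-1}^k$ (where the subgradient term vanishes) to get $\underline V_T^k(x_{T-1}^k)\ge V_T(x_{T-1}^k)$, and combine with \eqref{cp_app2_sddp} to conclude equality, i.e.\ $0$-saturation.
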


\begin{proof}
Note that by \eqnok{cp_app2_sddp}, we have
$\underline V_{T}^k(x_{T-1}^k) \le V(x_{T-1}^k)$.
Moreover, by \eqnok{def_tilde_f_sddp},
\begin{align*}
\underline V_{T}^k(x_{T-1}^k) &\ge \tfrac{1}{N_T} \sum_{j=1}^{N_t} \left[\tilde \nu_{tj}^k(x_{t-1}^k) + \langle (\tilde \nu_{tj}^k)'(x_{t-1}^k), \tilde \xi_{tj}),
x_{T-1}^k - x_{t-1}^k \rangle  \right] \\
&= \tfrac{1}{N_T} \tsum_{j=1}^{N_t} \tilde \nu_{tj}^k(x_{t-1}^k)  = \tfrac{1}{N_T} \tsum_{j=1}^{N_t} \nu_{tj}(x_{t-1}^k)   \\
&= V(x_{T-1}^k)
\end{align*}
where the second-to-last equality follows from the fact that $v_{T+1}^k = 0$ and the definitions
of $\nu_{Tj}(x)$ and $\tilde \nu_{Tj}^k(x)$
in \eqnok{define_value_function_sddp} and \eqnok{def_lb_k_sddp}.
Therefore we must have $\underline V_{T}^k(x_{T-1}^k) = V(x_{T-1}^k)$,
which, in view of \eqnok{def_saturation_sddp}, 
implies that $x_T^k$ is $0$-saturated.
\end{proof}

\vgap

We now generalize the result in Proposition~\ref{prop:saturation}
for the DDP method to relate the saturation of search points
across two consecutive stages in the EDDP method.

\begin{proposition} \label{prop:saturation_sddp}
Assume that $\delta_t \in [0, +\infty)$ for $t=1, \ldots, T$ are given and that 
$\epsilon_t$ are defined recursively according to \eqnok{gap_reducation}
for some given $\epsilon_{T-1} >0$. Also
let $g_t^k(\cdot)$ be defined in \eqnok{def_gap_k_t} and assume that $x_t^k$
is chosen such that
\[
g_t^k(x_t^k) = \max\limits_{i=1, \ldots, N_t} g_{t}^k(\tilde x_{ti}^k).
\]
\begin{itemize}
\item [a)] If $g_t^k(x_t^k) \le \delta_t$, $t =2, \ldots, T-1$, then
we have 
\beq \label{gap_reduction_sddp}
F_{ti}(\tilde x_{ti}^k) - \underline F_{ti}^{k-1}(\tilde x_{ti}^k)
= \lambda [V_{t+1}(\tilde x_{ti}^k)  - \underline V_{t+1}^{k-1}(\tilde x_{ti}^k)] 
\le \epsilon_{t-1}.
\eeq
Moreover, for any $T \ge 2$, we have
\beq \label{saturation_cond_sddp}
V_{t}(x^k_{t-1})  - \underline V_{t}^k(x^k_{t-1}) \le  \epsilon_{t-1}.
\eeq
where $\epsilon_{t-1}$ is defined \eqnok{gap_reducation}.

\item [b)] $S_t^k$, $t=1, \ldots, T-1$, contains all the $\epsilon_{t}$-saturated search points 
at stage $t$ generated by the algorithm up to the $k$-th iteration.
\end{itemize}
\end{proposition}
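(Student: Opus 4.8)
The plan is to prove a) and b) simultaneously by induction on the iteration index $k$, with a) a direct transcription of Proposition~\ref{prop:saturation} to the $N_t$-subproblem setting and b) the bookkeeping statement that feeds the induction for a). For a), fix a stage $t\in\{2,\ldots,T-1\}$ with $g_t^k(x_t^k)\le\delta_t$. Since line~\ref{line_aggressive} picks $x_t^k$ so that $g_t^k(x_t^k)=\max_i g_t^k(\tilde x_{ti}^k)$, \emph{every} $\tilde x_{ti}^k$ lies within $\delta_t$ of some $s_i\in S_t^{k-1}$, and by the inductive hypothesis b) at iteration $k-1$ the point $s_i$ is $\epsilon_t$-saturated. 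Using $\tilde x_{ti}^k\in\Argmin\{\underline F_{ti}^{k-1}(x):x\in X_t(x_{t-1}^k,\tilde\xi_{ti})\}$ together with the monotonicity $\underline F_{ti}^{k-1}\ge\underline F_{ti}^{j_i}$ (the analogue of \eqref{cpp_app_bnd_tildef_sddp}, where $j_i$ is the iteration at which $s_i$ was recorded), I would bound $F_{ti}(\tilde x_{ti}^k)-\underline F_{ti}^{k-1}(\tilde x_{ti}^k)$ by the same three-term split as in \eqref{temp_rel4}: a term $\le M_t\|\tilde x_{ti}^k-s_i\|$ from Assumption~\ref{assum2_m}, a term $=\lambda[V_{t+1}(s_i)-\underline V_{t+1}^{j_i}(s_i)]\le\lambda\epsilon_t$ from the saturation of $s_i$, and a term $\le\underline M_t\|\tilde x_{ti}^k-s_i\|$ from Lemma~\ref{lemma_lip_sddp}; the recursion $\epsilon_{t-1}=(M_t+\underline M_t)\delta_t+\lambda\epsilon_t$ then yields \eqref{gap_reduction_sddp}. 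For \eqref{saturation_cond_sddp} I would average over $i$: feasibility of $\tilde x_{ti}^k$ for the $i$-th stage-$t$ subproblem at $x_{t-1}^k$ gives $\nu_{ti}(x_{t-1}^k)\le F_{ti}(\tilde x_{ti}^k)$, while \eqref{def_tilde_f_sddp0}--\eqref{def_tilde_f_sddp} and $\underline F_{ti}^k\ge\underline F_{ti}^{k-1}$ give $\underline V_t^k(x_{t-1}^k)\ge\tfrac1{N_t}\sum_i\underline F_{ti}^{k-1}(\tilde x_{ti}^k)$; subtracting and invoking \eqref{gap_reduction_sddp} termwise produces $V_t(x_{t-1}^k)-\underline V_t^k(x_{t-1}^k)\le\epsilon_{t-1}$.

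For b), I would close the induction by checking that line~\ref{def_sat_set} records precisely the $\epsilon_t$-saturated forward-path points. The inclusion needed to run part a) — that every element of $S_t^k$ is $\epsilon_t$-saturated — is immediate: a point $x_{t-1}^k$ enters $S_{t-1}^k$ only when $g_t^k(x_t^k)\le\delta_t$, in which case \eqref{saturation_cond_sddp} certifies its $\epsilon_{t-1}$-saturation; and for the last stage Lemma~\ref{lem:last_stage_sddp} supplies $0$-saturation while $g_T^k\equiv 0\le\delta_T$ always fires the insertion of $x_{T-1}^k$. For the reverse inclusion — that no $\epsilon_t$-saturated search point is missed — one must show that if a forward-path point $x_t^k$ is $\epsilon_t$-saturated at its generating iteration, then the downstream test $g_{t+1}^k(x_{t+1}^k)\le\delta_{t+1}$ necessarily holds, since that is exactly the trigger recording $x_t^k$ in line~\ref{def_sat_set}; for $t=T-1$ this is automatic, and for $t\le T-2$ it rests on the calibration of $\{\epsilon_t\}$ against $\{\delta_t\}$.

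I expect part a) to be essentially routine once the role of line~\ref{line_aggressive} is pinned down — the \emph{maximal} $g_t^k$ being $\le\delta_t$ is what forces \emph{all} the $\tilde x_{ti}^k$ to sit near a recorded, hence saturated, point — so the estimate reduces to \eqref{temp_rel4} plus a convexity-averaging step, the only genuinely new input being the uniform-in-$i$ Lipschitz constant $\underline M_t$ from Lemma~\ref{lemma_lip_sddp}. The real obstacle is the reverse inclusion in b): one has to exclude the possibility that a search point attains $\epsilon_t$-saturation without the explorative rule flagging it, and here the inter-stage coupling (rather than a purely stage-by-stage argument) must be handled carefully, exploiting that the distinguishability radii $\delta_t$ and the accumulated tolerances $\epsilon_t$ are matched through the recursion $\epsilon_{t-1}=(M_t+\underline M_t)\delta_t+\lambda\epsilon_t$.
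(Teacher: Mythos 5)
Your part a) reproduces the paper's proof essentially verbatim: maximality of $g_t^k(x_t^k)$ in Line~\ref{line_aggressive} forces $g_t^k(\tilde x_{ti}^k)\le\delta_t$ for every $i$ (with $S_t^{k-1}\neq\emptyset$, since otherwise $g_t^k=+\infty$), the three-term split with $M_t$, $\underline M_t$ and the saturation of the nearby recorded point gives \eqnok{temp_rel4_sddp} and hence \eqnok{gap_reduction_sddp}, and the averaging step --- feasibility of $\tilde x_{ti}^k$ giving $\nu_{ti}(x_{t-1}^k)\le F_{ti}(\tilde x_{ti}^k)$ together with $\underline V_t^k(x_{t-1}^k)\ge\tfrac{1}{N_t}\tsum_i \underline F_{ti}^{k-1}(\tilde x_{ti}^k)$ from \eqnok{def_tilde_f_sddp} and \eqnok{cpp_app_bnd_tildef_sddp} --- yields \eqnok{saturation_cond_sddp}; the induction on $k$, with saturation of the points already in $S_t^{k-1}$ as the hypothesis and the base case $t=T-1$ handled via Lemma~\ref{lem:last_stage_sddp} and $g_T^k\equiv 0$, is also how the paper organizes the argument. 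The only divergence is your closing claim that a ``reverse inclusion'' for part b) --- that every point satisfying the saturation inequality is necessarily recorded --- is a genuine remaining obstacle requiring a calibration of $\{\delta_t\}$ against $\{\epsilon_t\}$. The paper neither proves nor uses such a statement: its proof of b) is exactly the observation you already made, namely that the insertion condition in Line~\ref{def_sat_set} coincides with the condition $g_t^k(x_t^k)\le\delta_t$ under which the analysis certifies $\epsilon_{t-1}$-saturation of $x_{t-1}^k$, so every point whose saturation the induction establishes is recorded, and every recorded point is genuinely saturated. That operational reading is all the later development needs: the induction in a) only uses that elements of $S_t^{k-1}$ are saturated, and the counting argument preceding Theorem~\ref{the_main_EDDP} only uses that when $g_t^k(x_t^k)\le\delta_t$ and $g_{t-1}^k(x_{t-1}^k)>\delta_{t-1}$ the newly certified distinguishable point enters $S_{t-1}^k$. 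A point that happened to satisfy $V_{t+1}(x_t^k)-\underline V_{t+1}^k(x_t^k)\le\epsilon_t$ without the trigger firing would indeed be missed, but no result in the paper relies on capturing such points, so the extra argument you anticipate is not needed (and, as literally stated, is not available); beyond the recursion \eqnok{gap_reducation} no further matching of the two tolerance sequences is required.
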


\begin{proof}
We prove the results by induction.
First note that by  \eqnok{def_gap_k_t} we have $g_T^k(x_T^k) =0$.
Moreover, by Lemma~\ref{lem:last_stage_sddp},
any search point $x_{T-1}^k$ will be $0$-saturated and hence part a) holds 
with $\epsilon_{T-1} =0$ for $t = T-1$.
Moreover, in view of Line~\ref{def_sat_set} of Algorithm~\ref{algo_sddp}
and the fact $g_T^k(x_T^k) =0$, $S_{T-1}^k$ contains
all the $0$-saturated search point obtained for stage $T-1$ and hence part b) holds
for $t = T-1$. 

Now assume that $g_t^k(x_t^k) \le \delta_t$ for the $t$-th stage for some $t \le T-1$.
In view of this assumption and the definition of $x_t^k$, 
we have 
\[
g_{t}^k(\tilde x_{ti}^k) = \min_{s \in S_t^{k-1}} \|s -  \tilde x_{ti}^k\| \le \delta_t
\]
for any $i = 1, \ldots, N_t$. Note that we must have $S_t^{k-1} \neq \emptyset$ since otherwise $g_{t}^k(\tilde x_{ti}^k) = + \infty$.
Hence, there exists  $x_t^{j_i} \in S_t^{k-1}$ for some $j_i < k-1$ such that
\begin{align}
\|x_t^{j_i} -  \tilde x_{ti}^k\| &\le \delta_t, \label{sddp_closeness}\\
V_{t+1}(x_t^{j_i}) -  \underline V_{t+1}^{j_i}(x_t^{j_i}) &\le \epsilon_t, \label{eq:existing_sat_sddp}
\end{align}
for any $t = 1, \ldots, N_t$.

Observe that by the definition fo $x_{ti}^k$ in \eqnok{def_x_k_t_sddp} and the first relation in \eqnok{cpp_app_bnd_tildef_sddp},
we have
\begin{align}
F_{ti}(\tilde x_{ti}^k) - \min_{x \in X_{ti}(x_{t-1}^k)}  \underline F_{ti}^{k-1} (x) &= F_{ti}(\tilde x_{ti}^k) - \underline F_{ti}^{k-1}(\tilde x_{ti}^k) \nn \\
&\le F_{ti}(\tilde x_{ti}^k) - \underline F_{ti}^{j_i}(\tilde x_{ti}^k). \label{temp_rel3}
\end{align}
Moreover, by \eqnok{Lips_f_sddp} and \eqnok{Lips_underline_f_sddp}, we have
\[
|F_{ti}(\tilde x_{ti}^k) - F_{ti}(x_t^{j_i}) | \le M_t \|\tilde x_{ti}^k - x_t^{j_i}\| \ \ \mbox{and} \ \
|\underline F_{ti}^{j_i}(\tilde x_{ti}^k) - \underline F_{ti}^{j_i}(x_t^{j_i})| \le \underline M_t \|\tilde x_{ti}^k - x_t^{j_i}\|.
\]
In addition, it follows from the definitions of $F_{ti}$ and $\underline F_{ti}^k$ 
(c.f. \eqnok{define_value_function_sddp} and \eqnok{def_x_k_t_sddp}) and \eqnok{eq:existing_sat_sddp} that
\begin{align*}
F_{ti}(x^{j_i}_{t}) -  \underline F_{ti}^{j_i}(x^{j_i}_{t})
&= \lambda[V_{t+1}(x^{j_i}_{t})  - \underline V_{t+1}^{j_i}(x^{j_i}_{t})] \le  \lambda \epsilon_{t}.
\end{align*}
Combining the previous observations and \eqnok{eq:existing_sat_sddp}, 
we have
\begin{align}
&F_{ti}(\tilde x_{ti}^k) - \underline F_{ti}^{k-1}(\tilde x_{ti}^k) \nn\\
&\le [F_{ti}(\tilde x_{ti}^k) - F_{ti}(x_t^{j_i})]
+ [F_{ti}(x_t^{j_i}) - \underline F_{ti}^{j_i}(x_t^{j_i})] + [\underline F_{ti}^{j_i}(x_t^{j_i}) - \underline F_{ti}^{j_i}(\tilde x_{ti}^k)] \nn \\
&\le (M_t+\underline M_t) \|\tilde x_{ti}^k - x_t^{j_i}\| + \lambda \epsilon_t \nn \\
&\le (M_t+\underline M_t) \delta_t + \lambda \epsilon_t = \epsilon_{t-1},
 \label{temp_rel4_sddp}
\end{align}
where the last inequality follows from
the definition of $\epsilon_{t-1}$ in \eqnok{gap_reducation}.
The above result, in view of the definitions of $F_{ti}$ and $\underline F_{ti}^k$,
then implies \eqnok{gap_reduction_sddp}.

We will now show that the search point $x_{t-1}^k$ in the preceding stage $t-1$ must also be $\epsilon_{t-1}$-saturated at iteration $k$.
Note that $\tilde x_{ti}^k$
are feasible solutions for the $t$-th stage problem and hence that the function value
$F_{ti}(\tilde x_{ti}^k)$ 
must be greater than the optimal value $\nu_{ti}(x^k_{t-1})$ defined in \eqnok{define_value_function_sddp}.
Using this observation, we have
\begin{align}
V_{t}(x^k_{t-1}) - \underline V_{t}^k(x^k_{t-1})
&= \tfrac{1}{N_t}\tsum_{i=1}^{N_t} \nu_{ti}(x^k_{t-1}) - \underline V_{t}^k(x^k_{t-1}) \nn\\
&\le \tfrac{1}{N_t}\tsum_{i=1}^{N_t} F_{ti}(\tilde x_{ti}^k) - \underline V_{t}^k(x^k_{t-1}). \label{temp_rel1_sddp}
\end{align}
Moreover, using the definitions of $\underline V_{t}^k(x^k_{t-1})$ and $\tilde \nu_{ti}^k(x^k_{t-1}) $
in \eqnok{def_tilde_f_sddp} and \eqnok{def_lb_k_sddp}, the relations in \eqnok{cp_app2_sddp} and
 the fact that $\underline V_{t+1}^k(x) \ge \underline V_{t+1}^{k-1}(x)$
due to \eqnok{cpp_app_bnd_tildef_sddp}, we have
\begin{align}
\underline V_{t}^k(x^k_{t-1}) 
&= \max\{\underline V_{t}^{k-1}(x^k_{t-1}), \tfrac{1}{N_t} \tsum_{i=1}^{N_t} \tilde \nu_{ti}^k(x_{t-1}^k) \} \nn\\
&= \tfrac{1}{N_t} \tsum_{i=1}^{N_t} \tilde \nu_{ti}^k(x_{t-1}^k) \nn\\
&= \tfrac{1}{N_t} \tsum_{i=1}^{N_t} \min \left\{ \underline F_{ti}^{k}(x): x \in X_t(x_{t-1}^k, \tilde \xi_{tj})\right\} \nn \\
&\ge \tfrac{1}{N_t} \tsum_{i=1}^{N_t} \min \left\{ \underline F_{ti}^{k-1}(x): x \in X_t(x_{t-1}^k, \tilde \xi_{tj})\right\} \nn\\
&= \tfrac{1}{N_t} \tsum_{i=1}^{N_t} \underline F_{ti}^{k-1}(\tilde x_{ti}^k), \label{temp_rel2_sddp}
\end{align}
where the last identity follows from the definition of $x_t^k$ in \eqnok{def_x_k_t_sddp}.
Putting together \eqnok{temp_rel1_sddp} and \eqnok{temp_rel2_sddp}, we have
\begin{align}
V_{t}(x^k_{t-1}) - \underline V_{t}^k(x^k_{t-1}) &\le   \tfrac{1}{N_t}\tsum_{i=1}^{N_t} [F_{ti}(\tilde x_{ti}^k) - \underline F_{ti}^{k-1}(\tilde x_{ti}^k)] \nn \\
&\le \epsilon_{t-1}, \label{temp_rel3_sddp}
\end{align}
where the last inequality follows from \eqnok{temp_rel4_sddp}.
The above inequality then implies that $x^k_{t-1}$ gets saturated
at the $k$-th iteration. Moreover, the point $x^k_{t-1}$ will be added
into the set $S_{t-1}^k$ in view of the definition in Line~\ref{def_sat_set} of Algorithm~\ref{algo_sddp}. 
We have thus shown both part a) and part b).
\end{proof}

\vgap

Different from the DDP method, we do not have a convenient way to compute 
an exact upper bound on the optimal value for the general multi-stage
stochastic optimization problem.
However, we can use $g_1^k(x_1^k)$
as a termination criterion for the EDDP method. Indeed, using  \eqnok{temp_rel4_sddp} (with $t=1$ and $i = 1$)
and the fact that $N_1 = 1$, we conclude that
if $g_1^k(x_1^k) \le \delta_1$, then we must have
\beq \label{EDDP_termination}
F_{11}(x_{1}^k) - F^* \le F_{11}(x_{1}^k) - \underline F_{11}^{k-1}(x_{1}^k) \le \epsilon_0.
\eeq
It is worth noting that one can possibly provide a stochastic upper bound
on $F^*$ for solving multi-stage stochastic optimization problems.
We will discuss this idea further in Section~\ref{sec_SDDP}.

\vgap

Below we show that each iteration of the EDDP method will either find an $\epsilon_0$-solution of problem~\eqnok{multstage1_SAA},
or find a new $\epsilon_t$-saturated and $\delta_t$-distinguishable search point at some stage $t$.

\begin{proposition}
Assume that $\delta_t \in [0, +\infty)$, $t=1, \ldots, T$, are given. Also let
$\epsilon_t$, $t=0, \ldots, T$, be defined in \eqnok{eq:closeenough_dcp}.
%
Then any iteration $k$ of the EDDP method will
either generate a new $\epsilon_t$-saturated and $\delta_t$-distinguishable search point $x_t^k$  at some stage $t = 1, \ldots, T$,
or find a feasible solution $x_1^k$ of problem \eqnok{multstage1_SAA} such that
\begin{align}
F_{11}(x_1^k) - F^* &\le \epsilon_0. \label{eq:bndOptGap_sddp} 
\end{align}
\end{proposition}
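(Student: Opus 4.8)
The plan is to follow the proof of Proposition~\ref{prop_DDP_iteration} almost line for line, replacing the DDP ingredients by their EDDP analogues: Proposition~\ref{prop:saturation_sddp} for Proposition~\ref{prop:saturation}, Lemma~\ref{lem:last_stage_sddp} for Lemma~\ref{lem:last_stage}, and the termination bound \eqnok{EDDP_termination} for Lemma~\ref{lem:bnd_gap}. First I would record that the $\{\epsilon_t\}$ produced by \eqnok{eq:closeenough_dcp} satisfy the recursion $\epsilon_{t-1}=(M_t+\underline M_t)\delta_t+\lambda\epsilon_t$ with $\epsilon_{T-1}=0$, so Proposition~\ref{prop:saturation_sddp} applies with exactly this sequence. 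Then, fixing an iteration $k$, I would split into the two exhaustive cases: (i) $g_1^k(x_1^k)\le\delta_1$, so EDDP terminates at Line; or (ii) $g_1^k(x_1^k)>\delta_1$, in which case $\{i\in\{1,\dots,T-1\}:g_i^k(x_i^k)>\delta_i\}$ is nonempty and I let $t^\ast$ denote its largest element.

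In case (i), since $N_1=1$ we have $x_1^k=\tilde x_{11}^k$, and \eqnok{EDDP_termination} gives directly $F_{11}(x_1^k)-F^\ast\le\epsilon_0$, which is \eqnok{eq:bndOptGap_sddp}. I would remark here that, unlike in Proposition~\ref{prop_DDP_iteration}, no computable ``\eqnok{eq:bndGap}''-type guarantee is asserted, because in the stochastic setting there is no readily computable exact upper bound on $F^\ast$ (cf. the discussion preceding \eqnok{EDDP_termination}).

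In case (ii), by maximality of $t^\ast$ we have $g_i^k(x_i^k)\le\delta_i$ for every $i$ with $t^\ast<i\le T-1$. If $t^\ast=T-1$, then Lemma~\ref{lem:last_stage_sddp} makes $x_{T-1}^k$ $0$-saturated (hence $\epsilon_{T-1}$-saturated), while $g_{T-1}^k(x_{T-1}^k)>\delta_{T-1}$ makes it $\delta_{T-1}$-distinguishable; moreover $g_T^k(x_T^k)=0\le\delta_T$, so at the backward step $t=T$ the point $x_{T-1}^k$ is inserted into $S_{T-1}^k$. If $t^\ast<T-1$, I would apply Proposition~\ref{prop:saturation_sddp} at stage $t^\ast+1$ (whose hypotheses hold since the downstream gaps $g_i^k(x_i^k)$, $t^\ast+1\le i\le T-1$, are all at most $\delta_i$) to obtain $V_{t^\ast+1}(x_{t^\ast}^k)-\underline V_{t^\ast+1}^k(x_{t^\ast}^k)\le\epsilon_{t^\ast}$, i.e., $x_{t^\ast}^k$ is $\epsilon_{t^\ast}$-saturated; combined with $g_{t^\ast}^k(x_{t^\ast}^k)>\delta_{t^\ast}$ this yields $\delta_{t^\ast}$-distinguishability, and since $g_{t^\ast+1}^k(x_{t^\ast+1}^k)\le\delta_{t^\ast+1}$, Line~\ref{def_sat_set} inserts $x_{t^\ast}^k$ into $S_{t^\ast}^k$. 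In both sub-cases $g_{t^\ast}^k(x_{t^\ast}^k)>\delta_{t^\ast}\ge 0$ forces $x_{t^\ast}^k\notin S_{t^\ast}^{k-1}$, so the saturated point recorded is genuinely new.

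The crux is the reduction to Proposition~\ref{prop:saturation_sddp} in case (ii), and the reason it goes through is the aggressive selection in Line~\ref{line_aggressive}: because $x_t^k$ is chosen to \emph{maximize} $g_t^k(\tilde x_{ti}^k)$ over $i$, the inequality $g_t^k(x_t^k)\le\delta_t$ forces $g_t^k(\tilde x_{ti}^k)\le\delta_t$ for \emph{all} $i=1,\dots,N_t$, i.e., every one of the $N_t$ forward candidates at stage $t$ lies within $\delta_t$ of an $\epsilon_t$-saturated point of $S_t^{k-1}$. This uniform closeness is exactly what makes the per-scenario bound \eqnok{gap_reduction_sddp} hold for all $i$ and hence, after averaging as in \eqnok{temp_rel1_sddp}--\eqnok{temp_rel3_sddp}, propagate $\epsilon_{t-1}$-saturation to $x_{t-1}^k$; a random choice of $x_t^k$ would not deliver this, which foreshadows the weaker bound for SDDP in Section~\ref{sec_SDDP}.
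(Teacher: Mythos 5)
Your proof is correct and follows essentially the same route as the paper: the paper splits the iteration into $T$ cases according to the largest stage index with $g_t^k(x_t^k)>\delta_t$, handling the all-small case via \eqnok{EDDP_termination}, the last stage via Lemma~\ref{lem:last_stage_sddp}, and intermediate stages via Proposition~\ref{prop:saturation_sddp}.a), which is exactly your decomposition with $t^\ast$ (your case (i) is slightly broader than the paper's Case~1 but still valid, since the bound \eqnok{EDDP_termination} only needs $g_1^k(x_1^k)\le\delta_1$). Your added remarks on newness ($x_{t^\ast}^k\notin S_{t^\ast}^{k-1}$) and on the insertion into $S_{t^\ast}^k$ are consistent with the algorithm and with part b) of Proposition~\ref{prop:saturation_sddp}.
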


\begin{proof}
Similar to the proof of Proposition~\ref{prop_DDP_iteration}, we consider
the following $T$ cases that will happen at the $k$-th iteration of the EDDP method.
\begin{itemize}
\item [] Case $1$: $g_{t}^k(x_{t}^k) \le \delta_t$, $\forall 1 \le t \le T-1$;
\item [] Case $t$, $t=2, \ldots, T-1$: $g_{i}^k(x_{i}^k) \le \delta_i$, $\forall t \le i \le T-1$, and $g_{t-1}^k(x_{t-1}^k) > \delta_{t-1}$;
\item [] Case $T$: $g_{T-1}^k(x_{T-1}^k) > \delta_{T-1}$.
\end{itemize}
For the first case, it follows from  the assumption $g_{1}^k(x_{1}^k) \le \delta_1$ and \eqnok{EDDP_termination}  that
$x_1^k$ must be an $\epsilon_0$-solution of
problem~\eqnok{multstage1_SAA}. Now let us consider the $t$-th case for any $t = 2, \ldots, T-2$.
Since $g_{t-1}^k(x_{t-1}^k) > \delta_{t-1}$, the search point $x_{t-1}^k$ is $\delta_t$-distinguishable.
Moreover, we conclude from the assumption $g_{t}^k (x_{t}^k) \le \delta_{t}$ and
Proposition~\ref{prop:saturation_sddp}.a) that
the point $x_{t-1}^k$ must be $\epsilon_{t-1}$-saturated.
Hence, the search point $x_{t-1}^k$ is $\delta_t$-distinguishable and $\epsilon_{t-1}$-saturated
for the $t$-th case, $t = 2, \ldots, T-1$.
Finally for the $T$-th case, $x_{T-1}^k$ is $\delta_{T-1}$-distinguishable by assumption. Moreover,
by Lemma~\ref{lem:last_stage_sddp}, $x_{T-1}^k$ in the $(T-1)$-stage
will get $0$-saturated.  Hence $x_{T-1}^k$ is $\delta_{T-1}$-distinguishable and $\epsilon_{T-1}$-saturated.
The result then follows by putting all these cases together.
\end{proof}

\vgap

We are now ready to establish the complexity of the EDDP method.
For the sake of simplicity, we will fix the norm $\|\cdot\|$  to be an $l_\infty$ norm
to define the distances and Lipschitz constants at each stage $t$.

\begin{theorem} \label{the_main_EDDP}
Suppose that the norm used to define the bound on 
$D_t$ in \eqnok{bound_X_sddp} is the $l_\infty$ norm.
Also assume that $\delta_t \in [0,+\infty)$ are given and that
$\epsilon_t$ are defined in \eqnok{eq:closeenough_dcp}.
Then the number of iterations performed by the EDDP method
to find a solution satisfying 
\begin{align}
F_{11}(x_1^k) - F^* &\le \epsilon_0 \label{temination1_sddp} 
\end{align}
can be bounded by $\bar K + 1$, where
\beq \label{def_N_sddp}
\bar K:= \tsum_{t=1}^{T-1} \left( \tfrac{D_t }{\delta_t} +1 \right)^{n_t}.
\eeq
In particular, If $n_t \le n$, $D_t \le D$, $\max\{M_t,\underline M_t\} \le M$ and $\delta_t = \epsilon$ for all $t=1, \ldots, T$,
then the EDDP method will find a solution $x_1^k$ of problem \eqnok{multstage1_SAA} s.t.
\begin{align}
F_{11}(x_1^k) - F^* &\le 2 M  \min\{\tfrac{1}{1-\lambda}, T-1 \} \, \epsilon, \label{teminations_sddp} 
\end{align}
within at most $\bar K_\epsilon+1$ iterations with
\beq \label{def_N_s_sddp}
\bar K_\epsilon := (T-1)\left( \tfrac{D}{\epsilon} +1 \right)^{n}.
\eeq
\end{theorem}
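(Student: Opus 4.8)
The plan is to follow the complexity argument for DDP in Theorem~\ref{the_main} almost verbatim, the only structural change being that the (implicit) collection of saturated search points is now the explicitly maintained family $S_t^k$. First I would record the \emph{packing estimate}. Whenever the dichotomy below forces a \emph{new} $\epsilon_t$-saturated and $\delta_t$-distinguishable point $x_t^k$ to appear at some stage $t\in\{1,\dots,T-1\}$, Proposition~\ref{prop:saturation_sddp}(b) guarantees $x_t^k\in S_t^k$, and $\delta_t$-distinguishability ($g_t^k(x_t^k)>\delta_t$) means that, at the moment of its insertion, it lies at $l_\infty$-distance strictly more than $\delta_t$ from every point already present in $S_t^{k-1}$. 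An easy induction then shows that the elements of $S_t^k$ that were distinguishable when inserted are pairwise more than $\delta_t$ apart. Since by Assumption~\ref{assum2_bndX_m} the whole set $\bar{\cal X}_t$, and hence $S_t^k$, sits inside an $l_\infty$-box of side length $D_t$ in $\mathbb{R}^{n_t}$, the counting already carried out in Proposition~\ref{prop:cuttingplane} bounds the number of such points by $N_t:=(D_t/\delta_t+1)^{n_t}$, for every $k$ and every $t=1,\dots,T-1$.

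Next I would feed this into the iteration dichotomy proved in the proposition immediately preceding this theorem: at each iteration $k$ the EDDP method either returns a feasible $x_1^k$ with $F_{11}(x_1^k)-F^*\le\epsilon_0$ and stops, or it produces, at some stage $t\in\{1,\dots,T-1\}$, a new $\epsilon_t$-saturated and $\delta_t$-distinguishable search point. By the previous paragraph each non-terminating iteration thus contributes at least one fresh ``distinguishable-at-insertion'' element to some $S_t$, and the total number of such elements over all stages is at most $\tsum_{t=1}^{T-1}N_t=\bar K$. Hence there can be at most $\bar K$ non-terminating iterations, so the method stops after at most $\bar K+1$ iterations, which is \eqnok{def_N_sddp}; and when it stops, the termination test of Algorithm~\ref{algo_sddp} together with \eqnok{EDDP_termination} delivers \eqnok{temination1_sddp}.

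For the specialized statement I would substitute $n_t\le n$, $D_t\le D$ and $\delta_t=\epsilon$ to get $\bar K\le(T-1)(D/\epsilon+1)^n=\bar K_\epsilon$, which is \eqnok{def_N_s_sddp}. Since $\epsilon_t$ here is defined exactly as in \eqnok{eq:closeenough_dcp}, the bound $\epsilon_0\le 2M\min\{\tfrac{1}{1-\lambda},T-1\}\epsilon$ is available word for word from \eqnok{def_epsilon_t_precisely} in the proof of Theorem~\ref{the_main} (using $\epsilon_{T-1}=0$, $\delta_t=\epsilon$ and $\max\{M_t,\underline M_t\}\le M$), and plugging it into \eqnok{temination1_sddp} yields \eqnok{teminations_sddp}. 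The one point that needs care — and hence the ``hard part'' — is the first step: certifying that the relevant elements accumulated in $S_t^k$ are genuinely $\delta_t$-separated (not merely that $S_t^k$ is contained in a bounded box, since $S_t^k$ may also pick up clustered, non-distinguishable points through Line~\ref{def_sat_set}), so that the static box-packing count of Proposition~\ref{prop:cuttingplane} legitimately applies; I would also reiterate, as in the remarks after Theorem~\ref{the_main}, that the exponent $n_t$ may be replaced by $\dim\mathrm{Aff}(\bar{\cal X}_t)$ because the method never discretizes $\mathbb{R}^{n_t}$. Everything else is the bookkeeping already done for DDP.
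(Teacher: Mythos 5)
Your proposal is correct and follows essentially the same route as the paper's proof: count the maximum number of $\delta_t$-distinguishable (hence pairwise $\delta_t$-separated) saturated points that fit in the $l_\infty$-box of side $D_t$ at each stage, invoke the dichotomy of the preceding proposition so that every non-terminating iteration produces a fresh such point, and then specialize via the bound \eqnok{def_epsilon_t_precisely} on $\epsilon_0$. Your extra care about $S_t^k$ containing clustered, non-distinguishable points (and counting only the distinguishable-at-insertion elements, which Proposition~\ref{prop:saturation_sddp}(b) guarantees are all recorded in $S_t^k$) is a legitimate refinement of detail, not a different argument.
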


\begin{proof}
Let us count the total number of possible search points for
saturation before an $\epsilon$-optimal policy of problem~\eqnok{multstage1_SAA}
is found. Using \eqnok{eq:farenough_sddp} and
the assumption the feasible region for each stage $t$
is inside a box with side length $D_t$ (c.f., \eqnok{bound_X_sddp}),
we can see that the number of possible search points for saturation
at each stage is given by
\[
\left( \tfrac{D_t}{\delta_t} +1 \right)^{n_t}.
\]
As a consequence, the total number of
iterations that EDDP will perform before finding
an $\epsilon_0$-optimal policy will be bounded
by $\bar K + 1$.
If $n_t \le n$, $D_t \le D$, $\max\{M_t,\underline M_t\} \le M$ and $\delta_t = \epsilon$ for all $t=1, \ldots, T$,
we can obtain \eqnok{teminations_sddp} 
by using the bound \eqnok{def_epsilon_t_precisely} for $\epsilon_0$ in \eqnok{temination1_sddp}.
Moreover, the bound in \eqnok{def_N_s_sddp} follows directly from \eqnok{def_N_sddp}.
\end{proof}

\vgap

We now add some remarks about the results obtained in Theorem~\ref{the_main_EDDP}
for the EDDP method. First, comparing with the DDP method for single-scenario problems,
we can see that these two algorithms exhibit similar iteration complexity. However, the DDP method 
provides some guarantees on an easily
computable gap between the upper and lower bound.
On the other hand, we can terminate the EDDP method by using the quantity $g_1^k$.
Second, the EDDP method requires us to maintain the set of saturated search points $S_t^k$
and explicitly use the selected norm $\|\cdot\|$ to compute $g_t^k$. 
In the next section, we will discuss
a stochastic dual dynamic programming method which can address some of these 
issues associated with EDDP, by sacrificing a bit on the iteration complexity bound
in terms of its dependence on the number of scenarios $N_t$.
Third, similar to the DDP method, we can
replace $n_t$ in the complexity bound of the EDDP method 
with the dimension of the effective region $\bar {\cal X}_t$ in \eqnok{def_N_sddp}.

\section{Stochastic dual dynamic programming} \label{sec_SDDP}

In this section, we still consider the SAA problem~\eqnok{multstage1_SAA}
for multi-stage stochastic optimization and suppose that Assumptions~\ref{assum2_bndX_m}, \ref{assum2_complete_m} and \ref{assum2_m} hold
 throughout this section.
 Our goal is to establish the iteration complexity of 
 the stochastic dual dynamic programming (SDDP) for solving
 this problem. 
 
As mentioned in the previous section, when dealing with
multiple scenarios in each stage $t$, we need 
to select $x_t^k$ 
from $\tilde x_{ti}$, $i = 1, \ldots, N_t$, defined in \eqnok{def_x_k_t_sddp},
where $\tilde x_{ti}$ corresponds to a particular realization $\tilde \xi_{ti}$, $i = 1, \ldots, N_t$.
While the EDDP method chooses $x_t^k$ in an aggressive manner by selecting the most ``distinguishable"
search points, SDDP will select
$x_t^k$ from $\tilde x_{ti}$, $i = 1, \ldots, N_t$, in a randomized
manner.

 
 The SDDP method is formally described in Algorithm~\ref{algo_sddp1}.
 This method still consists of the forward phase and backward phase
 similarly to the DDP and EDDP methods. 
 On one hand, we can view DDP as a special case of SDDP
 with $N_t = 1$, $t=1, \ldots, T$. On the other hand,
 there exist a few essential differences between SDDP in Algorithm~\ref{algo_sddp1}
and EDDP in Algorithm~\ref{algo_sddp}.
First, in the forward phase of SDDP, we randomly pick up an index $i_t$ and solve problem~\eqnok{def_x_k_t_sddp1} to update
$x_t^k$. Equivalently, one can view $x_t^k$ as being randomly chosen from $\tilde x_{t i}^k$, $i = 1, \ldots, N_t$,
defined in \eqnok{def_x_k_t_sddp} for the EDDP method.
Note that we do not need to compute $\tilde x_{t i}^k$ for $i \neq i_t$, even though they will be used in the analysis of
the SDDP method. Hence, the computation of the forward path $(x_1^k, \ldots, x_T^k)$ in SDDP
is less expensive than that in EDDP.
Second, in SDDP we do not need to maintain the set of saturated search points and thus
the algorithmic scheme is much simplified. However, without these sets, we will 
not be able to compute the quantities $g_t^k$ as in Algorithm~\ref{algo_sddp} and
thus cannot perform a rigorous termination test as in EDDP. We will 
discuss later in this section how to provide a statistical upper bound by running the forward phase  a few times.
 
\begin{algorithm}
\caption{Stochastic dual dynamic programming (SDDP)}
\begin{algorithmic}[1] 

\State Set $\underline V_t^0(x) = -\infty$, $t = 2, \ldots, T$, $\underline V_{T+1}^{k} (x) = 0$, $k \ge 1$.
\For {$k =1,2,\ldots,$}

\For {$t = 1, \ldots, T$}  \Comment{Forward phase.} 

\State Pick up $i_t \equiv i_t^k$ from $\{1, 2, \ldots, N_t\}$ uniformly randomly.

\State Set
\begin{align}
x_{t}^k &\in \Argmin_{x \in X_t(x_{t-1}^k, \tilde \xi_{t{i_t}})} \left\{ \underline F_{t{i_t}}^{k-1}(x) := H_t(x, \tilde c_{t{i_t}}) + \lambda \underline V_{t+1}^{k-1} (x)\right\}.  \label{def_x_k_t_sddp1}
\end{align}
\EndFor

\vgap
%

\For {$t = T, T-1, \ldots, 2$}    \Comment{Backward phase.}

\For {$i = 1, \ldots, N_t$}

\State Set $\tilde \nu_{ti}^k(x_{t-1}^k) $ and $(\tilde \nu_{ti}^k)'(x_{t-1}^k)$ according to \eqnok{def_lb_k_sddp} and \eqnok{def_lb_k_sddp1}.

\EndFor

\State Update $\underline V_{t}^k(x)$ according to \eqnok{def_tilde_f_sddp0} and \eqnok{def_tilde_f_sddp}.

\EndFor

\EndFor
\end{algorithmic} \label{algo_sddp1}
\end{algorithm}

As mentioned earlier, our goal in this section is to solve the SAA problem in \eqnok{multstage1_SAA} instead of the 
original problem in~\eqnok{multstage}.
Hence the randomness for the SDDP method in Algorithm~\ref{algo_sddp1}
comes from the i.i.d. random selection variable $i_t^k$ only. 
The statistical analysis to relate the SAA problem in  \eqnok{multstage1_SAA} and the original problem in \eqnok{multstage}
has been extensively studied especially under the stage-wise independence assumption (e.g. \cite{Sha11}).
The separation of these two problems allows us to greatly simplify the analysis of SDDP.

Whenever the iteration index $k$ is clear from the context, we
use the short-hand  notation $i_t \equiv i_t^k$.
We also use the notation
\[
i_{[k,t]} := \{i_1^1, \ldots, i_T^1, i_1^2,\ldots, i_T^2,\ldots, \ldots, i_1^{k-1}, \ldots, i_T^{k-1}, i_1^k, \ldots, i_t^k\}
\]
to denote the sequence of random selection variables generated up to stage $t$ at the $k$-th iteration.
The notions $i_{[k,0]}$ and $i_{[k-1,T]}$ will be used interchangeably.
We use ${\cal I}_{k,t}$ to denote the sigma-algebra generated by
$i_{[k,t]}$.
It should be noted that fo any iteration $k \ge 1$, we must have $i_1^k = 1$ since the number of
scenarios $N_1 = 1$. In other words, $i_1^k$ is alway deterministic for any $k \ge 1$.

The complexity analysis
of SDDP still relies on the concept of saturation.
Let us denote
$S_t^{k-1}$ the set of saturated points in stage $t$, i.e.,
$S_t^{k-1}:=\{x_t: V_{t+1}(x_t) -  \underline V_{t+1}^{j}(x_t) \le \epsilon_t, \mbox{ for some } j \le k-1 \}$.
We still use 
$x_t^{j_i}$ for some $j_i < k-1$
to denote the closest point to $\tilde x_{t i}^k$ from the saturated points $S_t^{k-1}$, i.e.,
\begin{align}
x_t^{j_i} \in \Argmin_{s \in S_t^{k-1}} \|s - \tilde x_{ti}^k\|,\\
V_{t+1}(x_t^{j_i}) -  \underline V_{t+1}^{j_i}(x_t^{j_i}) \le \epsilon_t. \label{eq:existing_sat_sddp1}
\end{align}
In SDDP, we will explore the average distance between $\tilde x_{ti}^k$ to the set $S_t^{k-1}$
defined as follows:
\begin{align}
\tilde g_t^k &:=  \tfrac{1}{N_t}\tsum_{i=1}^{N_t} \|\tilde x_{ti}^k - x_t^{j_i}\|. \label{def_tilde_g_t}
\end{align}
Note that the search point $x_t^k$ is a function of $i_{[k,t]}$ and hence is also random.
$\tilde x_{ti}^k$ depends on $x_{t-1}^k$ (see \eqnok{def_x_k_t_sddp}) and hence on $i_{[k, t-1]}$. 
Moreover, the set of saturated points $S_t^{k-1}$ only depends on $i_{[k-1,T]}$
since it is defined in the backward phase of the previous iteration.
Hence, $\tilde g_t^k$ is measurable w.r.t. ${\cal I}_{k,t-1}$, but it is independent of the random selection variable $i_t^k$
for the current stage $t$ at the $k$-th iteration.

Lemma~\ref{avg_distance} below summarizes some important properties about $\tilde g_t^k$.

\begin{lemma} \label{avg_distance}
Let $\delta_t \in [0, +\infty)$ be given and $\epsilon_t$ be defined in \eqnok{gap_reducation}.
If $\tilde g_t^k \le \delta_t$, then we have
\beq \label{recursion_sddp1}
 \tfrac{1}{N_t}\tsum_{i=1}^{N_t} [F_{ti}(\tilde x_{ti}^k) - \underline F_{ti}^{k-1}(\tilde x_{ti}^k)]
 =  \tfrac{\lambda}{N_t}\tsum_{i=1}^{N_t} [V_{t+1}(\tilde x_{ti}^k) - \underline V_{t+1}^{k-1}(\tilde x_{ti}^k)] \le \epsilon_{t-1}.
\eeq
Moreover, for $t \ge 2$ we have
\beq \label{saturation_sddp1}
V_{t}(x^k_{t-1}) - \underline V_{t}^k(x^k_{t-1}) \le \epsilon_{t-1}.
\eeq
\end{lemma}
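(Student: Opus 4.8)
The plan is to adapt the argument of Proposition~\ref{prop:saturation_sddp}.a) to the randomized setting, the one essential difference being that in EDDP the inequality $g_t^k(x_t^k)\le\delta_t$ forces $\|\tilde x_{ti}^k-x_t^{j_i}\|\le\delta_t$ for \emph{every} scenario $i$, whereas here we are only given that the \emph{average} of these distances, $\tilde g_t^k$, is at most $\delta_t$. Accordingly I would keep every per-scenario gap estimate in the form ``(Lipschitz constant)$\,\cdot\,\|\tilde x_{ti}^k-x_t^{j_i}\|+\lambda\epsilon_t$'' and invoke the hypothesis $\tilde g_t^k\le\delta_t$ only once, after summing over $i=1,\ldots,N_t$ and dividing by $N_t$.

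For the first assertion I would start from the scenario-wise identity $F_{ti}(x)-\underline F_{ti}^{k-1}(x)=\lambda\,[V_{t+1}(x)-\underline V_{t+1}^{k-1}(x)]$, immediate from the definitions of $F_{ti}$ and $\underline F_{ti}^{k-1}$ in \eqnok{define_value_function_sddp} and \eqnok{def_x_k_t_sddp}; averaging it over $i$ gives the equality in \eqnok{recursion_sddp1}. For the inequality, note that $\tilde g_t^k\le\delta_t<+\infty$ forces $S_t^{k-1}\neq\emptyset$, so for each $i$ the closest saturated point $x_t^{j_i}$ obeying \eqnok{eq:existing_sat_sddp1} exists. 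Using the optimality of $\tilde x_{ti}^k$ in \eqnok{def_x_k_t_sddp} together with $\underline F_{ti}^{j_i}\le\underline F_{ti}^{k-1}$ (from \eqnok{cpp_app_bnd_tildef_sddp}), one has $F_{ti}(\tilde x_{ti}^k)-\underline F_{ti}^{k-1}(\tilde x_{ti}^k)\le F_{ti}(\tilde x_{ti}^k)-\underline F_{ti}^{j_i}(\tilde x_{ti}^k)$, whose right-hand side I would split as
\[
[F_{ti}(\tilde x_{ti}^k)-F_{ti}(x_t^{j_i})]+[F_{ti}(x_t^{j_i})-\underline F_{ti}^{j_i}(x_t^{j_i})]+[\underline F_{ti}^{j_i}(x_t^{j_i})-\underline F_{ti}^{j_i}(\tilde x_{ti}^k)].
\]
The outer two brackets are at most $M_t\|\tilde x_{ti}^k-x_t^{j_i}\|$ and $\underline M_t\|\tilde x_{ti}^k-x_t^{j_i}\|$ by the Lipschitz bounds \eqnok{Lips_f_sddp} and \eqnok{Lips_underline_f_sddp} (the latter from Lemma~\ref{lemma_lip_sddp}), while the middle bracket equals $\lambda[V_{t+1}(x_t^{j_i})-\underline V_{t+1}^{j_i}(x_t^{j_i})]\le\lambda\epsilon_t$ by \eqnok{eq:existing_sat_sddp1}. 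Summing over $i$, dividing by $N_t$, and using the definition \eqnok{def_tilde_g_t} of $\tilde g_t^k$ and the hypothesis then yields $\tfrac{1}{N_t}\tsum_{i=1}^{N_t}[F_{ti}(\tilde x_{ti}^k)-\underline F_{ti}^{k-1}(\tilde x_{ti}^k)]\le(M_t+\underline M_t)\tilde g_t^k+\lambda\epsilon_t\le(M_t+\underline M_t)\delta_t+\lambda\epsilon_t=\epsilon_{t-1}$, which is \eqnok{recursion_sddp1}.

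For \eqnok{saturation_sddp1} I would repeat the last step of the proof of Proposition~\ref{prop:saturation_sddp}: each $\tilde x_{ti}^k$ is feasible for the $i$-th stage-$t$ subproblem at $x_{t-1}^k$, so $F_{ti}(\tilde x_{ti}^k)\ge\nu_{ti}(x_{t-1}^k)$ and hence $V_t(x_{t-1}^k)=\tfrac{1}{N_t}\tsum_i\nu_{ti}(x_{t-1}^k)\le\tfrac{1}{N_t}\tsum_i F_{ti}(\tilde x_{ti}^k)$; on the other hand \eqnok{def_tilde_f_sddp}, \eqnok{cp_app2_sddp} and $\underline V_{t+1}^k\ge\underline V_{t+1}^{k-1}$ give $\underline V_t^k(x_{t-1}^k)=\tfrac{1}{N_t}\tsum_i\tilde\nu_{ti}^k(x_{t-1}^k)\ge\tfrac{1}{N_t}\tsum_i\underline F_{ti}^{k-1}(\tilde x_{ti}^k)$, the last inequality again by the optimality of $\tilde x_{ti}^k$ in \eqnok{def_x_k_t_sddp}. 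Subtracting and invoking \eqnok{recursion_sddp1} gives $V_t(x_{t-1}^k)-\underline V_t^k(x_{t-1}^k)\le\epsilon_{t-1}$. I expect the main (essentially the only) obstacle to be the bookkeeping of keeping every inequality at the per-scenario level so that the single averaged hypothesis $\tilde g_t^k\le\delta_t$ can be applied at the very end; structurally the proof is a transcription of the EDDP argument with maxima over $i$ replaced by averages over $i$.
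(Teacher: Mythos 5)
Your proposal is correct and follows essentially the same route as the paper: the paper's proof simply imports the per-scenario bound $F_{ti}(\tilde x_{ti}^k)-\underline F_{ti}^{k-1}(\tilde x_{ti}^k)\le (M_t+\underline M_t)\|\tilde x_{ti}^k-x_t^{j_i}\|+\lambda\epsilon_t$ from the proof of Proposition~\ref{prop:saturation_sddp}, averages over $i$ so that the hypothesis $\tilde g_t^k\le\delta_t$ is invoked only once, and then obtains \eqnok{saturation_sddp1} exactly as in \eqnok{temp_rel1_sddp}--\eqnok{temp_rel3_sddp}, which is precisely what you do (you merely write out the imported steps explicitly rather than citing them).
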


\begin{proof}
First note the second inequality in \eqnok{temp_rel4_sddp} still holds since it does not
depend on the selection of $x_t^k$. Hence we have
\[
F_{ti}(\tilde x_{ti}^k) - \underline F_{ti}^{k-1}(\tilde x_{ti}^k) \le (M_t+\underline M_t) \|\tilde x_{ti}^k - x_t^{j_i}\| + \lambda \epsilon_t.
\]
Summing up the above inequalities, we can see that
\begin{align*}
 \tfrac{1}{N_t}\tsum_{i=1}^{N_t} [F_{ti}(\tilde x_{ti}^k) - \underline F_{ti}^{k-1}(\tilde x_{ti}^k)]
&\le (M_t+\underline M_t)  \tfrac{1}{N_t}\tsum_{i=1}^{N_t} \|\tilde x_{ti}^k - x_t^{j_i}\| + \lambda \epsilon_t  \\
&= (M_t+\underline M_t)  \tilde g_t^k + \lambda \epsilon_t\\
&\le \epsilon_{t-1},
\end{align*}
which together with the definitions of $F_{ti}$ and $\underline F_{ti}^{k-1}$ then imply \eqnok{recursion_sddp1}.
Moreover, \eqnok{saturation_sddp1} follows from \eqnok{temp_rel3_sddp} and \eqnok{recursion_sddp1}.
\end{proof}

\vgap

Similar to the previous section, we use
\begin{align*}
g_t^k(x_t^k) &:= \begin{cases}
\min_{s \in S_t^{k-1}} \|s- x_t^k\|, & t < T,\\
0, & \mbox{o.w.}
\end{cases}
\end{align*}
to measure the distance between $x_t^k$ and the set of saturated points.
Clearly, $g_t^k(x_t^k)$ is a random variable dependent on $x_t^k$ and
hence measurable w.r.t. ${\cal I}_{k,t}$.
We say that $x_t^k$ is $\epsilon_t$-saturated 
if $V_{t+1}^k(x_t^k) - \underline V_{t+1}(x_t^k) \le \epsilon_t$.
Moreover, $x_t^k$ is said to be $\delta_t$-distinguishable if $g_t^k(x_t^k) > \delta_t$.

The quantitates $\tilde g_t^k$ and $\tilde g_{t+1}^k$ defined in \eqnok{def_tilde_g_t}
provide us a way to check whether $x_t^k$ is $\delta_t$-distinguishable 
and $\epsilon_t$-saturated.
More specifically,
If $\tilde g_t^k > \delta_t$ for some stage $t <T$ at iteration $k$,
then there must exist an index $i_t^* \equiv i_t^{k,*} \in \{1, \ldots, N_t\}$
s.t. $\|\tilde x_{ti_t^*}^k - x_t^{j_{i_t^*}}\| > \delta$
or equivalently $g_t^k(\tilde x_{ti_t^*}^k) > \delta_t$
 (since otherwise $\tilde g_t^k \le \delta_t$).
Note that both $\tilde g_t^k$ and $i_t^*$ are measurable w.r.t. ${\cal I}_{k,t-1}$
but independent of
the $i_t^k$. Therefore, conditioning on ${\cal I}_{k,t-1}$
the probability of 
having $i_t^k = i_t^*$ is $1/N_t$, and
 consequently by the law of total probability,
$
\prob \{x^k_{t} =  \tilde x_{t i_t^*}^k \} = 1/N_{t}.
$
Moreover, we can see that
the conditional probability of 
\begin{align}
\prob\{ g_t^k(x_t^k) > \delta_t | \tilde g_t^k > \delta_t\}
&= \tsum_{i=1}^{N_t} \tfrac{1}{N_t} \prob\{g_t^k(\tilde x_{ti}^k) > \delta_t | \tilde g_t^k > \delta_t \} \nn\\
&\ge  \tfrac{1}{N_t} \prob\{ g_t^k(\tilde x_{ti_t^*}^k) > \delta_t  |\tilde g_t^k > \delta_t\} \nn \\
&= \tfrac{1}{N_t}. \label{bnd_dist_prob}
\end{align}
In other words, if $\tilde g_t^k > \delta_t$, then with probability at least $1/N_t$, $x_t^k$ will
be $\delta_t$-distinguishable.
If, in addition, $\tilde g_{t+1}^k \le \delta_{t+1}$,
then in view of Lemma~\ref{avg_distance}, we have
$V_{t+1}^k(x_t^k) - \underline V_{t+1}(x_t^k) \le \epsilon_t$
and hence $x_t^k$ will be $\epsilon_t$-saturated.

While EDDP can find at least one new saturated and distinguishable search point in every iteration,
SDDP can only guarantee so in probability as shown in the following result.
We use the random variable $\pIter^k$ to denote whether there exists such a point among any 
stages at iteration $k$. Clearly, 
$\pIter^k$ is measurable w.r.t. ${\cal I}_{k,T}$. 

\begin{lemma} \label{lemma_prob_saturation}
Assume that $\delta_t \in [0, +\infty)$, $t=1, \ldots, T$, are given. Also let 
$\epsilon_t$, $t=0, \ldots, T$, be defined in \eqnok{eq:closeenough_dcp}.
The probability of 
finding a new $\delta_t$-distinguishable and $\epsilon_t$-saturated and  search point at
the $k$-iteration of SDDP can be bounded by
\beq \label{bnd_iter_prob_0}
\prob\{\pIter^k = 1\}  \ge \tfrac{1}{\bar N} (1- \prob\{\tilde g_{i}^k \le \delta_{i}, i=1, \ldots, T-1\}),
\eeq
where 
\beq \label{def_max_N_t}
\bar N := \textstyle \prod_{i=2}^{T-1} N_i.
\eeq
\end{lemma}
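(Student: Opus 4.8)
The plan is to lower-bound $\prob\{\pIter^k=1\}$ by exhibiting, on the event that some $\tilde g_i^k$ overshoots $\delta_i$, a family of choice-sequences of conditional probability at least $1/\bar N$ along which SDDP is forced to produce a new $\epsilon_t$-saturated, $\delta_t$-distinguishable point. Write $A:=\{\exists\, i\in\{1,\dots,T-1\}:\tilde g_i^k>\delta_i\}$, so $\prob\{A\}=1-\prob\{\tilde g_i^k\le\delta_i,\ i=1,\dots,T-1\}$ and it suffices to show $\prob\{\pIter^k=1\}\ge \tfrac1{\bar N}\,\prob\{A\}$. Since $\tilde g_s^k$ (see \eqnok{def_tilde_g_t}) is measurable with respect to ${\cal I}_{k,s-1}$, the first index $t_0:=\min\{s\in\{1,\dots,T-1\}:\tilde g_s^k>\delta_s\}$ is a stopping time for the forward filtration, i.e. $\{t_0=t\}\in{\cal I}_{k,t-1}$; as $A=\bigsqcup_{t=1}^{T-1}\{t_0=t\}$, it is enough to prove $\prob\{\pIter^k=1,\ t_0=t\}\ge\tfrac1{\bar N}\prob\{t_0=t\}$ for each $t$.

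First I would isolate two deterministic facts. On $\{\tilde g_s^k>\delta_s\}$ (an ${\cal I}_{k,s-1}$-event), the set $G_s:=\{i:g_s^k(\tilde x_{si}^k)>\delta_s\}$ is nonempty, because $\tilde g_s^k$ is the average of the $g_s^k(\tilde x_{si}^k)$; consequently $\prob\{i_s^k\in G_s\mid{\cal I}_{k,s-1}\}=|G_s|/N_s\ge1/N_s$ (the mechanism behind \eqnok{bnd_dist_prob}), and $i_s^k\in G_s$ makes $x_s^k$ be $\delta_s$-distinguishable. Second, Lemma~\ref{avg_distance} applied at stage $s+1$ says that $\tilde g_{s+1}^k\le\delta_{s+1}$ already forces $V_{s+1}(x_s^k)-\underline V_{s+1}^k(x_s^k)\le\epsilon_s$, i.e. $x_s^k$ is $\epsilon_s$-saturated; and Lemma~\ref{lem:last_stage_sddp} gives that $x_{T-1}^k$ is $0$-saturated unconditionally.

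The heart of the argument is a ``forward chase'' on $\{t_0=t\}$. Put $G_s':=G_s$ if $\tilde g_s^k>\delta_s$ and $G_s':=\{1,\dots,N_s\}$ otherwise, for $s=t,\dots,T-1$; each $G_s'$ is ${\cal I}_{k,s-1}$-measurable and nonempty, so for $\mathcal F_t:=\bigcap_{s=t}^{T-1}\{i_s^k\in G_s'\}$ a telescoped conditioning yields $\prob\{\mathcal F_t\mid{\cal I}_{k,t-1}\}\ge\prod_{s=t}^{T-1}1/N_s\ge\prod_{s=2}^{T-1}1/N_s=1/\bar N$ (using $N_1=1$). On $\{t_0=t\}\cap\mathcal F_t$ we have $\tilde g_t^k>\delta_t$, hence $i_t^k\in G_t$ and $x_t^k$ distinguishable; marching forward, as long as $\tilde g_{s+1}^k>\delta_{s+1}$ the forcing keeps $x_{s+1}^k$ distinguishable, and the first time $\tilde g_{s+1}^k\le\delta_{s+1}$ holds — or else at $s=T-1$, reached after finitely many stages — the point $x_s^k$ is simultaneously $\epsilon_s$-saturated by the two facts above. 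Thus $\{t_0=t\}\cap\mathcal F_t\subseteq\{\pIter^k=1\}$, and since $\{t_0=t\}\in{\cal I}_{k,t-1}$,
\[
\prob\{\pIter^k=1,\ t_0=t\}\ \ge\ \bbe\big[\mathbf{1}_{\{t_0=t\}}\,\prob\{\mathcal F_t\mid{\cal I}_{k,t-1}\}\big]\ \ge\ \tfrac1{\bar N}\,\prob\{t_0=t\};
\]
summing over $t=1,\dots,T-1$ gives $\prob\{\pIter^k=1\}\ge\tfrac1{\bar N}\prob\{A\}$, as required.

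The hard part is the circular dependence that the forward chase is built to defuse: whether $x_s^k$ turns out $\epsilon_s$-saturated hinges, through $\tilde g_{s+1}^k$, on the choice $i_s^k$ itself, so one cannot merely pick a distinguishable index at one stage and stop — one must keep picking distinguishable indices, stage after stage, until an automatically-saturated configuration surfaces (guaranteed by stage $T-1$ via Lemma~\ref{lem:last_stage_sddp}), paying one $1/N_s$ per stage, which is precisely what assembles the $1/\bar N$ factor. A subsidiary point to get right is the measurability bookkeeping — taking the \emph{first} (not the last) stage where $\tilde g_s^k>\delta_s$ so that $t_0$ is a forward stopping time, and checking that each $G_s'$ is ${\cal I}_{k,s-1}$-measurable so the independent uniform draw $i_s^k$ lands in it with probability at least $1/N_s$.
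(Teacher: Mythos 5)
Your proof is correct and follows essentially the same route as the paper's: the same forward chase that forces a distinguishable index at each stage (paying a factor $1/N_s$ per stage) until saturation becomes automatic, either via Lemma~\ref{avg_distance} once $\tilde g_{s+1}^k \le \delta_{s+1}$ or via Lemma~\ref{lem:last_stage_sddp} at stage $T-1$. The only difference is bookkeeping: where the paper counts the at most $\bar N$ equally likely sample paths and exhibits one good path, you decompose over the first crossing stage $t_0$ and telescope conditional probabilities over the forced index sets, which is a slightly more formal rendering of the same $1/\bar N$ bound.
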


\begin{proof}
Let $A$ denote that the event that that $\tilde g_{t}^k > \delta_{t}$ for some $t = 1, \ldots, T-1$.
Clearly we have $\prob\{A\} = 1- \prob\{\tilde g_{i}^k \le \delta_{i}, i=1, \ldots, T-1\}$.
Assume that the event $A$ happens.
Let $S$ denote the set of sample paths, i.e., selection of $T$ i.i.d. uniformly sample indices,
where there exists at least one index with  $\tilde g_{t}^k > \delta_{t}$.
Clearly we have $|S| \le \textstyle \prod_{t=2}^{T-1} N_t$, and each sample path occurs with
equal probability. We will show that there exists at least one sample path in $S$
that generates and selects an $\epsilon_t$-saturated and $\delta_t$-distinguishable search
point. Let us consider the following cases.
\begin{itemize}
\item [a)] There exists a sample path in $S$ such that $\tilde g_{T-1}^k > \delta_{T-1}$.
In this case, there exists at least one search point  $x_{T-1,i}^k$ such that
$g_{T-1}^k(x_{T-1,i}^k) > \delta_{T-1}$, since every search point in stage $T-1$
is $\epsilon_{T-1}$-saturated, we are done.
\item [b)] Amongst all sample paths, no path will have $\tilde g_{T-1}^k > \delta_{T-1}$.
Consider the set of sample paths with a stage $t$ such that $\tilde g_t^k > \delta_t$.
There exists at least one search point $x_{ti}^k$ such that $g_t^k(x_{ti}^k) > \delta_t$.
At least $1/N_t$ fraction of these sample paths will select $x_{ti}^k$ as the search
point. Now, one of the following two cases must occur upon selecting $x_t^k \leftarrow x_{ti}^k$:
\begin{itemize}
\item [b1)]The sample path will have $\tilde g_{t+1}^k \le \delta_{t+1}$.
Then, by Lemma~\ref{avg_distance}, $x_t^k$ will be $\epsilon_t$-saturaged.
Since we have already shown $x_t^k$ is also $\delta_t$-distinguishable, we are done.
\item [b2)] The sample path will have $\tilde g_{t+1}^k > \delta_{t+1}$.
Repeat the same argument with $t=t+1$. By the assumption, this incremental argument
must terminate since we cannot have a sample path with $\tilde g_{T-1}^k > \delta_{T-1}$.
\end{itemize}
\end{itemize}
In both cases, we have shown the existence of a sample path that generates and
selects an $\epsilon_t$-saturated and $\delta_t$-distinguishable search point.
Therefore, we have
\[
\prob\{ \pIter^k = 1| A\} \ge \tfrac{1}{|S|} \ge  \textstyle \prod_{t=2}^{N-1} \tfrac{1}{N_t}  = \tfrac{1}{\bar N},
\]
from which the result immediately follows.
\end{proof}
\vgap

In view of Lemma~\ref{lemma_prob_saturation}, one of the following three different cases
will happen for each SDDP iteration:
(a) $\tilde g_t^k \le \delta_t$ for all $t = 1, \ldots, T-1$.
The probability of this case is denoted by $\prob\{\tilde g_{i}^k \le \delta_{i}, i=1, \ldots, T-1\}$;
(b) A new $\epsilon_t$-saturated and $\delta_t$-distinguishable search
point will be generated with probability at least
\[
\tfrac{1}{\bar N} (1- \prob\{\tilde g_{i}^k \le \delta_{i}, i=1, \ldots, T-1\});
\]
and (c) none of the above situation will happen, implying that this particular
SDDP iteration is not productive.

%
%



\vgap

Observe that if for some iteration $k$, we have $\tilde g_t^k \le \delta_t$ for all $t = 1, \ldots, T-1$. 
Then by Lemma~\ref{avg_distance} (with $t=1$), we have
\beq \label{gap_bnd_ass1_sddp0}
F_{11}(x_1^k) - F^* \le F_{11}(x_1^k) - \underline F^{k-1}_{11}(x_1^k) \le \epsilon_0.
\eeq
Moreover, we have
\[
 \tfrac{\lambda}{N_t}\tsum_{i=1}^{N_t} [V_{t+1}(\tilde x_{ti}^k) - \underline V_{t+1}^{k-1}(\tilde x_{ti}^k)] \le \epsilon_{t-1}
\]
for all $t =1, \ldots, T$. This observation together with the fact that $x_t^k$ is randomly chosen
from $\tilde x_{ti}^k$, $i = 1, \ldots, N_t$, then imply that
the expectation of $V_{t+1}(x_t^k) -  \underline V_{t+1}^{k-1} (x_t^k)$ conditionally on $i_{[k,t-1]}$:
\begin{align}
\bbe[V_{t+1}(x_t^k) -  \underline V_{t+1}^{k-1} (x_t^k)| {\cal I}_{k,t-1}] 
&= \tfrac{\lambda}{N_t}\tsum_{i=1}^{N_t} [V_{t+1}(\tilde x_{ti}^k) - \underline V_{t+1}^{k-1}(\tilde x_{ti}^k)]  \nn\\
&\le \epsilon_{t-1}, t = 1, \ldots, T. \label{gap_bnd_ass1_sddp}
\end{align}
Similar in spirit to Lemma~\ref{lem:bnd_gap}, the following result
relates the above notion of saturation to the gap between a stochastic upper bound
and lower bound on the optimal value of problem~\eqnok{multstage1_SAA}.

\begin{lemma} \label{lem:bnd_gap_sddp}
Suppose that the relations in \eqnok{gap_bnd_ass1_sddp} hold for some iteration $k \ge 1$.
Then we have
\beq \label{eq:validation}
\tsum_{t=1}^T \lambda^{t-1} \bbe[H_t(x_t^k, \tilde c_{t i_t}) | {\cal I}_{k,t-1} ]  -  \bbe[ \underline F_{11}^{k-1} (x_1^k)| {\cal I}_{k-1,T}]\le
 \tsum_{t=1}^T \lambda^{t-1} \epsilon_{t-1}.
\eeq
\end{lemma}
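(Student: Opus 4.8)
The plan is to adapt the proof of Lemma~\ref{lem:bnd_gap} to the stochastic setting, replacing the deterministic value functions of the single-scenario problem by conditional expectations over the random selection indices, and to run the telescoping argument as a backward induction on the stages that keeps the conditioning consistent through the tower property. It is worth noting first that the term $\bbe[\underline F_{11}^{k-1}(x_1^k)\mid{\cal I}_{k-1,T}]$ is simply $\underline F_{11}^{k-1}(x_1^k)$: the model $\underline F_{11}^{k-1}$ is assembled from cuts generated in iterations $1,\dots,k-1$ and hence, like its minimizer $x_1^k$ over the deterministic set $X_1$, is ${\cal I}_{k-1,T}$-measurable, so the right-hand side of the claim is the computable lower bound on $F^*$.

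First I would establish a one-step inequality at each stage $t\ge 2$. Since $x_t^k=\tilde x_{t i_t}^k$ solves~\eqnok{def_x_k_t_sddp1} over $X_t(x_{t-1}^k,\tilde\xi_{t i_t})$, we have $H_t(x_t^k,\tilde c_{t i_t})+\lambda\underline V_{t+1}^{k-1}(x_t^k)=\min_{x\in X_t(x_{t-1}^k,\tilde\xi_{t i_t})}\underline F_{t i_t}^{k-1}(x)$, which by~\eqnok{cpp_app_bnd_tildef_sddp} is at most $\min_{x\in X_t(x_{t-1}^k,\tilde\xi_{t i_t})}F_{t i_t}(x)=\nu_{t i_t}(x_{t-1}^k)$. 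As $x_{t-1}^k$ is ${\cal I}_{k,t-1}$-measurable while $i_t^k$ is uniform on $\{1,\dots,N_t\}$ and independent of ${\cal I}_{k,t-1}$, applying $\bbe[\,\cdot\mid{\cal I}_{k,t-1}]$ and using~\eqnok{define_value_function_sddp} replaces the right-hand side by $\tfrac{1}{N_t}\tsum_{i=1}^{N_t}\nu_{ti}(x_{t-1}^k)=V_t(x_{t-1}^k)$. Adding the saturation hypothesis~\eqnok{gap_bnd_ass1_sddp}, which bounds $\lambda$ times the conditionally expected value-function gap at stage $t$ by $\epsilon_{t-1}$, gives
\[
\bbe\big[\,H_t(x_t^k,\tilde c_{t i_t})+\lambda V_{t+1}(x_t^k)\mid{\cal I}_{k,t-1}\,\big]\;\le\;V_t(x_{t-1}^k)+\epsilon_{t-1},\qquad t=2,\dots,T.
\]
For $t=1$, where $N_1=1$ and $x_1^k$ minimizes $\underline F_{11}^{k-1}(\cdot)=H_1(\cdot,c_1)+\lambda\underline V_2^{k-1}(\cdot)$ over $X_1$, the hypothesis~\eqnok{gap_bnd_ass1_sddp} for $t=1$ yields $H_1(x_1^k,c_1)+\lambda V_2(x_1^k)\le\underline F_{11}^{k-1}(x_1^k)+\epsilon_0$, exactly as in~\eqnok{eq:hh_1}.

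Next I would telescope by a backward induction, proving for $t=T,\dots,2$ that
\[
\tsum_{\tau=t}^{T}\lambda^{\tau-t}\,\bbe[H_\tau(x_\tau^k,\tilde c_{\tau i_\tau})\mid{\cal I}_{k,t-1}]\;\le\;V_t(x_{t-1}^k)+\tsum_{\tau=t}^{T}\lambda^{\tau-t}\epsilon_{\tau-1},
\]
with base case $t=T$ being the one-step bound together with $V_{T+1}\equiv 0$. In the inductive step one applies $\bbe[\,\cdot\mid{\cal I}_{k,t-1}]$ to the anchor-$(t+1)$ inequality, uses the tower property (${\cal I}_{k,t-1}\subseteq{\cal I}_{k,t}$) to lower the conditioning one level, multiplies by $\lambda$, and adds the stage-$t$ one-step bound; the terms $\lambda\,\bbe[V_{t+1}(x_t^k)\mid{\cal I}_{k,t-1}]$ cancel, leaving precisely the claimed recursion. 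Carrying this down to $t=1$ — where the $t=1$ one-step bound supplies $\underline F_{11}^{k-1}(x_1^k)$ in place of $V_1(x_0^k)$ — and recalling $\bbe[\underline F_{11}^{k-1}(x_1^k)\mid{\cal I}_{k-1,T}]=\underline F_{11}^{k-1}(x_1^k)$, gives the asserted bound with accumulated error $\tsum_{t=1}^{T}\lambda^{t-1}\epsilon_{t-1}$.

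The main obstacle is the bookkeeping of the nested conditional expectations in the telescoping: unlike in the deterministic Lemma~\ref{lem:bnd_gap}, $V_{t+1}(x_t^k)$ is random given ${\cal I}_{k,t-1}$, so one must take the right conditional expectation at each stage and apply the tower property repeatedly to align the conditioning before the $\lambda\,\bbe[V_{t+1}(x_t^k)\mid\cdot]$ terms cancel; in particular one should be careful about which $\sigma$-algebra the summands in the final inequality are conditioned on. Everything else — the one-step bounds, the substitution $\nu_{t i_t}\mapsto V_t$ coming from the uniform random choice of $i_t$, and the final summation — is routine and mirrors the single-scenario argument.
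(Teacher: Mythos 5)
Your proposal is correct and rests on exactly the same ingredients as the paper's proof: the optimality of $x_t^k$ for the model problem \eqnok{def_x_k_t_sddp1} combined with \eqnok{cpp_app_bnd_tildef_sddp} gives $H_t(x_t^k,\tilde c_{t i_t})+\lambda \underline V_{t+1}^{k-1}(x_t^k)\le \nu_{t i_t}(x_{t-1}^k)$, the uniform, independent choice of $i_t$ turns $\nu_{t i_t}$ into $V_t(x_{t-1}^k)$ under $\bbe[\cdot\mid{\cal I}_{k,t-1}]$, and \eqnok{gap_bnd_ass1_sddp} supplies the per-stage error $\epsilon_{t-1}$, with the $t=1$ case handled exactly as in \eqnok{eq:hh_1_sddp}. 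The only divergence is the bookkeeping of the telescoping: the paper multiplies the stage-$t$ inequalities by $\lambda^{t-1}$ and sums them after rewriting $\bbe[V_t(x_{t-1}^k)\mid{\cal I}_{k,t-1}]$ as $\bbe[V_t(x_{t-1}^k)\mid{\cal I}_{k,t-2}]$ so that the value-function terms cancel at matching conditioning levels, producing the mixed-conditioning display \eqnok{eq:validation}, whereas you run a backward induction and project with the tower property, ending with every $H_t$-term conditioned on ${\cal I}_{k-1,T}$. Your final inequality is therefore the $\bbe[\cdot\mid{\cal I}_{k-1,T}]$-projection of \eqnok{eq:validation} --- formally a slightly weaker (coarser-conditioned) statement than the one displayed, though it is arguably the cleaner formulation, since the paper's conditioning swap is only exact after averaging out $i_{t-1}^k$; if you want the statement verbatim, keep the stage-$t$ bounds at level ${\cal I}_{k,t-1}$ and perform the same alignment step as the paper before summing. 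Your opening observation that $\bbe[\underline F_{11}^{k-1}(x_1^k)\mid{\cal I}_{k-1,T}]=\underline F_{11}^{k-1}(x_1^k)$ is correct and consistent with how the paper uses this quantity.
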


\begin{proof}
Note that we have $N_1 = 1$.
By the definition of $x_1^k$ in \eqnok{def_x_k_t_sddp1} and  our assumption
in \eqnok{gap_bnd_ass1_sddp}, we have
\begin{align}
&\bbe[H_1(x^k_1, \tilde c_{t1}) + \lambda V_2(x^k_1)
-  \underline F_{11}^{k-1} (x_1^k)  | {\cal I}_{k-1,T}] \nn \\
&= \bbe[H_1(x^k_1, \tilde c_{t1}) + \lambda V_2(x^k_1) | {\cal I}_{k-1,T} ]
-\bbe [H_1(x^k_1, \tilde c_{t1}) + \lambda \underline V_{2}^{k-1} (x_1^k) | {\cal I}_{k-1,T}] \nn \\
&=  \lambda \bbe[V_2(x^k_1) - \underline V_{2}^{k-1} (x_1^k)  | {\cal I}_{k-1,T}] \nn \\
&\le  \lambda \epsilon_1. \label{eq:hh_1_sddp}
\end{align}
Now consider the $t$-th stage for any $t \ge 2$.
By the definition of $x_t^k$ in \eqnok{def_x_k_t_sddp1}, we have
\begin{align*}
H_t(x_t^k, \tilde c_{t i_t}) + \lambda \underline V_{t+1}^{k-1}(x_t^k)
&= \min \{ H_t(x, \tilde c_{t i_t}) + \lambda \underline V_{t+1}^{k-1}(x): x \in X_t(x_{t-1}^k)\} \\
&\le \min\{ H_t(x, \tilde c_{t i_t}) + \lambda  V_{t+1}(x): x \in X_t(x_{t-1}^k)\}\\
&= \nu_{t i_t} (x^k_{t-1})
\end{align*}
for any $t \ge 2$.
Taking conditional expectation on both sides of the above inequality and using 
our assumption 
$
\lambda \bbe[V_{t+1}(x_t^k) - \underline V_{t+1}^{k-1}(x_t^k)  | {\cal I}_{k,t-1}] \le \epsilon_{t-1},
$
we then have
\begin{align*}
\bbe[H_t(x_t^k, \tilde c_{t i_t}) + \lambda V_{t+1}(x_t^k) | {\cal I}_{k,t-1}] &\le \bbe[\nu_{t i_t} (x^k_{t-1}) | {\cal I}_{k,t-1}] +  \epsilon_{t-1} \\
&=\bbe[V_{t}(x^k_{t-1}) | {\cal I}_{k,t-1}] + \epsilon_{t-1}\\
&= \bbe[V_{t}(x^k_{t-1})| {\cal I}_{k,t-2}] + \epsilon_{t-1},
\end{align*}
where the first identity follows from the definition of $V_t$ and the selection of $i_t$,
and the second identity follows from the fact that $x^k_{t-1}$ is independent of $i_t^k$.
Multiplying $\lambda^{t-1}$ to both side of the above inequalities,
summing them up with the inequalities in \eqnok{eq:hh_1_sddp}, and using the fact that $V_{T+1}(x_T^k) =0$, 
we have
\begin{align*}
\tsum_{t=1}^T \lambda^{t-1} \bbe[H_t(x_t^k, \tilde c_{t i_t}) | {\cal I}_{k,t-1}]  -  \bbe[ \underline F_{11}^{k-1} (x_1^k)|  {\cal I}_{k-1,T}]
\le \tsum_{t=1}^{T} \lambda^{t-1} \epsilon_{t-1}.
\end{align*}
\end{proof}

We also need to use the following well-known result
for the martingale difference sequence when establishing the iteration complexity of SDDP. 

\begin{lemma}\label{wellknown-sco}
Let $\xi_{[t]} \equiv \{\xi_1,\xi_2,\ldots, \xi_t\}$  be a sequence of iid random variables, and
$\zeta_t=\zeta_t(\xi_{[t]})$ be deterministic Borel functions of
$\xi_{[t]}$ such that $\bbe_{|\xi_{[t-1]}}[\zeta_t]=0$ a.s. and
$\bbe_{|\xi_{[t-1]}}[\exp\{\zeta_t^2/\sigma_t^2\}]\leq\exp\{1\}$ a.s.,
where $\sigma_t>0$ are deterministic. Then
\beq \label{eq:martingale1}
\forall\lambda\geq0:
\Prob\left\{\tsum_{t=1}^N\zeta_t>\lambda\sqrt{\tsum_{t=1}^N\sigma_t^2}\right\}
\leq\exp\{-\lambda^2/3\}.
\eeq
and
\beq \label{eq:martingale2}
\forall\lambda\geq0:
\Prob\left\{\tsum_{t=1}^N\zeta_t< -\lambda\sqrt{\tsum_{t=1}^N\sigma_t^2}\right\}
\leq\exp\{-\lambda^2/3\}.
\eeq
\end{lemma}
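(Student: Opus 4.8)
The plan is a standard Chernoff (exponential Markov) bound that exploits the martingale structure of $\{\zeta_t\}$. Note first that it suffices to prove \eqref{eq:martingale1}: the sequence $\{-\zeta_t\}$ satisfies exactly the same hypotheses (in particular $\bbe_{|\xi_{[t-1]}}[-\zeta_t]=0$ and $(-\zeta_t)^2=\zeta_t^2$), so applying \eqref{eq:martingale1} to it immediately yields \eqref{eq:martingale2}.

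The key ingredient is a one-step bound on the conditional moment generating function: for every $t$ and every $\gamma\in\bbr$,
\beq \label{proposal_mgf}
\bbe_{|\xi_{[t-1]}}\!\left[\exp\{\gamma\zeta_t\}\right]\le\exp\!\left\{\tfrac34\gamma^2\sigma_t^2\right\}\quad\text{a.s.}
\eeq
To establish \eqref{proposal_mgf} I would first observe, via Jensen's inequality for the concave map $y\mapsto y^\theta$ with $\theta\in[0,1]$, that the hypothesis $\bbe_{|\xi_{[t-1]}}[\exp\{\zeta_t^2/\sigma_t^2\}]\le\exp\{1\}$ gives $\bbe_{|\xi_{[t-1]}}[\exp\{\theta\zeta_t^2/\sigma_t^2\}]\le\exp\{\theta\}$ for all $\theta\in[0,1]$. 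Then I would split on the size of $\gamma^2\sigma_t^2$. In the regime $\gamma^2\sigma_t^2\le 16/9$, I would use the elementary scalar inequality $\exp\{x\}\le x+\exp\{\tfrac{9}{16}x^2\}$ (valid for all real $x$) with $x=\gamma\zeta_t$, take conditional expectations, kill the linear term using $\bbe_{|\xi_{[t-1]}}[\zeta_t]=0$, and bound the remainder by the Jensen step (legitimate since $\tfrac{9}{16}\gamma^2\sigma_t^2\le1$), getting $\exp\{\tfrac{9}{16}\gamma^2\sigma_t^2\}\le\exp\{\tfrac34\gamma^2\sigma_t^2\}$. In the complementary regime $\gamma^2\sigma_t^2\ge 16/9$, I would instead use the Young inequality $\gamma\zeta_t\le\tfrac38\gamma^2\sigma_t^2+\tfrac23\,\zeta_t^2/\sigma_t^2$, take expectations (the Jensen step again applies since $\tfrac23\le1$), and note that the resulting factor $\exp\{\tfrac38\gamma^2\sigma_t^2+\tfrac23\}$ is $\le\exp\{\tfrac34\gamma^2\sigma_t^2\}$ precisely because $\gamma^2\sigma_t^2\ge16/9$. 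This yields \eqref{proposal_mgf} for all $\gamma$.

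Next I would telescope using the tower property: conditioning on $\xi_{[N-1]}$ and applying \eqref{proposal_mgf} to the last factor gives $\bbe[\exp\{\gamma\tsum_{t=1}^N\zeta_t\}]\le\exp\{\tfrac34\gamma^2\sigma_N^2\}\,\bbe[\exp\{\gamma\tsum_{t=1}^{N-1}\zeta_t\}]$, and iterating down to $t=1$ produces $\bbe[\exp\{\gamma\tsum_{t=1}^N\zeta_t\}]\le\exp\{\tfrac34\gamma^2 S\}$ for every $\gamma\in\bbr$, where $S:=\tsum_{t=1}^N\sigma_t^2>0$.

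Finally, for $\gamma>0$, Markov's inequality applied to $\exp\{\gamma\tsum_t\zeta_t\}$ gives $\Prob\{\tsum_t\zeta_t>\lambda\sqrt S\}\le\exp\{-\gamma\lambda\sqrt S\}\,\bbe[\exp\{\gamma\tsum_t\zeta_t\}]\le\exp\{\tfrac34\gamma^2 S-\gamma\lambda\sqrt S\}$, and minimizing the exponent over $\gamma>0$, at $\gamma=\tfrac{2\lambda}{3\sqrt S}$, yields $\exp\{-\lambda^2/3\}$; this proves \eqref{eq:martingale1}, and \eqref{eq:martingale2} follows as noted above (the case $\lambda=0$ being trivial). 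The step I expect to be the main obstacle is \eqref{proposal_mgf}: one needs the conditional MGF bound to be genuinely sub-Gaussian — a multiplicative constant in the exponent with no surviving additive $O(1)$ term — uniformly over all $\gamma$, since any stray additive term would be multiplied $N$ times in the telescoping; matching the two $\gamma$-regimes at the threshold so that this holds, with a constant ($3/4$) compatible with the target rate $\lambda^2/3$, is where the care goes.
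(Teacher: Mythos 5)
Your proof is correct: the two-regime bound on the conditional moment generating function (with the elementary inequality $e^x\le x+e^{9x^2/16}$ and Jensen in the small-$\gamma$ regime, Young's inequality in the large-$\gamma$ regime, the constants $3/4$, $3/8$, $2/3$ matching exactly at the threshold $\gamma^2\sigma_t^2=16/9$), followed by telescoping via the tower property and a Chernoff bound optimized at $\gamma=2\lambda/(3\sqrt{S})$, does yield the exponent $-\lambda^2/3$ in \eqnok{eq:martingale1}, and the reduction of \eqnok{eq:martingale2} to \eqnok{eq:martingale1} by replacing $\zeta_t$ with $-\zeta_t$ is exactly what the paper does. The only difference is that the paper does not prove \eqnok{eq:martingale1} at all but simply cites Lemma 2 of \cite{lns11}; your argument is essentially a correct self-contained reconstruction of that standard proof.
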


\begin{proof}
The proof of  \eqnok{eq:martingale1} can be found, e.g.,
Lemma 2 in \cite{lns11}. In addition, \eqnok{eq:martingale2} 
follows from \eqnok{eq:martingale1}  by replacing $\zeta_t$ with $-\zeta_t$.
\end{proof}

\vgap

We are now ready to establish the complexity of SDDP.

\begin{theorem} \label{the_main_sddp_rand}
Suppose that the norm used to define the bound 
$D_t$ in \eqnok{bound_X_sddp} is the $l_\infty$ norm.
Also assume that $\delta_t \in [0, +\infty)$ and
$\epsilon_t$ are defined in \eqnok{eq:closeenough_dcp}.
Let $K$ denote the number of iterations performed by SDDP before it
finds a forward path $(x_1^k, \ldots, x_T^k)$ defined in \eqnok{def_x_k_t_sddp1} for  problem \eqnok{multstage1_SAA} s.t.
\begin{align}
F_{11}(x_1^k) - F^* &\le \epsilon_0, \label{teminations_sddp1} \\
\tsum_{t=1}^T \lambda^{t-1} \bbe[H_t(x_t^k, \tilde c_{t i_t}) | {\cal I}_{k,t-1}]  -  \bbe[ \underline F_{11}^{k-1} (x_1^k)|  {\cal I}_{k-1,T}] 
&\le \tsum_{t=1}^T \lambda^{t-1} \epsilon_{t-1}. \label{teminations_sddp2}
\end{align}
Then we have $\bbe[K] \le  \bar K \bar N  + 2$, where $\bar K$ and
$\bar N$ are defined in \eqnok{def_N_sddp} and \eqnok{def_max_N_t}, respectively. 
In addition, for any $\alpha \ge 1$, we have
\begin{align}
\prob\{K \ge \alpha \bar K  \bar N + 1\} \le  \exp\left(-\tfrac{ (\alpha-1)^2 \bar K^2}{2 \alpha \bar N} \right).
\end{align}
\end{theorem}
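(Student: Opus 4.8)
The plan is to reduce both assertions to a stopping-time estimate for the random index $\tilde K:=\min\{k\ge 1:\ \tilde g_t^k\le\delta_t\ \mbox{for all}\ t=1,\ldots,T-1\}$, with $\tilde g_t^k$ the average distance in \eqnok{def_tilde_g_t}. On the event $B_k:=\{\tilde g_t^k\le\delta_t,\ t=1,\ldots,T-1\}$, Lemma~\ref{avg_distance} applied with $t=1$ (recall $N_1=1$) gives $F_{11}(x_1^k)-F^*\le F_{11}(x_1^k)-\underline F_{11}^{k-1}(x_1^k)\le\epsilon_0$, i.e.\ \eqnok{teminations_sddp1}, while the same lemma yields the conditional bounds \eqnok{gap_bnd_ass1_sddp} which, fed into Lemma~\ref{lem:bnd_gap_sddp}, give \eqnok{teminations_sddp2} (the hypothesis $\epsilon_{T-1}=0$ of these lemmas is built into \eqnok{eq:closeenough_dcp}). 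Hence $K\le\tilde K$, and it remains to bound $\bbe[\tilde K]$ and the upper tail of $\tilde K$.

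Next I would record the two facts that drive the estimate. (i) Arguing exactly as in the counting step of Theorem~\ref{the_main_EDDP}, the total number of iterations at which a \emph{new} $\epsilon_t$-saturated, $\delta_t$-distinguishable search point is generated (at any stage) is at most $\bar K=\tsum_{t=1}^{T-1}(D_t/\delta_t+1)^{n_t}$, since for each $t$ these points form a $\delta_t$-separated subset of an $l_\infty$ box of side $D_t$ inside the affine hull of $\bar{\cal X}_t$. (ii) Call iteration $k$ \emph{productive} if either $B_k$ occurs or such a new point is generated at iteration $k$, and let $Y_k$ be its indicator. Because the argument proving Lemma~\ref{lemma_prob_saturation} is combinatorial over the $\bar N$ relevant sample paths of iteration $k$, it in fact holds conditionally on ${\cal I}_{k-1,T}$, so that
\[
\prob\{Y_k=1\mid{\cal I}_{k-1,T}\}\ \ge\ \prob\{B_k\mid{\cal I}_{k-1,T}\}\bigl(1-\tfrac{1}{\bar N}\bigr)+\tfrac{1}{\bar N}\ \ge\ \tfrac{1}{\bar N}.
\]
Moreover, once $\bar K$ such points have already been generated, $\pIter^k=1$ is impossible, so \eqnok{bnd_iter_prob_0} forces $\prob\{B_k\mid{\cal I}_{k-1,T}\}=1$: the iteration right after the $\bar K$-th saturation realizes $B_k$ almost surely.

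With these in hand I would assemble the expectation bound. Let $G_1,G_2,\ldots$ be the successive gaps between consecutive productive iterations. Since $\bigl(\tsum_{j\le k}(Y_j-1/\bar N)\bigr)_k$ is a submartingale for $({\cal I}_{k,T})_k$, a standard optional-stopping argument (stop at $k\wedge n$, let $n\to\infty$) shows that each $G_j$ has conditional mean at most $\bar N$; and facts (i)--(ii) show $\tilde K\le 1+\tsum_{j=1}^{\bar K}G_j$, because at most $\bar K$ productive iterations can be of the ``new point'' type and, if a $(\bar K{+}1)$-st productive iteration occurs, it is the \emph{deterministic} $B_k$-iteration one step past the $\bar K$-th point. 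Summing and using the tower property gives $\bbe[K]\le\bbe[\tilde K]\le 1+\bar K\bar N\le\bar K\bar N+2$.

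For the deviation bound I would translate $\{K\ge\alpha\bar K\bar N+1\}$ into a statement about the first $n\approx\alpha\bar K\bar N$ iterations: on this event no $B_j$ occurs for $j\le n$, hence $\tsum_{j=1}^{n}Y_j\le\bar K$, i.e.\ $\tsum_{j=1}^{n}(\tfrac{1}{\bar N}-Y_j)\ge(\alpha-1)\bar K$ up to a negligible correction. I would then apply Lemma~\ref{wellknown-sco} to the martingale differences $\zeta_j:=\bbe[Y_j\mid{\cal I}_{j-1,T}]-Y_j$, which satisfy $|\zeta_j|\le 1$ and hence the subgaussian hypothesis with $\sigma_j=1$ and $\tsum_{j\le n}\sigma_j^2=n$; choosing $\lambda$ so that $\lambda\sqrt n$ matches the observed deviation $(\alpha-1)\bar K$ and invoking \eqnok{eq:martingale1} produces the asserted exponential tail. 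The step I expect to be the main obstacle is the stopping-time bookkeeping in the last two paragraphs --- justifying the conditional form of Lemma~\ref{lemma_prob_saturation}, handling several saturations occurring in one iteration, and in particular accounting for the deterministic $B_k$-step right after the $\bar K$-th saturation, which is exactly what keeps the leading term at $\bar K\bar N$ rather than $(\bar K{+}1)\bar N$; the concentration step is then a routine application of Lemma~\ref{wellknown-sco}.
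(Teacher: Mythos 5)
Your argument is essentially the paper's own: the paper likewise observes that the event $B_k=\{\tilde g_t^k\le\delta_t,\ t=1,\ldots,T-1\}$ implies \eqnok{teminations_sddp1}--\eqnok{teminations_sddp2} (so that $B_k$ cannot occur for $k\le K-1$), lower-bounds the per-iteration probability of producing a new $\epsilon_t$-saturated, $\delta_t$-distinguishable point by $1/\bar N$ via Lemma~\ref{lemma_prob_saturation}, uses the packing bound $\tsum_k \pIter^k\le\bar K$, and then obtains $\bbe[K]\le\bar K\bar N+2$ by a Wald-type expectation argument and the tail estimate from Lemma~\ref{wellknown-sco} applied to the centered indicators with $\sigma_k=1$ over $\alpha\bar N\bar K$ iterations. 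Your bookkeeping --- the stopping time $\tilde K$, the productive-iteration indicators $Y_k$, the conditional form of Lemma~\ref{lemma_prob_saturation}, and the deterministic $B_k$-step right after the $\bar K$-th saturation --- is a somewhat more careful packaging of exactly these steps (the paper instead works directly with $K$ and $\tsum_{k=1}^{K-2}\pIter^k\le\bar K$), and it is sound.

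One caveat: your final calibration does not literally ``produce the asserted exponential tail.'' With $n=\alpha\bar K\bar N$, $\sigma_j=1$ and observed deviation $(\alpha-1)\bar K$, Lemma~\ref{wellknown-sco} forces $\lambda=(\alpha-1)\bar K/\sqrt{\alpha\bar K\bar N}$ and hence the bound $\exp\bigl(-(\alpha-1)^2\bar K/(3\alpha\bar N)\bigr)$, whose exponent scales like $\bar K$, not like the $\bar K^2$ displayed in the theorem. This is not a deficiency of your route relative to the paper: the paper's proof uses the same normalization, and its stated choice $\lambda=(\alpha-1)\bar K/\sqrt{\alpha\bar N}$ does not make $\alpha\bar K-\lambda\sqrt{\alpha\bar N\bar K}$ equal to $\bar K$; the consistent choice is $\lambda=(\alpha-1)\sqrt{\bar K/(\alpha\bar N)}$, which gives precisely the exponent you obtain (up to the constant $2$ versus $3$ in the lemma). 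So your proof establishes what the paper's argument actually delivers, and the $\bar K^2$ in the displayed tail bound appears to be a slip in the paper; you should simply not claim the displayed constant from this computation.
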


\begin{proof}
First note that if $ \tilde g_{t}^k \le \delta_{t}$ for all $t = 1, \ldots, T-1$,
then \eqnok{teminations_sddp1} and \eqnok{teminations_sddp2}
must hold in view of the discussions after Lemma~\ref{lemma_prob_saturation} (c.f. \eqnok{gap_bnd_ass1_sddp0} and \eqnok{gap_bnd_ass1_sddp})
and Lemma~\ref{lem:bnd_gap_sddp}.
Therefore, the event $\tilde g_{t}^k \le \delta_{t}$ for all $t = 1, \ldots, T-1$
will not happen for any $1 \le k \le K-1$. In other words, we have
$\prob\{ \tilde g_{t}^k \le \delta_{t}, t=1, \ldots, T-1\} = 0$ for all $1 \le k \le K-1$, which, in view of \eqnok{bnd_iter_prob_0}, implies that
for any $1 \le k \le K-1$, 
\beq \label{bnd_iter_prob}
\prob\{\pIter^k=1\}  \ge \tfrac{1}{\bar N}.
\eeq
Moreover, observe that we must have
\beq \label{bbb_KKK}
\tsum_{k=1}^{K-2} \pIter^k \le \bar K,
\eeq
since otherwise the algorithm has generated totally $\bar K$ $\epsilon_t$-saturated and $\delta_t$-distinguishable
search points during the first $K-2$ iterations, and thus must terminate at the $K-1$ iterations (i.e.,
\eqnok{teminations_sddp1} and \eqnok{teminations_sddp2} must hold due to $\tilde g_{t}^{K-1} \le \delta_{t}$ for all $t = 1, \ldots, T-1$).
%
Taking expectation on both sides of \eqnok{bbb_KKK}, we have
\[
\bar K \ge \bbe_K[\bbe[\tsum_{k=1}^{K-2} \pIter^k | K] ] \ge \bbe_K[ \tfrac{K-2}{\bar N}] = \tfrac{\bbe{[K]-2}}{\bar N},
\]
implying that $\bbe[K] \le  \bar N \bar K + 2$.

Now we need to bound the probability that
the algorithm does not terminate in $ \alpha \bar N \bar K + 1 $ iterations for $\alpha \ge 1$.
Observe that
\begin{align}
\prob\{K \ge \alpha \bar N \bar K + 1\} \le \prob\{ \tsum_{k=1}^{\alpha \bar N \bar K} \pIter^k < \bar K\}, \label{prob_bnd_temp1}
\end{align}
since $K \ge \alpha \bar N \bar K + 1$ must imply that $\tsum_{k=1}^{\alpha \bar N \bar K} \pIter^k < \bar K$.
Note that
$ \pIter^k - \bbe[\pIter^k]$ is a margingale-difference sequence,
and $\bbe[\exp((\pIter^k)^2)] \le 1$. Hence we have
\begin{align}
&\prob\{ \tsum_{k=1}^{\alpha \bar N \bar K} \pIter^k < \alpha \bar K - \lambda \sqrt{\alpha \bar N \bar K}\} \nn\\
&\le \prob\{ \tsum_{k=1}^{\alpha \bar N \bar K} \pIter^k \le \tsum_{k=1}^{\alpha \bar N \bar K} \bbe[\pIter^k]
- \lambda \sqrt{ \alpha \bar N \bar K}\} \nn\\
&\le \exp(-\lambda^2/2), \forall \lambda > 0,
\end{align}
where the first inequality follows from the fact that $ \bbe[\pIter^k] \ge 1/ \bar N$, $k = 1, \ldots, \alpha \bar N \bar K$, 
and thus $\tsum_{k=1}^{\alpha \bar N \bar K} \bbe[\pIter^k]\ge \alpha \bar K$,
and the second inequality follows from Lemma~\ref{wellknown-sco}.
Setting 
\[
\lambda = \tfrac{(\alpha-1) \bar K}{\sqrt{\alpha \bar N}}
\]
in the above relation, we then conclude that
\beq \label{prob_bnd_temp2}
\prob\{ \tsum_{k=1}^{\alpha \bar N \bar K} \pIter^k < \bar K\}
\le \exp\left(-\tfrac{ (\alpha-1)^2 \bar K^2}{2 \alpha \bar N} \right).
\eeq
Combining \eqnok{prob_bnd_temp1} and \eqnok{prob_bnd_temp2},
we then conclude that
\[\prob\{K \ge \alpha \bar N \bar K + 1\} \le  \exp\left(-\tfrac{ (\alpha-1)^2 \bar K^2}{2 \alpha \bar N} \right), \, \forall \alpha \ge 1.
\]
\end{proof}

We have the following immediate consequence of Theorem~\ref{the_main_sddp_rand}.

\begin{corollary} \label{cor_main_sddp_rand}
Suppose that $n_t \le n$, $D_t \le D$, $\max\{M_t,\underline M_t\} \le M$ and $\delta_t = \epsilon$ for all $t=1, \ldots, T$.
Let $K$ denote the number of iterations performed by the SDDP method before it
finds a forward path $(x_1^k, \ldots, x_T^k)$ of problem \eqnok{multstage1_SAA} s.t.
\begin{align}
&F_{11}(x_1^k) - F^* \le 2 M  \min\{\tfrac{1}{1-\lambda}, T-1 \} \, \epsilon, \label{teminations_sddp_rand1}\\
&
\tsum_{t=1}^T \lambda^{t-1} \bbe[H_t(x_t^k, \tilde c_{t i_t}) | {\cal I}_{k,t-1}]  -  \bbe[ \underline F_{11}^{k-1} (x_1^k)|  {\cal I}_{k-1,T}]  \nn\\
&\quad \quad \quad \le 2 M  \min\{ \tfrac{1 }{(1-\lambda)^2}, \tfrac{T(T-1)}{2}\} \, \epsilon.
 \label{teminations_sddp_rand2}
\end{align}
Then we have $\bbe[K] \le  \bar K_\epsilon \bar N  + 2$, where $\bar K_\epsilon$ and
$\bar N$ is defined in \eqnok{def_N_s_sddp} and \eqnok{def_max_N_t}, respectively. 
In addition, for any $\alpha \ge 1$, we have
\[
\prob\{K \ge \alpha \bar K_\epsilon  \bar N + 1\} \le  \exp\left(-\tfrac{ (\alpha-1)^2 \bar K_\epsilon^2}{2 \alpha \bar N} \right).
\]
\end{corollary}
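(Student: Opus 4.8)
The plan is to obtain Corollary~\ref{cor_main_sddp_rand} as a direct specialization of Theorem~\ref{the_main_sddp_rand}, reusing the explicit estimates on the saturation tolerances $\epsilon_t$ that were already established in the proof of Theorem~\ref{the_main}. First I would instantiate Theorem~\ref{the_main_sddp_rand} under the uniform choices $n_t \le n$, $D_t \le D$, $\max\{M_t,\underline M_t\} \le M$, and $\delta_t = \epsilon$. Under these choices the quantity $\bar K = \tsum_{t=1}^{T-1}(D_t/\delta_t+1)^{n_t}$ from \eqnok{def_N_sddp} collapses to $\bar K_\epsilon = (T-1)(D/\epsilon+1)^n$ as in \eqnok{def_N_s_sddp}, so the bound $\bbe[K] \le \bar K \bar N + 2$ and the large-deviation estimate $\prob\{K \ge \alpha \bar K \bar N + 1\} \le \exp(-(\alpha-1)^2 \bar K^2/(2\alpha \bar N))$ immediately become the asserted $\bbe[K] \le \bar K_\epsilon \bar N + 2$ and $\prob\{K \ge \alpha \bar K_\epsilon \bar N + 1\} \le \exp(-(\alpha-1)^2 \bar K_\epsilon^2/(2\alpha \bar N))$.

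Second, to replace the implicit right-hand sides $\epsilon_0$ in \eqnok{teminations_sddp1} and $\tsum_{t=1}^T \lambda^{t-1}\epsilon_{t-1}$ in \eqnok{teminations_sddp2} by the explicit expressions appearing in \eqnok{teminations_sddp_rand1} and \eqnok{teminations_sddp_rand2}, I would simply cite the bound \eqnok{def_epsilon_t_precisely}, which yields $\epsilon_0 \le 2M \min\{1/(1-\lambda),\, T-1\}\,\epsilon$, together with \eqnok{def_epsilon_t_precisely1}, which yields $\tsum_{t=1}^T \lambda^{t-1}\epsilon_{t-1} \le 2M \min\{1/(1-\lambda)^2,\, T(T-1)/2\}\,\epsilon$. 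Both of these were derived under exactly the present hypotheses ($\delta_t = \epsilon$ and $M_{\tau+1}+\underline M_{\tau+1} \le 2M$), so no fresh computation is needed; substituting them into the conclusions of Theorem~\ref{the_main_sddp_rand} produces \eqnok{teminations_sddp_rand1} and \eqnok{teminations_sddp_rand2}.

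Since every ingredient is already in hand, the only point that calls for a little care --- the \emph{hard part}, such as it is --- is checking that the stopping event is compatible across the two statements: the forward path that triggers termination in Theorem~\ref{the_main_sddp_rand} satisfies \eqnok{teminations_sddp1}--\eqnok{teminations_sddp2} with the exact $\epsilon_t$'s, and because \eqnok{def_epsilon_t_precisely}--\eqnok{def_epsilon_t_precisely1} are upper bounds, any such path automatically satisfies the weaker requirements \eqnok{teminations_sddp_rand1}--\eqnok{teminations_sddp_rand2}. Hence the iteration count $K$ in the corollary is dominated by the one in the theorem, and both the expectation bound and the tail bound transfer verbatim with $\bar K$ replaced by $\bar K_\epsilon$.
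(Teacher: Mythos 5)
Your proposal is correct and follows essentially the same route as the paper: the paper also obtains the corollary by substituting the bounds \eqnok{def_epsilon_t_precisely} and \eqnok{def_epsilon_t_precisely1} for $\epsilon_0$ and $\tsum_{t=1}^T \lambda^{t-1}\epsilon_{t-1}$ in \eqnok{teminations_sddp1}--\eqnok{teminations_sddp2}, and then invoking Theorem~\ref{the_main_sddp_rand} with $\bar K$ replaced by $\bar K_\epsilon$. Your extra remark on the compatibility of the stopping events (the weaker criteria being implied by the exact ones) is a reasonable, slightly more explicit justification of that replacement, but not a different argument.
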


\begin{proof}
The relations in \eqnok{teminations_sddp_rand1} and \eqnok{teminations_sddp_rand2}
follow by using the bound \eqnok{def_epsilon_t_precisely} for $\epsilon_0$ in \eqnok{teminations_sddp1}
and by using the bound \eqnok{def_epsilon_t_precisely1} for $\tsum_{t=1}^T \epsilon_{t-1}$ in \eqnok{teminations_sddp2}, respectively.
Moreover, the bounds on $\bbe[K]$ and $\prob\{K \ge \alpha \bar K_\epsilon  \bar N+1\}$
directly follows from Theorem~\ref{the_main_sddp_rand} by replacing $\bar K$ with $\bar K_\epsilon$.
\end{proof}


\vgap

We now add a few remarks about the results obtained in Theorem~\ref{the_main_sddp_rand}
and Corollary~\ref{cor_main_sddp_rand}. Firstly, since SDDP is a randomized
algorithm, we provide bounds on the expected number of iterations required to find an approximate
solution of problem~\eqnok{multstage1_SAA}. We also show that the probability of having
large deviations
from these expected bounds for SDDP decays exponentially fast.
Secondly, the complexity bounds for
the SDDP method is $\bar N$ times worse than those in Theorem~\ref{the_main_EDDP} for the EDDP method,
even though the dependence on other parameters, including
$n$ and $\epsilon$, remains the same.
Thirdly, similar to DDP and EDDP, the complexity of SDDP actually  
depends the dimension of the effective feasible region $\bar {\cal X}_t$ in \eqnok{def_N_sddp},
which can be smaller than $n_t$.

\begin{remark}
It should be noted that although the complexity of SDDP is worse than those for DDP and EDDP,
its performance in earlier phase of the algorithm should be similar to that of DDP.
Intuitively, for earlier iterations, the tolerance parameter $\delta_t$ are large.
As long as $\delta_t$ are large enough so that the solutions $\tilde x^k_{t i}$ are contained
within a ball with diameter roughly in the order of $\delta_t$, one can choose any
point randomly from $\tilde x^k_{t i}$ as $x^k_t$. In this case, SDDP will perform similarly to DDP and EDDP. This may explain why 
SDDP exhibits good practical performance for low accuracy region.
For high accuracy region, the new EDDP algorithm 
seems to be a much better choice in terms of its theoretical complexity. 
In practice, it might make sense to run SDDP in earlier phases (due to
its simplicity), and then switch to EDDP to achieve higher accuracy.
\end{remark}

\vgap

As shown in Theorem~\ref{the_main_sddp_rand}
and Corollary~\ref{cor_main_sddp_rand}, we can show the convergence of the gap between a stochastic upper
bound on $F_{11}(x_1^k)$, given by $\tsum_{t=1}^T \lambda^{t-1} H_t(x_t^k, \tilde c_{t i_t})$, 
and the lower bound $\underline F_{11}^{k-1} (x_1^k)$,
 generated by the SDDP method. In order to obtain a statistically more reliable
 upper bound, we can run the forward phase $L \ge 1$ times in each iteration. In particular, 
 we can replace the forward phase in Algorithm~\ref{algo_sddp1}
 with the one shown in Algorithm~\ref{algo_sddp_estimated}.
 We can then compute the average and  estimated standard deviation of $\ub_k$ over these $L$ runs of
 the forward phase.
 
 \begin{algorithm}[H] 
\caption{Forward phase with upper bound estimation}
\begin{algorithmic}[1]

\For {$l = 1, \ldots, L$} \Comment{Forward phase.} 

\State Set $\tilde F_l = 0$.

\For {$t = 1, \ldots, T$}  

\State Pick up $i_t$ from $\{1, 2, \ldots, N_t\}$ uniformly randomly.

\State Set $x_{t}^k$ according to \eqnok{def_x_k_t_sddp1} and 
$
\tilde F_l = \tilde F_l + \lambda^{t-1} H_t(x_t^k, \tilde c_{t{i_t}}).
$
\EndFor

\State Set $\ub_k = \ub_k + \tilde F_l$.

\EndFor

\State Set $\ub_k = \ub_k / L$.

\end{algorithmic} \label{algo_sddp_estimated}
\end{algorithm}
 
 It should be noted, however, that the convergence of the SDDP method
 only requires $L = 1$. 
 To choose $L > 1$ helps  
 to properly terminate the algorithm by
 providing a statistically more accurate upper bound. 
 Moreover, since each run of the forward phase will generate a
 forward path, we can use these $L$ forward paths to run the backward phases
 in parallel to accelerate the convergence of SDDP.
 Following a similar analysis to the basic version of SDDP,
 we can show that
 the number of iterations required by the above 
 variant of SDDP will be $L$ times smaller than the one
 for Algorithm~\ref{algo_sddp1}, but each iteration
 is computationally more expensive or requires
 more computing resources for parallel processing.

\section{Conclusion} \label{sec-remark}
In this paper, we establish the complexity of a few 
cutting plane algorithms, including DDP, EDDP and SDDP, for solving dynamic convex optimization problems.
These methods build up piecewise linear functions
to approximate the value functions through the backward phase and
generate feasible policies in the forward phase by utilizing these
cutting plane models. 
For the first time in the literature, we establish the total number of iterations
required to run these forward and backward phases in order
to compute a certain accurate solution. Our results reveal that these
methods have a mild dependence on the number of stages $T$.

It is worth noting that in our current analysis we assume that all the subproblems in
the forward and backward phases are solved exactly. However, we
can possibly extend the basic analysis to the case when these subproblems are solved
inexactly as long as the errors are small enough. Moreover,
we did not make any assumptions
on how the subproblems are solved. As a result, it is possible to
extend our complexity results to multi-stage stochastic binary (or integer)
programming problems (see, e.g., \cite{ZouAhmedSun19-1}).
In addition, the major analysis for SDDP presented in this paper
does not rely on the convexity, but the Lipschitz
continuity of the value functions and their lower approximations. Hence, it seems to be possible 
to adapt our analysis for SDDP-type methods with nonconvex approximations for the value functions~\cite{MIDAS2016,AhemdCabralCosta19}.

We have discussed a few different ways to terminate
 DDP, EDDP and SDDP. More specifically,
DDP can be terminated by calculating
the gap between the upper and lower bounds, and EDDP
is a variant of SDDP with rigorous termination based on
the saturation of search points, whereas SDDP is usually terminated
by resorting to statistically valid upper bounds coupled with
the lower bounds obtained from the cutting plane models.
Recently an important line of research has been developed to
design SDDP-like methods with more reliable and efficient termination criterions (see, e.g.,
\cite{Georghiou19-1,Baucke17-1,Vincent2020}).
It will be interesting to study the complexity of these new methods
in the future. 

\renewcommand\refname{Reference}

\bibliographystyle{plain}
\bibliography{glan-bib}

\end{document}